\setlist{labelindent=1pt,itemsep=.5em}
\setlist[itemize]{leftmargin=1.2cm}
\setlist[enumerate]{itemindent=0em,leftmargin=1.2cm}
\setlist[enumerate,1]{label={\upshape(\roman*)}}
\newcommand{\subjclass}[2][2020]{%
  \let\@oldtitle\@title%
  \gdef\@title{\@oldtitle\footnotetext{#1 \emph{Mathematics subject classification}: #2}}%
}
\newcommand{\keywords}[1]{%
  \let\@@oldtitle\@title%
  \gdef\@title{\@@oldtitle\footnotetext{\emph{Keywords}: #1}}%
}
\def\sw{\swarrow}
\def\nw{\nwarrow}
\def\ne{\nearrow}
\def\se{\searrow}
\newtheorem{thm}{Theorem}[section]
\newtheorem{prop}{Proposition}[section]
\newtheorem{lem}{Lemma}[section]
\newtheorem{rem}{Remark}[section]
\newtheorem{cor}{Corollary}[section]
\newtheorem{defn}{Definition}[section]
\newtheorem{ex}{Example}[section]
\numberwithin{equation}{section}
\newcommand{\las}{(\!(}
\newcommand{\ras}{)\!)}
\date{}
\title{Hom-pre-Malcev and Hom-M-Dendriform algebras }
\author{Fattoum Harrathi$^{1}$, Sami Mabrouk$^{2}$, Othmen Ncib$^{2}$, Sergei Silvestrov$^{3}$\footnote{{\it Corresponding author}: Sergei Silvestrov}\\
\normalsize{
$^{1}$University of Sfax, Faculty of Sciences Sfax,  BP 1171, 3038 Sfax, Tunisia. \authorcr
harrathifattoum285@gmail.com   \authorcr \vspace{0,2cm}
$^{2}$University of Gafsa, Faculty of Sciences Gafsa, 2112 Gafsa, Tunisia.\authorcr
mabrouksami00@yahoo.fr, othmenncib@yahoo.fr \authorcr \vspace{0,2cm}
$^3$M\"{a}lardalen University,
Division of Mathematics and Physics, \authorcr
School of Education, Culture and Communication, \authorcr
Box 883, 72123 V\"{a}ster{\aa}s, Sweden. \authorcr
sergei.silvestrov@mdh.se}}
\begin{document}
\maketitle

\abstract{The main feature of Hom-algebras is that the identities defining
the structures are twisted by linear maps. The purpose of this paper is to introduce and study a Hom-type generalization of
pre-Malcev algebras and M-dendriform  algebras, called Hom-pre-Malcev algebras and Hom-M-dendriform algebras. We also introduce the notion of $\mathcal{O}$-operators
of Hom-Malcev and Hom-pre-Malcev algebras and
show the connections between Hom-Malcev, Hom-pre-Malcev and Hom-M-dendriform algebras using $\mathcal{O}$-operators. Hom-pre-Malcev algebras and Hom-M-dendriform algebras generalize Hom-pre-Lie algebras and Hom-L-dendriform algebras
respectively to the alternative setting and fit into a bigger
framework with a close relationship with Hom-pre-alternative algebras and Hom-alternative quadri-algebras respectively. \\*[0,3cm]
\noindent\textbf{Keywords}: Hom-pre-Malcev algebra, Hom-M-dendriform algebra, Rota-Baxter operator \\
\noindent\textbf{2020 MSC}: 17D30, 17B61, 17D10, 17A01, 17A30, 17B10}


\newpage

\tableofcontents

\numberwithin{equation}{section}
\section{Introduction}
The notion of dendriform algebras was introduced in 1995 by J.-L. Loday \cite{Loday01} in his study of algebraic $K$-theory. Dendriform algebras
are algebras with two operations, which dichotomize the notion of associative algebras. Later the notion of tridendriform algebra were
introduced by Loday and Ronco in their study of polytopes and Koszul duality (see \cite{LodayRonco04}).
In order to determine the algebraic structures behind a pair of commuting Rota-Baxter operators (on an associative algebra), Aguiar and Loday introduced the notion of quadri-algebra in \cite{AL}.

Malcev algebras were introduced in 1955 by A. I. Malcev \cite{Malcev55:anltcloops}, who called these objects Moufang-Lie algebras because of the connection with analytic Moufang loops. A Malcev algebra is a non-associative algebra $A$ with an anti-symmetric multiplication $[-,-]$ that satisfies, for all $x,y,z \in A$, the Malcev identity
\begin{equation}
\label{malcev}
J(x,y,[x,z]) = [J(x,y,z),x],
\end{equation}
where $J(x,y,z) = [[x,y],z] + [[z,x],y] + [[y,z],x]$ is the Jacobian.  In particular, Lie algebras are examples of Malcev algebras. Malcev algebras play an important role in Physics and the geometry of smooth loops. Just as the tangent
algebra of a Lie group is a Lie algebra, the tangent algebra of a locally analytic Moufang
loop is a Malcev algebra \cite{kerdman,kuzmin68:Malcevalgrepr,kuzmin71:conectMalcevanMoufangloops,Malcev55:anltcloops,nagy,sabinin}, see also \cite{gt,myung,okubo} for discussions about connections with physics.
The notion of pre-Malcev algebra as a Malcev algebraic analogue of a pre-Lie algebra was introduced in \cite{Madariaga}.
A pre-Malcev algebra is a vector space $A$ with a bilinear
multiplication $''\cdot'' $ such that the product $[x,y] = x\cdot y-y \cdot x$ endows
$A$ with the structure of a Malcev algebra, and the left multiplication operator
$L_{\cdot}(x) : y \mapsto x \cdot y$ define a representation of this Malcev algebra on $A$.
In other words, the product  $x \cdot y$ satisfies  the following identities for all $x, y, z, t \in A$:
\begin{equation}\label{PM}
[y,z] \cdot (x \cdot t)+ [[x,y],z] \cdot t+ y \cdot ([x,z] \cdot t)- x \cdot (y \cdot (z \cdot t)) + z \cdot (x \cdot (y \cdot t)).
\end{equation}

In \cite{Madariaga}, in order to find a dendriform algebra whose anti-commutator is a pre-Malcev
algebra, M-dendriform algebras were introduced, an $\mathcal{O}$-operator (specially a Rota-Baxter operator of weight zero) on a pre-Malcev algebra or two commuting Rota-Baxter operators on a Malcev algebra were shown to give a M-dendriform algebra, and also the relationships between M-dendriform algebras and Loday algebras especially quadri-algebras was established.

The theory of Hom-algebras has been initiated in \cite{HartwigLarSil:defLiesigmaderiv, LarssonSilvJA2005:QuasiHomLieCentExt2cocyid,LarssonSilv:quasiLiealg} from Hom-Lie algebras, quasi-Hom-Lie algebras and quasi-Lie algebras, motivated by quasi-deformations of Lie algebras of vector fields, in particular q-deformations of Witt and Virasoro algebras. Hom-Lie algebras and more general quasi-Hom-Lie algebras were introduced first by Hartwig, Larsson and Silvestrov in  \cite{HartwigLarSil:defLiesigmaderiv} where a general approach to discretization of Lie algebras of vector fields using general twisted derivations ($\sigma$-deriva\-tions) and a general method for construction of deformations of Witt and Virasoro type algebras based on twisted derivations have been developed. The general quasi-Lie algebras, containing the quasi-Hom-Lie algebras and Hom-Lie algebras as subclasses, as well their graded color generalization, the color quasi-Lie algebras including color quasi-hom-Lie algebras, color hom-Lie algebras and their special subclasses the quasi-Hom-Lie superalgebras and hom-Lie superalgebras, have been first introduced in \cite{HartwigLarSil:defLiesigmaderiv,LarssonSilvJA2005:QuasiHomLieCentExt2cocyid,LarssonSilv:quasiLiealg,LSGradedquasiLiealg,LarssonSilv:quasidefsl2,SigSilv:CzechJP2006:GradedquasiLiealgWitt}.
Subsequently, various classes of Hom-Lie admissible algebras have been considered in \cite{ms:homstructure}. In particular, in \cite{ms:homstructure}, the Hom-associative algebras have been introduced and shown to be Hom-Lie admissible, that is leading to Hom-Lie algebras using commutator map as new product, and in this sense constituting a natural generalization of associative algebras as Lie admissible algebras leading to Lie algebras using commutator map. Furthermore, in \cite{ms:homstructure}, more general $G$-Hom-associative algebras including Hom-associative algebras, Hom-Vinberg algebras (Hom-left symmetric algebras), Hom-pre-Lie algebras (Hom-right symmetric algebras), and some other Hom-algebra structures, generalizing $G$-associative algebras, Vinberg and pre-Lie algebras respectively, have been introduced and shown to be Hom-Lie admissible, meaning that for these classes of Hom-algebras, the operation of taking commutator leads to Hom-Lie algebras as well. Diagram \eqref{diagramhomliealg} illustrates the relations existing between some of these
structures. (Note that Rota-Baxter operators can be replaced by the more general
$\mathcal{O}$-operators in the upper and lower rows.)
 \begin{equation}\label{diagramhomliealg}
\begin{split}
\resizebox{13cm}{!}{\xymatrix{
\ar[rr] \mbox{\bf Hom-dendriform alg $(A,\prec,\succ,\alpha)$ }\ar[d]_{\mbox{ $\ast=\prec+\succ$}}\ar[rr]^{\mbox{\quad\quad $\cdot=\prec-\succ$\quad}}
                && \mbox{\bf Hom-pre-Lie alg $(A,\cdot ,\alpha)$ }\ar[d]_{\mbox{ Commutator}}\\
\ar[rr] \mbox{\bf Hom-associative alg $(A,\ast,\alpha)$}\ar@<-1ex>[u]_{\mbox{ R-B }}\ar[rr]^{\mbox{\quad\quad Commutator\quad}}
                && \mbox{\bf Hom-Lie alg  $(A,[-,-],\alpha)$}\ar@<-1ex>[u]_{\mbox{ R-B}}}
}\end{split}
\end{equation}
Also, flexible Hom-algebras have been introduced, connections to Hom-algebra generalizations of derivations and of adjoint maps have been noticed, and some low-dimensional Hom-Lie algebras have been described.
In Hom-algebra structures, defining algebra identities are twisted by linear maps.
Since the pioneering works
\cite{HartwigLarSil:defLiesigmaderiv,LarssonSilvJA2005:QuasiHomLieCentExt2cocyid,LarssonSilv:quasiLiealg,LSGradedquasiLiealg,LarssonSilv:quasidefsl2,ms:homstructure}, Hom-algebra structures have developed in a popular broad area with increasing number of publications in various directions.
Hom-algebra structures include their classical counterparts and open new broad possibilities for deformations, extensions to Hom-algebra structures of representations, homology, cohomology and formal deformations, Hom-modules and hom-bimodules, Hom-Lie admissible Hom-coalgebras, Hom-coalgebras, Hom-bialgebras, Hom-Hopf algebras, $L$-modules, $L$-comodules and Hom-Lie quasi-bialgebras, $n$-ary generalizations of biHom-Lie algebras and biHom-associative algebras, generalized derivations, Rota-Baxter operators, Hom-dendriform color algebras, Rota-Baxter bisystems and covariant bialgebras, Rota-Baxter cosystems, coquasitriangular mixed bialgebras, coassociative Yang-Baxter pairs, coassociative Yang-Baxter equation and generalizations of Rota-Baxter systems and algebras, curved $\mathcal{O}$-operator systems and their connections with tridendriform systems and pre-Lie algebras, BiHom-algebras, BiHom-Frobenius algebras and double constructions, infinitesimal biHom-bialgebras and Hom-dendriform $D$-bialgebras, and category theory of Hom-algebras
\cite{AmmarEjbehiMakhlouf:homdeformation,
AttanLaraiedh:2020ConstrBihomalternBihomJordan,
Bakayoko:LaplacehomLiequasibialg,
Bakayoko:LmodcomodhomLiequasibialg,
BakBan:bimodrotbaxt,
BakyokoSilvestrov:HomleftsymHomdendicolorYauTwi,
BakyokoSilvestrov:MultiplicnHomLiecoloralg,
BenMakh:Hombiliform,
BenAbdeljElhamdKaygorMakhl201920GenDernBiHomLiealg,
CaenGoyv:MonHomHopf,
DassoundoSilvestrov2021:NearlyHomass,
GrMakMenPan:Bihom1,
HassanzadehShapiroSutlu:CyclichomolHomasal,
HounkonnouDassoundo:centersymalgbialg,
HounkonnouDassoundo:homcensymalgbialg,
HounkonnouHoundedjiSilvestrov:DoubleconstrbiHomFrobalg,
kms:narygenBiHomLieBiHomassalgebras2020,
Laraiedh1:2021:BimodmtchdprsBihomprepois,
LarssonSilvJA2005:QuasiHomLieCentExt2cocyid,
LarssonSilv:quasidefsl2,
LarssonSigSilvJGLTA2008:QuasiLiedefFttN,
LarssonSilvestrovGLTMPBSpr2009:GenNComplTwistDer,
MaMakhSil:CurvedOoperatorSyst,
MaMakhSil:RotaBaxbisyscovbialg,
MaMakhSil:RotaBaxCosyCoquasitriMixBial,
MaZheng:RotaBaxtMonoidalHomAlg,
MabroukNcibSilvestrov2020:GenDerRotaBaxterOpsnaryHomNambuSuperalgs,
Makhlouf2010:ParadigmnonassHomalgHomsuper,
MakhSil:HomHopf,
MakhSilv:HomDeform,
MakhSilv:HomAlgHomCoalg,
MakYau:RotaBaxterHomLieadmis,
RichardSilvestrovJA2008,
RichardSilvestrovGLTbnd2009,
SaadaouSilvestrov:lmgderivationsBiHomLiealgebras,
ShengBai:homLiebialg,
Sheng:homrep,
SilvestrovParadigmQLieQhomLie2007,
SigSilv:GLTbdSpringer2009,
SilvestrovZardeh2021:HNNextinvolmultHomLiealg,
Yau:ModuleHomalg,
Yau:HomEnv,
Yau:HomHom,
Yau:HombialgcomoduleHomalg,
Yau:HomYangBaHomLiequasitribial}.

A Hom-type generalization of Malcev algebras, called Hom-Malcev
algebras, is defined in \cite{Yau}, where connections between Hom-alternative algebras and Hom-Malcev algebras are given. We aim in this paper to introduce and study, through Rota-Baxter operators and $\mathcal{O}$-operators, the relationship between Hom-Malcev, Hom-pre-Malcev algebras and Hom-M-dendriform algebras generalizing, then, Malcev  algebras, pre-Malcev algebras and M-dendriform algebras.
The anti-commutator of a Hom-pre-Malcev algebra is a Hom-Malcev algebra and the left multiplication operators give a representation of this Hom-Malcev algebra, which is the beauty of such a structure.
Similarly, a Hom-M-dendriform algebra gives rise to a Hom-pre-Malcev algebra, and a Hom-Malcev algebra, in the same way as Hom-L-dendriform algebra, gives rise to Hom-pre-Lie algebra and Hom-Lie algebra (see \cite{ChtiouiMabroukMakhlouf1,ChtiouiMabroukMakhlouf2,MakhloufHomdemdoformRotaBaxterHomalg2011} for more details).

In this paper we use $\mathcal{O}$-operators to split operations, although a generalization exists in the alternative setting in terms of bimodules: the Rota-Baxter operators
defined by Madariaga \cite{Madariaga}. Diagram \eqref{diagramhommalcev} summarizes the results of the present work.

In Section \ref{sec: prelimenaires}, we  summarize the definitions and some key constructions of  Hom-alternative algebras and Hom-Malcev algebras, we introduce the notion of $\mathcal{O}$-operator of Hom-Malcev algebras that generalizes the notion of Rota-Baxter operators and we develop (dual) representation of a Hom-Malcev algebras. In Section \ref{sec:Hom-pre-Malcev  algebras}, we introduce the notion of Hom-pre-Malcev algebra, provide  some properties and define the notion of a bimodule of a Hom-pre-Malcev algebra. Moreover, we develop  some constructions theorems. We show that on one hand, an $\mathcal{O}$-operator on a Hom-Malcev algebra gives a Hom-pre-Malcev algebra. On the other hand, a Hom-pre-Malcev algebra naturally gives an
$\mathcal{O}$-operator on the sub-adjacent Hom-Malcev algebra.
In Section \ref{sec:Hom-M-dendriform algebras}, we introduce the
 notion of Hom-M-dendriform algebra and then study some of their fundamental properties  in terms of the $\mathcal{O}$-operators of Hom-pre-Malcev algebras.  Their relationship with
Hom-Malcev algebras and Hom-alternative quadri-algebras are also described.
 Section \ref{sec:twistings} is devoted to showing that bimodules over Malcev and pre-Malcev
are twisted into bimodules over Hom-Malcev and Hom-pre-Malcev via endomorphisms
respectively.

\section{Preliminaries and basics}\label{sec: prelimenaires}
 The purpose of this section is to recall some basics about Hom-alternative  algebras introduced in \cite{Makhl:HomaltHomJord, Yau}. Moreover, we  give the definition of Hom-Malcev  algebras which may be viewed as a Hom-alternative algebra via the commutator bracket (see \cite{Yau}).


In this paper, all vector spaces are over a field $\mathbb{K}$ of characteristic $0$.

A Hom-algebra is a triple $(A, \mu, \alpha)$ in which $A$ is a vector space, $\mu: A^{\otimes2}\longrightarrow A$ is a bilinear map and $\alpha: A\longrightarrow  A$ is a linear map
(the twisting map). Hom-algebra is said to be multiplicative if $\alpha\circ \mu = \mu \circ \alpha^{\otimes 2}.$ Since many of the results in this paper depend on this property, we will assume the multiplicativity property for Hom-algebras as default in the paper, and thus will not write the word multiplicative each time, for simplicity of exposition. Also, when there is no ambiguity, we denote for simplicity the multiplication and composition by concatenation.

 \begin{defn}[\cite{Makhl:HomaltHomJord}]
A Hom-alternative algebra is a Hom-algebra $(A, \ast, \alpha)$ satisfying
for all $x, y, z \in A$,
\begin{equation}
\label{homalt}
as_{\alpha}(x,x,y)= as_{\alpha}(y, x, x)= 0,
\end{equation}
where $as_{\alpha}(x, y, z) = (x\ast y)\ast\alpha(z) -\alpha(x)\ast (y\ast z)$ is  the Hom-associator \textup{(}$\alpha$-associator\textup{)}.
\end{defn}

 \begin{defn}[\cite{Yau}]
A Hom-Malcev algebra is a Hom-algebra $(A, [-, -], \alpha)$ such that $[-, -]$ is anti-symmetric, and satisfies the Hom-Malcev identity for all $x, y, z \in A$,
\begin{equation}
\label{Hom-Malcev:Jacobiannotation}
J_{\alpha}(\alpha (x), \alpha (y), [x,z]) = [J_{\alpha}(x, y, z), \alpha^{2}(x)],
\end{equation}
where $J_{\alpha}(x, y, z)=[[x,y],\alpha (z)]+[[y,z],\alpha (x)]+[[z,x],\alpha (y)]$ is the Hom-Jacobian of $x, y, z$ \textup{(}$\alpha$-Jacobian\textup{)}.
\end{defn}

The Hom-Malcev identity \eqref{Hom-Malcev:Jacobiannotation} is equivalent to
\begin{align}\label{Hom-Malcev}
[\alpha ([x, z]), \alpha ([y, t])] &= [[[x, y], \alpha (z)], \alpha^{2}(t)] + [[[y, z], \alpha (t)], \alpha^{2} (x)]\\&  \nonumber
 + [[[z, t], \alpha (x)], \alpha^{2} (y)] + [[[t, x], \alpha (y)], \alpha^{2} (z)].
\end{align}
When $\alpha = Id_A$, we recover the Malcev algebra (see \cite{Malcev55:anltcloops}).

If the map $\alpha$ satisfies
$\alpha([x, y]) = [\alpha(x), \alpha(y)],$
then the Hom-Malcev algebra is said to be {\it multiplicative}.
Throughout this article we will impose multiplicative property on $\alpha$ because many of our results depend on it, and thus we will not write the word multiplicative each time.

Hom-alternative algebras related to Hom-Malcev algebras via admissibility \cite{Yau}.
\begin{thm} Any Hom-alternative algebra $(A, \ast, \alpha)$ is a Hom-Malcev admissible algebra. That is $(A, [- , -], \alpha)$ is a Hom-Malcev algebra with the bilinear bracket
defined by commutator $[x, y] = x\ast y - y\ast x$ for all $x, y \in A$.
\end{thm}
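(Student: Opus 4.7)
The plan is to mimic Malcev's classical proof that alternative algebras are Malcev-admissible, carefully threading the twisting map $\alpha$ through each computation. Antisymmetry of $[x,y] = x \ast y - y \ast x$ is immediate, so the core task is to verify the Hom-Malcev identity \eqref{Hom-Malcev:Jacobiannotation}.

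The first preparatory step is to linearize the Hom-alternativity axioms: expanding $as_\alpha(x+y, x+y, z) = 0$ and $as_\alpha(z, x+y, x+y) = 0$ yields that $as_\alpha$ is skew in any adjacent pair, hence an alternating trilinear map; in particular it is invariant under cyclic permutations of its arguments. Next, I would expand the Hom-Jacobian by splitting each $[[x,y], \alpha(z)]$ via the commutator formula, rewriting every product $(a \ast b) \ast \alpha(c)$ as $as_\alpha(a,b,c) + \alpha(a) \ast (b \ast c)$, and then summing cyclically. Collecting the $as_\alpha$-terms gives $12\, as_\alpha(x,y,z)$ (four copies in each cyclic slot), while the remaining "non-associator" terms reorganize into $[\alpha(x),[y,z]] + [\alpha(y),[z,x]] + [\alpha(z),[x,y]]$, which by antisymmetry of the bracket equals $-J_\alpha(x,y,z)$. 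This yields the clean identity $J_\alpha(x,y,z) = 6\, as_\alpha(x,y,z)$, the Hom-analogue of the classical fact that the Jacobian of an alternative algebra equals six times the associator.

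Since the ground field has characteristic zero, the Hom-Malcev identity then reduces to the Hom-Moufang-type relation
\[
as_\alpha(\alpha(x), \alpha(y), [x,z]) \;=\; \bigl[\,as_\alpha(x,y,z),\, \alpha^2(x)\,\bigr].
\]
This step is the main obstacle. In the classical $\alpha = \mathrm{id}$ case it is proved from the Moufang identities of alternative algebras, or equivalently from the Teichm\"uller identity $(ab,c,d)-(a,bc,d)+(a,b,cd) = a(b,c,d)+(a,b,c)d$ specialized at $d=a$ together with $(a,b,a)=0$. In the Hom-setting I would establish a multiplicative Hom-Teichm\"uller identity, whose validity relies only on bilinearity and the multiplicativity condition $\alpha \circ \ast = \ast \circ \alpha^{\otimes 2}$, and then apply it with arguments $(\alpha(x), \alpha(y), z, x)$ (or a suitable equivalent), using the alternating property to annihilate $as_\alpha$-terms of the form $as_\alpha(\alpha(x), \ast, \alpha(x))$.

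The most delicate bookkeeping is keeping track of the $\alpha$-levels so that every object on both sides lives at the correct twisting, which is where multiplicativity $\alpha([x,z]) = [\alpha(x), \alpha(z)]$ is used throughout. Apart from this, the structure of the argument is identical to the classical one, so once the Hom-Teichm\"uller identity and the alternating property are in place, the conclusion follows.
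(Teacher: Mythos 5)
The paper itself offers no proof of this theorem --- it is imported verbatim from \cite{Yau} --- so your proposal can only be judged on its own terms. Its first three steps are sound and follow the standard Sagle--Yau route: linearizing \eqref{homalt} shows that $as_{\alpha}$ is an alternating trilinear map, your expansion of the Hom-Jacobian (giving $J_{\alpha}(x,y,z)=12\,as_{\alpha}(x,y,z)-J_{\alpha}(x,y,z)$, hence $J_{\alpha}=6\,as_{\alpha}$) checks out, and in characteristic $0$ the Hom-Malcev identity \eqref{Hom-Malcev:Jacobiannotation} therefore reduces to $as_{\alpha}(\alpha(x),\alpha(y),[x,z])=[as_{\alpha}(x,y,z),\alpha^{2}(x)]$, exactly as you state.

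The gap sits precisely at the step you yourself call the main obstacle: this last identity does not follow from a Hom-Teichm\"uller identity plus the alternating property in the direct way you describe. The difficulty is already present in the untwisted case. Writing $(a,b,c)=(a\ast b)\ast c-a\ast(b\ast c)$ for the ordinary associator, specializing the Teichm\"uller identity at the relevant argument patterns and using alternation leads only to
\begin{equation*}
(x,y,[x,z])\;=\;2\,(x\ast x,y,z)-3\,x\ast(x,y,z)-(x,y,z)\ast x ,
\end{equation*}
so the required conclusion $(x,y,[x,z])=[(x,y,z),x]$ is still equivalent to the Moufang-type identity $(x\ast x,y,z)=x\ast(x,y,z)+(x,y,z)\ast x$. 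That identity is a genuine further consequence of alternativity --- classically obtained from the linearized Moufang identities or the skew-symmetry of the Bruck--Kleinfeld function --- and the natural attempts to extract it from Teichm\"uller plus alternation alone simply run in a circle. In the Hom-setting one must moreover determine where the powers of $\alpha$ sit in each such auxiliary identity (the Hom-analogue of the one above reads $as_{\alpha}(x\ast x,\alpha(y),\alpha(z))=\alpha^{2}(x)\ast as_{\alpha}(x,y,z)+as_{\alpha}(x,y,z)\ast\alpha^{2}(x)$, and it needs its own verification); this bookkeeping is exactly the content of Yau's proof and is entirely absent from your sketch. Until the Hom versions of these Moufang/Bruck--Kleinfeld identities are actually established, what you have is a plausible plan rather than a proof.
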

 Let  $(A, [-, -], \alpha)$ and  $(A', [-, -]', \alpha')$ be two Hom-Malcev algebras. A
linear map $f : A\to A' $ is said to be a morphism of Hom-Malcev algebras
if for all $ x, y\in A,$
$$[f(x)\, f(y)]' = f([x, y])~~\text{and}~~ f \circ \alpha = \alpha'\circ f$$
\begin{defn}
Let $(A,[-, -], \alpha)$ be a Hom-Malcev algebra and $V$ be a vector space, a linear map $\rho : A \longrightarrow End(V )$ is a  representation  of $(A,[-, -], \alpha)$ on $V$ with respect to $\beta \in
End(V )$ if for any $x, y, z \in A,$
\begin{eqnarray} \label{rephommalcev}
\rho(\alpha(x))\beta &=&  \beta\rho(x),\\
\rho([[x, y], \alpha (z)])\beta^{2} & =& \rho(\alpha^{2}(x))\rho(\alpha (y))\rho(z)-  \rho(\alpha^{2}(z))\rho(\alpha (x))\rho(y) \nonumber \\
& + & \rho(\alpha^{2}(y))\rho([z,x]) \beta -\rho(\alpha ([y, z]))\rho(\alpha (x))\beta.
\label{representation H-M}
\end{eqnarray}

\end{defn}
\begin{prop} \label{semidirectprduct HomMalcev}
Let $(A,[-, -], \alpha)$ be a Hom-Malcev algebra, $(V, \beta)$ a vector space and $\rho : A \longrightarrow End(V )$ a linear map. Then $(V, \rho, \beta)$ is a representation of $A$ if and only if
$(A \oplus V, [-, -]_{\rho}, \alpha + \beta)$ is a Hom-Malcev algebra, where $[-, -]_{\rho}$ and $\alpha + \beta$ are defined for all  $x, y \in  A,\ a, b \in V$ by
\begin{eqnarray*}
[x + a, y + b]_{\rho} &=& [x, y] + \rho(x)a - \rho(y)b, \\
(\alpha + \beta)(x + a) &=& \alpha(x) + \beta(a).
\end{eqnarray*}
This Hom-Malcev algebra is called the semi-direct product of $(A,[-, -], \alpha)$ and $(V, \beta)$, and denoted by
$A \ltimes _{\rho}^{\alpha, \beta} V$ or simply $A \ltimes V$.
\end{prop}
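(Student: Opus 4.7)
The plan is to verify each of the three defining features of a Hom-Malcev algebra for the triple $(A \oplus V, [-,-]_\rho, \alpha+\beta)$ and check that each condition, once separated into its $A$- and $V$-components, is equivalent to the corresponding axiom in the definition of a representation. Since the axioms \eqref{rephommalcev}--\eqref{representation H-M} are the only conditions beyond multiplicativity of $\alpha$ and the Hom-Malcev identity on $A$ itself, the implications can be run in both directions at once.

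First, anti-symmetry of $[-,-]_\rho$ is immediate from anti-symmetry of $[-,-]$ on $A$ and the definition of $[-,-]_\rho$. Second, the multiplicativity condition $(\alpha+\beta)[u,v]_\rho = [(\alpha+\beta)u,(\alpha+\beta)v]_\rho$ applied to $u=x+a$ and $v=y+b$ splits into the $A$-component $\alpha[x,y]=[\alpha(x),\alpha(y)]$, which is assumed, and the $V$-component
\[
\beta(\rho(x)a-\rho(y)b) \;=\; \rho(\alpha(x))\beta(a)-\rho(\alpha(y))\beta(b),
\]
which is precisely \eqref{rephommalcev}.

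Third, the Hom-Malcev identity \eqref{Hom-Malcev:Jacobiannotation} must be tested on triples $u=x+a$, $v=y+b$, $w=z+c$. The crucial observation is that $[a',b']_\rho = 0$ whenever $a',b' \in V$; hence any nested bracket with two or more $V$-entries vanishes. Projecting the resulting identity onto its $A$-component (that is, setting $a=b=c=0$) recovers the Hom-Malcev identity of $A$. For the $V$-component, by linearity in $(a,b,c)$ of the surviving terms it suffices to consider the three cases where exactly one of $a,b,c$ is nonzero. Expanding the Hom-Jacobian $J_{\alpha+\beta}$ and the outer bracket in each case, and using the anti-symmetry of $[-,-]$ together with the already-established compatibility $\rho(\alpha(x))\beta=\beta\rho(x)$, the surviving terms on each side regroup to yield precisely the identity \eqref{representation H-M} (with the roles of $x,y,z$ permuted according to which slot carries the $V$-vector). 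Since every step in this expansion is an equality of the form ``$V$-component of LHS $=$ $V$-component of RHS,'' the verification is in fact an equivalence, giving both directions of the proposition simultaneously.

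The main obstacle lies in the bookkeeping for this last step: the Hom-Malcev identity is cubic with its first slot repeated, so one must carefully track how a single $V$-entry inserted into a given slot propagates through each iterated bracket, and then match the resulting terms to the four summands of \eqref{representation H-M}. No identity beyond anti-symmetry of $[-,-]$ and the compatibility $\rho(\alpha(x))\beta=\beta\rho(x)$ is required, but one must handle the three sub-cases (one for each slot hosting the $V$-vector) and check that, up to relabeling, they all reduce to the same representation axiom.
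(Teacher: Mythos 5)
Your outline is correct, but note first that the paper states Proposition \ref{semidirectprduct HomMalcev} with no proof at all, so there is nothing of the authors' to compare against; what you describe is the standard split-null-extension verification, and it does go through. Two remarks. First, the bracket as printed, $[x+a,y+b]_{\rho}=[x,y]+\rho(x)a-\rho(y)b$, is not bilinear on $A\oplus V$ (the term $\rho(x)a$ is quadratic in the first argument) and it makes every mixed bracket $[x,b]_{\rho}$, $[a,y]_{\rho}$ vanish, which would render the ``only if'' direction false; the formula you must actually use --- and implicitly do use, since your observation that a nested bracket with a single $V$-entry survives depends on it --- is $[x+a,y+b]_{\rho}=[x,y]+\rho(x)b-\rho(y)a$. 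Second, the one genuinely substantive step in your sketch is the assertion that the three placements of the $V$-vector (four, after polarizing the repeated slot of \eqref{Hom-Malcev:Jacobiannotation}) all collapse to the single axiom \eqref{representation H-M} up to relabelling; this is not automatic for Malcev-type identities, it is exactly where the content of the proposition sits, and you state it rather than prove it. It is nonetheless true, and the clean way to see it is to test the multilinear form \eqref{Hom-Malcev} instead of \eqref{Hom-Malcev:Jacobiannotation}: the identity \eqref{Hom-Malcev} is invariant under the cyclic shift $(x,y,z,t)\mapsto(y,z,t,x)$ of its four arguments, so the four placements are carried into one another by relabelling, and a direct computation of the placement in the last slot yields \eqref{representation H-M} with $x$ and $y$ interchanged, after using $\beta\rho(x)=\rho(\alpha(x))\beta$, i.e.\ \eqref{rephommalcev}, which you correctly extract from multiplicativity of $\alpha+\beta$. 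With that observation supplied, both directions of the equivalence follow exactly as you say. (Minor: the identity is quartic, not cubic --- four letters with the first one repeated.)
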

\begin{ex}
 Let $(A,[-, -], \alpha)$ be a Hom-Malcev algebra.
Then $ad_{x} : A \longrightarrow End(A)$ definedfor all $x, y \in A$ by
$ad_{x}(y) = [x , y],$ is a representation of $(A, [-, -], \alpha)$  on $A$  with respect to $\alpha$, called the adjoint representation of $A$.
\end{ex}
Let $(V,\rho,\beta)$ be a representation of a Hom-Malcev algebra $(A,[-,-],\alpha)$. In the sequel, we always assume that $\beta$ is invertible. Define $\rho^*:A\longrightarrow gl(V^*)$ as usual by
$$\langle \rho^*(x)(\xi),a\rangle=-\langle\xi,\rho(x)(a)\rangle,\quad\forall\ x\in A,\ a\in V,\ \xi\in V^*.$$
However, in general $\rho^*$ is not a representation of $A$. Define $\rho^\star:A\longrightarrow gl(V^*)$ by
\begin{equation}\label{eq:new1}
 \rho^\star(x)(\xi)=\rho^*(\alpha(x))\big((\beta^{-2})^*(\xi)\big),\quad\forall\ x\in A, \ \xi\in V^*.
\end{equation}
More precisely, we have
\begin{eqnarray}\label{eq:new1gen}
\langle\rho^\star(x)(\xi),a\rangle=-\langle\xi,\rho(\alpha^{-1}(x))(\beta^{-2}(a))\rangle,\quad\forall\ x\in A,\ a\in V,\ \xi\in V^*.
\end{eqnarray}
\begin{lem}\label{lem:dualrep}
 Let $(V,\rho,\beta)$ be a representation of a Hom-Malcev algebra $(A,[-,-],\alpha)$. Then $\rho^\star:A\longrightarrow gl(V^*)$ defined above by \eqref{eq:new1} is a representation of $(A,[-,-],\alpha)$ on $V^*$ with respect to $(\beta^{-1})^*$, which is called the  dual representation of $(V,\rho,\beta)$.
\end{lem}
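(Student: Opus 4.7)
The plan is to verify directly the two defining conditions of a representation, \eqref{rephommalcev} and \eqref{representation H-M}, for the candidate $(V^*, \rho^\star, (\beta^{-1})^*)$. The strategy throughout is to pair every identity against an arbitrary $a \in V$ and $\xi \in V^*$, unfold $\rho^\star$ via \eqref{eq:new1gen}, and reduce to identities that already hold for $(V, \rho, \beta)$. A technical tool used repeatedly is the commutation
\[
\beta^{-k} \rho(u) = \rho(\alpha^{-k}(u))\beta^{-k}, \qquad k \geq 0,\; u \in A,
\]
which follows by iterating the first representation axiom $\rho(\alpha(u))\beta = \beta\rho(u)$ together with invertibility of $\beta$.

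The first axiom $\rho^\star(\alpha(x)) \circ (\beta^{-1})^* = (\beta^{-1})^* \circ \rho^\star(x)$ is straightforward: pairing both sides against $a$ and $\xi$ and using \eqref{eq:new1gen} twice reduces the equality to a single application of the commutation above. For the Hom-Malcev identity, pair both sides with $a$ and $\xi$. The left-hand side $\rho^\star([[x,y],\alpha(z)])((\beta^{-1})^*)^2\xi$, paired with $a$, unfolds by one use of \eqref{eq:new1gen}, the multiplicativity of $\alpha^{-1}$ on brackets, and the commutation above, into an expression of the form $-\langle \xi,\rho([[\alpha^{-3}(x),\alpha^{-3}(y)],\alpha^{-2}(z)])\beta^{-4}(a)\rangle$. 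Invoking \eqref{representation H-M} for $\rho$ with $(x,y,z)$ replaced by $(\alpha^{-3}(x),\alpha^{-3}(y),\alpha^{-3}(z))$ expands this into a sum of four terms. Similarly, each of the four summands on the right-hand side of the target identity is a composition of $\rho^\star$-operators (some terminated by $(\beta^{-1})^*$ and some involving $\rho^\star$ of a bracket such as $[z,x]$ or $\alpha([y,z])$); iterating \eqref{eq:new1gen} unfolds each one into a pairing of the form $\pm\langle \xi,\rho(v_3)\rho(v_2)\rho(v_1)\beta^{-k}(a)\rangle$, with the operator order reversed relative to the $u_i$'s and with additional $\alpha^{-1}$-twists accumulated from each application of \eqref{eq:new1gen}. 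After normalizing all $\beta$-powers to the right of the $\rho$-chain via the commutation, the four unfolded right-hand-side terms match one-by-one the four terms produced by the expansion of the left-hand side.

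The main obstacle is the combinatorial bookkeeping: keeping track of the precise powers of $\alpha^{-1}$ that land on each of $x$, $y$, $z$ and on the brackets $[z,x]$, $[y,z]$ after each commutation, and confirming that the order reversal intrinsic to dualization is exactly compensated by the $\alpha$- and $\beta^{-2}$-twists built into the definition \eqref{eq:new1}. Multiplicativity of $\alpha$ must also be invoked on the brackets $\alpha^{-1}([z,x])$ and $\alpha^{-1}(\alpha([y,z]))$ to bring them to the required $\alpha^{-k}$-twisted form. Once the $\alpha$- and $\beta$-powers on both sides are normalized, the identity is an immediate term-by-term match with \eqref{representation H-M} applied to the triple $(\alpha^{-3}(x),\alpha^{-3}(y),\alpha^{-3}(z))$.
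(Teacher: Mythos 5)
Your proposal is correct and follows essentially the same route as the paper's proof: both verify the two axioms by pairing against $a\in V$ and $\xi\in V^*$, unfolding $\rho^\star$, commuting powers of $\beta$ past $\rho$ via the first representation axiom, and reducing the Hom-Malcev condition to \eqref{representation H-M} applied to an $\alpha$-shifted triple, then re-packaging the four resulting terms as compositions of $\rho^\star$. The only cosmetic difference is that the paper works from \eqref{eq:new1} and normalizes with positive powers of $\alpha$ (applying \eqref{representation H-M} to $(\alpha(x),\alpha(y),\alpha(z))$), whereas you work from \eqref{eq:new1gen} and normalize with negative powers, applying it to $(\alpha^{-3}(x),\alpha^{-3}(y),\alpha^{-3}(z))$; the bookkeeping is equivalent.
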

\begin{proof}
For all $x\in A$, $\xi\in V^*$,  we have
\begin{align*}
\rho^\star(\phi(x))((\beta^{-1})^*(\xi)) &= \rho^*(\phi^{2}(x))(\beta^{-3})^*(\xi) \\
&=(\beta^{-1})^*(\rho^*(\phi(x))(\beta^{-2})^*(\xi))=(\beta^{-1})^*(\rho^\star(x)(\xi)),
\end{align*}
which implies $\rho^\star\big{(}\phi(x)\big{)}\circ(\beta^{-1})^*=(\beta^{-1})^*\circ\rho^\star(x)$.
On the other hand, for all $x,y\in A$, $\xi\in V^*$ and $a\in V$, we have
\begin{align*}
&\langle \rho^{\star}([[x,y],\alpha(z)])((\beta^{-2})^*(\xi)),a\rangle
=\langle\rho^*(\alpha ([[x,y],\alpha(z)]))((\beta^{-4})^*(\xi)),a\rangle\\
& =-\langle ((\beta^{-4})^*(\xi)),\rho(\alpha([[x,y],\alpha(z)]))(a)\rangle\\
& =-\langle ((\beta^{-4})^*(\xi)),\rho(\alpha^{3}(x))\rho(\alpha^{2}(y))\rho(\alpha(z))\beta^{-2}(a) -\rho(\alpha^{3}(z))\rho(\alpha^{2}(x))\rho(\alpha(y))\beta^{-2}(a)\\
 &\quad + \rho(\alpha^{3}(y))\rho(\alpha([z, x]))\beta^{-1}(a) - \rho(\alpha^{2}([y, z]))\rho(\alpha^{2}(x))\beta^{-1}(a)\rangle\\
& =-\langle ((\beta^{-6})^*(\xi)),\rho(\alpha^{4}(x))\rho(\alpha^{3}(y))\rho(\alpha^{2}(z))(a) - \rho(\alpha^{4}(z))\rho(\alpha^{3}(x))\rho(\alpha^{2}(y))(a)\\
 &\quad + \rho(\alpha^{4}(y))\rho(\alpha^{2}([z, x]))\beta(a)
 - \rho(\alpha^{3}([y, z]))\rho(\alpha^{3}(x))\beta(a)\rangle\\
&=\langle\rho^{\ast}(\alpha^{4}(x))\rho^{\ast}(\alpha^{3}(y))\rho^{\ast}(\alpha^{2}(z))((\beta^{-6})^*(\xi)) -\rho^{\ast}(\alpha^{4}(z))\rho^{\ast}(\alpha^{3}(x))\rho^{\ast}(\alpha^{2}(y))((\beta^{-6})^*(\xi))\\
&\quad+ \rho^{\ast}(\alpha^{4}(y))\rho^{\ast}(\alpha^{2}([z, x]))\beta^{-5} - \rho^{\ast}(\alpha^{3}([y, z]))\rho^{\ast}(\alpha^{3}(x))((\beta^{-5})^*(\xi)), a\rangle\\
&=\langle\rho^{\star}(\alpha^{2}(x))\rho^{\star}(\alpha (y))\rho^{\star}(z)-  \rho^{\star}(\alpha^{2}(z))\rho^{\star}(\alpha (x))\rho^{\star}(y)\\  &\quad+\rho^{\star}(\alpha^{2}(y))\rho^{\star}([z,x]) \beta^{-1} -\rho^{\star}(\alpha ([y, z]))\rho^{\star}(\alpha (x))\beta^{-1},a\rangle,
\end{align*}
which implies that
\begin{align*}
\rho^\star([[x,y],\alpha(z)])\circ(\beta^{-2})^*&=\rho^{\star}(\alpha^{2}(x))\rho^{\star}(\alpha (y))\rho^{\star}(z)-  \rho^{\star}(\alpha^{2}(z))\rho^{\star}(\alpha (x))\rho^{\star}(y)\\
&+\rho^{\star}(\alpha^{2}(y))\rho^{\star}([z,x]) \beta^{-1} -\rho^{\star}(\alpha ([y, z]))\rho^{\star}(\alpha (x))\beta^{-1}.
\end{align*}
Therefore, $\rho^\star$ is a representation of $(A,[-,-],\alpha)$ on $V^*$ with respect to $(\beta^{-1})^*$.
\end{proof}
\begin{lem}\label{lem:dualdual}
If $(V,\rho,\beta)$ is a representation of Hom-Malcev algebra  $(A,[-,-],\alpha)$, then  $$(\rho^\star)^\star=\rho.$$
\end{lem}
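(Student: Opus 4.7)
The plan is to unfold the definition of the dual representation twice and verify that the two sign flips and the two twist insertions cancel precisely. The key observation is that if $(V,\rho,\beta)$ has twist $\beta$, then $(V^\ast,\rho^\star,(\beta^{-1})^\ast)$ has twist $\gamma=(\beta^{-1})^\ast$, and the construction applied a second time produces an object with twist $(\gamma^{-1})^\ast=\beta^{\ast\ast}$, which under the canonical identification $V^{\ast\ast}\cong V$ is just $\beta$. So $(\rho^\star)^\star$ lives in the correct space $gl(V)$ to be compared with $\rho$.

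First I would record the compact form of \eqref{eq:new1}, namely $\rho^\star(y)(\eta)=\rho^\ast(\alpha(y))\bigl((\beta^{-2})^\ast(\eta)\bigr)$ for $y\in A$, $\eta\in V^\ast$. Then I would apply \eqref{eq:new1gen} to the representation $(V^\ast,\rho^\star,(\beta^{-1})^\ast)$ provided by Lemma \ref{lem:dualrep}: for $x\in A$, $a\in V\cong V^{\ast\ast}$, $\xi\in V^\ast$,
\[
\langle (\rho^\star)^\star(x)(a),\xi\rangle = -\bigl\langle a,\,\rho^\star(\alpha^{-1}(x))\bigl(((\beta^{-1})^\ast)^{-2}(\xi)\bigr)\bigr\rangle.
\]
Using $((\beta^{-1})^\ast)^{-2}=(\beta^2)^\ast$ and the compact form above,
\[
\rho^\star(\alpha^{-1}(x))\bigl((\beta^2)^\ast(\xi)\bigr)=\rho^\ast(x)\bigl((\beta^{-2})^\ast\circ(\beta^2)^\ast(\xi)\bigr)=\rho^\ast(x)(\xi).
\]
Inserting this into the previous display and invoking the defining identity $\langle\rho^\ast(x)(\xi),a\rangle=-\langle\xi,\rho(x)(a)\rangle$ of the naive dual, the two minus signs cancel and I obtain
\[
\langle (\rho^\star)^\star(x)(a),\xi\rangle = -\langle\rho^\ast(x)(\xi),a\rangle = \langle\xi,\rho(x)(a)\rangle = \langle\rho(x)(a),\xi\rangle.
\]
Since $\xi\in V^\ast$ is arbitrary, this yields $(\rho^\star)^\star(x)(a)=\rho(x)(a)$ for all $x\in A$, $a\in V$.

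No use is made of the Hom-Malcev representation identity \eqref{representation H-M}; the equality is a purely formal consequence of the definition of $\rho^\star$ together with the identification $V^{\ast\ast}\cong V$. The only mildly delicate point is keeping the twists straight, since $(\rho^\star)^\star$ is built from the construction applied not to $\beta$ but to $(\beta^{-1})^\ast$, so the power $-2$ in \eqref{eq:new1} becomes $+2$ after inversion; once one notices that this precisely cancels the factor $(\beta^{-2})^\ast$ hidden inside $\rho^\star$, the conclusion is immediate.
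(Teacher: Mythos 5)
Your proof is correct and follows essentially the same route as the paper's: unfold the dual construction twice, observe that the twist $((\beta^{-1})^\ast)^{-2}=(\beta^2)^\ast$ cancels the $(\beta^{-2})^\ast$ inside $\rho^\star$, and let the two sign flips cancel. The only cosmetic difference is that you invoke the expanded form \eqref{eq:new1gen} at the outer layer (so the $\alpha$ and $\alpha^{-1}$ cancel immediately, and the intertwining condition \eqref{rephommalcev} is absorbed into the equivalence of \eqref{eq:new1} and \eqref{eq:new1gen}), whereas the paper applies the compact form \eqref{eq:new1} and uses $\rho(\alpha^2(x))\beta^2=\beta^2\rho(x)$ explicitly in the final step.
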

\begin{proof}
For all $x\in A,a\in V,\xi\in V^*$, we have
\begin{eqnarray*}
\langle (\rho^\star)^\star(x)(a),\xi\rangle
&=&\langle(\rho^\star)^*(\alpha(x))(\beta^2(a)),\xi\rangle
=-\langle\beta^2(a),\rho^\star(\alpha(x))(\xi)\rangle\\
&=&-\langle\beta^2(a),\rho^*(\alpha^2(x))((\beta^{-2})^*(\xi))\rangle
=\langle\rho(\alpha^2(x))(\beta^2(a)),(\beta^{-2})^*(\xi)\rangle\\
&=&\langle\rho(x)(a),\xi\rangle,
\end{eqnarray*}
which implies that $(\rho^\star)^\star=\rho$.
\end{proof}

\begin{cor}\label{lem:rep}
 If $(A,[-,-],\alpha)$ is a Hom-Malcev algebra, then $ad^{\star}:A\rightarrow gl(A^*)$  defined
 for all $x\in A, \ \xi\in A^*$ by
 \begin{equation}
  ad^\star(x)\xi=ad^*(\alpha(x))(\alpha^{-2})^*(\xi),
 \end{equation}
 is a representation of the Hom-Malcev algebra $(A,[-,-],\alpha)$ on $A^*$ with respect to $(\alpha^{-1})^*$, called the coadjoint representation.
\end{cor}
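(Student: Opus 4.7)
The plan is to derive this corollary as an immediate application of Lemma \ref{lem:dualrep} to the adjoint representation. First I would recall that the example preceding Lemma \ref{lem:dualrep} already establishes that $(A, ad, \alpha)$ is a representation of the Hom-Malcev algebra $(A,[-,-],\alpha)$ on itself with respect to the twisting map $\alpha$, where $ad(x)(y)=[x,y]$.

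Then I would invoke Lemma \ref{lem:dualrep} with the specific choices $(V,\rho,\beta)=(A,ad,\alpha)$. Since $\alpha$ is assumed invertible (as was done throughout the discussion of dual representations, where $\beta$ was assumed invertible), the lemma produces a dual representation $ad^\star:A\rightarrow gl(A^*)$ of $(A,[-,-],\alpha)$ on $A^*$ with respect to $(\alpha^{-1})^*$. Substituting $\rho=ad$ and $\beta=\alpha$ into the defining formula \eqref{eq:new1}, namely $\rho^\star(x)(\xi)=\rho^*(\alpha(x))\big((\beta^{-2})^*(\xi)\big)$, yields exactly
\[
ad^\star(x)(\xi)=ad^*(\alpha(x))\big((\alpha^{-2})^*(\xi)\big),
\]
which is the formula stated in the corollary.

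Hence the entire proof reduces to two lines: verify the hypotheses of Lemma \ref{lem:dualrep} for the adjoint representation, then specialize the conclusion. There is no genuine obstacle here since all the nontrivial computation — verifying the representation identities \eqref{rephommalcev} and \eqref{representation H-M} for the dual map — has already been carried out in the proof of Lemma \ref{lem:dualrep}. The only small point worth mentioning is the standing invertibility assumption on $\alpha$, which is needed so that $(\alpha^{-1})^*$ and $(\alpha^{-2})^*$ make sense; this is consistent with the convention adopted just before Lemma \ref{lem:dualrep}.
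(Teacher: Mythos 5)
Your proposal is correct and is exactly the derivation the paper intends: the corollary is stated as an immediate consequence of Lemma \ref{lem:dualrep} applied to the adjoint representation $(A,ad,\alpha)$ from the preceding example, with $\rho=ad$ and $\beta=\alpha$ specializing formula \eqref{eq:new1} to the stated one. Your remark about the standing invertibility assumption on $\alpha$ is the right point to flag and is consistent with the paper's convention.
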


The following terminology is motivated by the notion of $\mathcal{O}$-operator as a generalization of Rota-Baxter operator of weight $0$.
\begin{defn}
A linear map $T : V \to  A$ is called an $\mathcal{O}$-operator  associated to a representation $(V,\rho,\beta)$ of a Hom-Malcev algebra $(A, [-, -], \alpha)$ if
for all $a, b \in V,$
\begin{align}\label{ophommalcev}
 \alpha\circ T =  T\circ\beta, \quad \quad [T (a), T (b)] = T \big(\rho(T (a))b - \rho(T (b))a\big).
 \end{align}
\end{defn}
 \begin{ex}
 A Rota-Baxter operator of  weight $0$ on a Hom-Malcev algebra $A$ is
just an $\mathcal{O}$-operator associated to the adjoint representation $(A,ad,\alpha)$, that is, $\mathcal{R}$ satisfies
$$ \mathcal{R}\circ\alpha = \alpha\circ \mathcal{R}, \quad \quad [\mathcal{R}(x),\mathcal{R}(y)] = \mathcal{R}([\mathcal{R}(x), y] + [x, \mathcal{R}(y)]),$$
for all $x,y\in A$.
\end{ex}

 \section{Hom-pre-Malcev  algebras}\label{sec:Hom-pre-Malcev  algebras}
In this section, we generalize the notion of pre-Malcev algebras introduced in \cite{Madariaga} to the Hom
case and study the relationships with Hom-Malcev algebras in terms of $\mathcal{O}$-operators of Hom-Malcev algebras and Hom-pre-alternative algebras. Moreover we characterize the representation of Hom-pre-Malcev algebras and provide some key constructions.
\subsection{Definition and basic properties}\label{subsec:Definition and basic properties}
\begin{defn}
A  Hom-pre-Malcev algebra is  a  Hom-algebra $(A, \cdot, \alpha)$ satisfying, for any $x,y,z,t \in A$ and $[x,y]=x\cdot y-y\cdot x,$ the identity
\begin{equation}\label{HPM}
\begin{split}
  &[\alpha(y),\alpha(z)] \cdot \alpha(x \cdot t) + [[x , y] , \alpha(z)] \cdot \alpha^{2}(t) \\
&\qquad + \alpha^{2}(y) \cdot ([x , z] \cdot \alpha (t)) - \alpha^{2}(x) \cdot (\alpha (y)  \cdot (z \cdot t)) + \alpha^{2}(z) \cdot (\alpha (x) \cdot (y \cdot t))=0.\end{split}
\end{equation}
\end{defn}

The identity \eqref{HPM} is equivalent to
$HPM(x, y, z, t) = 0$, where for all $x,y,z,t\in A,$
\begin{equation}\label{HPMexpanded}
\begin{split}
HPM(x, y, z, t) &=  \alpha(y \cdot z) \cdot \alpha(x \cdot t) - \alpha(z \cdot y) \cdot \alpha(x \cdot t)\\
&\quad+ ((x \cdot y) \cdot \alpha(z)) \cdot \alpha^{2}(t) - ((y \cdot x) \cdot \alpha (z)) \cdot \alpha^{2}(t)\\
 &\quad- (\alpha (z) \cdot (x \cdot y)) \cdot \alpha^{2}(t) + (\alpha (z) \cdot (y \cdot x)) \cdot \alpha^{2}(t)\\
&\quad+ \alpha^{2}(y) \cdot ((x \cdot z) \cdot \alpha (t)) - \alpha^{2}(y) \cdot ((z \cdot x) \cdot \alpha (t))\\
 &\quad- \alpha^{2}(x) \cdot (\alpha (y)  \cdot (z \cdot t)) + \alpha^{2}(z) \cdot (\alpha (x) \cdot (y \cdot t)).
\end{split}
\end{equation}
A Hom-pre-Malcev algebra is said to be a multiplicative Hom-pre-Malcev algebra if $\alpha$ satisfies
$\alpha(x \cdot y) = \alpha(x) \cdot \alpha(y),$ for all $x, y \in A.$

Hom-pre-Malcev algebras generalize Hom-pre-Lie algebras. A Hom-pre-Lie algebra
 is a vector space $A$ with a bilinear product $\cdot$ and a linear map $\alpha$ satisfying the Hom-pre-Lie
identity for all $x, y, z \in A$,
$$HPL(x, y, z) = as_\alpha(x, y, z) - as_\alpha(y, x, z) = 0,$$
where $as_\alpha(x, y, z)=(x\cdot y)\cdot\alpha(z)-\alpha(x)\cdot(y\cdot z)$ is the Hom-associator. Note that
\begin{align*}
HPM(x, y, z, t) &= HPL([x, y],\alpha (z), \alpha (t)) - HPL([y, x], \alpha (z),  \alpha (t)) \\
& \quad+ HPL(\alpha (x), \alpha (y), [z, t])+ HPL(\alpha (y), \alpha (z), [x, t]) \\
& \quad- [\alpha^{2}(z), HPL(x, y, t)] + [\alpha^{2}(y), HPL(x, z, t)].
\end{align*}
So every Hom-pre-Lie algebra is a Hom-pre-Malcev algebra.

When $\alpha = Id_A$, Hom-pre-Malcev algebra $(A, \cdot, \alpha)$ is a pre-Malcev algebra.

\begin{defn}
 Let  $(A, \cdot, \alpha)$ and  $(A', \cdot', \alpha')$ be two Hom-pre-Malcev algebras. A
linear map $f : A\to A' $ is called a morphism of Hom-pre-Malcev algebras
if, for all $x, y\in A$,
$$f(x)\cdot' f(y) = f(x\cdot y),~~~~ f \circ \alpha = \alpha'\circ f. $$
\end{defn}

\begin{prop} \label{prop:HompreMalcevHomMalcevadmis}
 Let $(A, \cdot, \alpha)$ be a Hom-pre-Malcev algebra.
 The commutator given, for all $x,\ y\in A$, by
 \begin{align}\label{commutator}
 [x, y] = x \cdot y - y \cdot x,
 \end{align}
  defines a Hom-Malcev algebra structure on $A$.
 \end{prop}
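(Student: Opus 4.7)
Antisymmetry of $[x,y]=x\cdot y-y\cdot x$ is immediate, so the task reduces to verifying the Hom-Malcev identity. I would work with the equivalent four-variable form \eqref{Hom-Malcev} rather than \eqref{Hom-Malcev:Jacobiannotation}, since it has the same number of arguments as the defining Hom-pre-Malcev identity \eqref{HPM} and avoids the repeated variable appearing in $J_{\alpha}(\alpha(x),\alpha(y),[x,z])$.

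The strategy is direct expansion. In the difference
$$[\alpha([x,z]),\alpha([y,t])]\;-\;\sum_{\mathrm{cyc}} [[[x,y],\alpha(z)],\alpha^{2}(t)],$$
every bracket $[a,b]$ is replaced by $a\cdot b - b\cdot a$, and multiplicativity of $\alpha$, i.e.\ $\alpha([a,b])=[\alpha(a),\alpha(b)]$ and $\alpha(a\cdot b)=\alpha(a)\cdot\alpha(b)$, is used to push all $\alpha$'s inside. The result is a signed sum of quadruple $\cdot$-products in $x,y,z,t$ twisted by appropriate powers of $\alpha$, falling into the four associative shapes $((a\cdot b)\cdot c)\cdot d$, $(a\cdot (b\cdot c))\cdot d$, $(a\cdot b)\cdot(c\cdot d)$ and $a\cdot(b\cdot(c\cdot d))$. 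I would then regroup this sum and recognize it as a linear combination of expressions $HPM(\sigma(x),\sigma(y),\sigma(z),\sigma(t))$ in the expanded form \eqref{HPMexpanded}, each of which vanishes by hypothesis. Guided by the classical pre-Malcev to Malcev argument in \cite{Madariaga}, I expect one $HPM$-instance to be contributed by each of the four cyclic permutations $(x,y,z,t)$, $(y,z,t,x)$, $(z,t,x,y)$, $(t,x,y,z)$ that appear on the right of \eqref{Hom-Malcev}, together with complementary contributions needed to consume the left-hand side $[\alpha([x,z]),\alpha([y,t])]$.

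The main obstacle is the bookkeeping: after full expansion there are on the order of forty signed quadruple products to track, with distinct $\alpha$-powers and sign patterns arising from each occurrence of $[-,-]$, and identifying the correct linear combination of permuted $HPM$-identities is the combinatorial core of the proof. Multiplicativity of $\alpha$ introduces no genuine new difficulty: every $PM(\sigma(x),\sigma(y),\sigma(z),\sigma(t))$-identity used in the classical argument of \cite{Madariaga} lifts to an $HPM(\sigma(x),\sigma(y),\sigma(z),\sigma(t))$-identity with all variables replaced by appropriate powers of $\alpha$, and the cancellations proceed by exactly the same substitution pattern. Thus the classical proof transports through once one is careful to record the twist on each factor.
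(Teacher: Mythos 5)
Your strategy is exactly the paper's: expand the four-variable form \eqref{Hom-Malcev} of the Hom-Malcev identity in terms of $\cdot$, using multiplicativity of $\alpha$, and recognize the resulting signed sum of quadruple products as a sum of four permuted instances of $HPM$, each vanishing by \eqref{HPM}. The one detail to correct is your guess at the permutations: the combination that actually closes the computation is $HPM(x,t,y,z)+HPM(y,x,z,t)+HPM(z,y,t,x)+HPM(t,z,x,y)$, i.e.\ each argument tuple has the shape $(a,c^{3}(a),c(a),c^{2}(a))$ for the cycle $c\colon x\mapsto y\mapsto z\mapsto t\mapsto x$, rather than the plain cyclic shifts $(x,y,z,t)$, $(y,z,t,x)$, $(z,t,x,y)$, $(t,x,y,z)$ you propose.
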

\begin{proof}
We show that the commutator \eqref{commutator} satisfies
the identity \eqref{Hom-Malcev}. For $x, y, z, t \in A$,
\begin{align*}
&[\alpha ([x, z]), \alpha([y, t])] - [[[x, y], \alpha (z)], \alpha^{2} (t)] - [[[y, z], \alpha(t)], \alpha^{2}(x)] \\
&\quad - [[[z, t], \alpha (x)], \alpha^{2}(y)]- [[[t, x], \alpha (y)], \alpha^{2}(z)]\\
&=\alpha(x \cdot z) \cdot \alpha(y \cdot t) - \alpha (x \cdot z) \cdot \alpha(t \cdot y) - \alpha(z \cdot x) \cdot \alpha(y \cdot t) + \alpha(z  \cdot x) \cdot \alpha(t \cdot y)\\
&\quad - \alpha(y \cdot t) \cdot \alpha(x \cdot z)  + \alpha(y \cdot t) \cdot \alpha(z \cdot x)+ \alpha(t \cdot y) \cdot \alpha(x \cdot z) - \alpha(t \cdot y) \cdot \alpha(z \cdot x)\\
&\quad-  ((x \cdot y) \cdot \alpha(z)) \cdot \alpha^{2}(t) + ((y \cdot x) \cdot \alpha(z)) \cdot \alpha^{2}(t) + (\alpha(z) \cdot (x \cdot y)) \cdot \alpha^{2}(t)\\
&\quad- (\alpha(z) \cdot (y \cdot x)) \cdot \alpha^{2}(t)+ \alpha^{2}(t) \cdot ((x \cdot y) \cdot \alpha (z)) - \alpha^{2}(t) \cdot ((y \cdot x) \cdot \alpha (z))\\
&\quad- \alpha^{2}(t) \cdot (\alpha (z) \cdot (x \cdot y))+ \alpha^{2}(t) \cdot (\alpha(z) \cdot (y \cdot x))- ((y \cdot z) \cdot \alpha (t)) \cdot \alpha^{2}(x)\\
&\quad+ ((z \cdot y) \cdot \alpha(t)) \cdot \alpha^{2}(x)+ (\alpha(t) \cdot (y \cdot z))  \cdot \alpha^{2}(x) - (\alpha(t) \cdot (z \cdot y)) \cdot \alpha^{2}(x)\\
&\quad+ \alpha^{2}(x) \cdot ((y \cdot z) \cdot \alpha(t))- \alpha^{2}(x) \cdot ((z \cdot y) \cdot \alpha(t)) - \alpha^{2}(x) \cdot (\alpha(t) \cdot (y \cdot z))\\
 &\quad+ \alpha^{2}(x) \cdot (\alpha(t) \cdot (z \cdot y))- ((z \cdot t) \cdot \alpha(x)) \cdot \alpha^{2}(y) + ((t \cdot z) \cdot \alpha(x)) \cdot \alpha^{2}(y)\\
 &\quad+ (\alpha(x) \cdot (z \cdot t)) \cdot \alpha^{2}(y)- (\alpha(x) \cdot (t \cdot z)) \cdot \alpha^{2}(y)+ \alpha^{2}(y) \cdot ((z \cdot t) \cdot \alpha(x))\\
 &\quad- \alpha^{2}(y) \cdot ((t \cdot z) \cdot \alpha(x))- \alpha^{2}(y) \cdot (\alpha(x) \cdot (z \cdot t)) + \alpha^{2}(y) \cdot (\alpha(x) \cdot (t \cdot z))\\
&\quad- ((t \cdot x) \cdot \alpha(y)) \cdot \alpha^{2}(z)+ ((x \cdot t) \cdot \alpha(y)) \cdot \alpha^{2}(z)+ (\alpha(y) \cdot (t \cdot x)) \cdot \alpha^{2}(z)\\
&\quad+ (\alpha(y) \cdot (x \cdot t)) \cdot \alpha^{2}(z)+ \alpha^{2}(z) \cdot ((t \cdot x) \cdot \alpha(y)) - \alpha^{2}(z) \cdot ((x \cdot t) \cdot \alpha(y))\\
 &\quad- \alpha^{2}(z) \cdot (\alpha(y) \cdot (t \cdot x))+ \alpha^{2}(z) \cdot (\alpha(y) \cdot (x \cdot t))\\
&= HPM(x, t, y, z)  + HPM(y, x, z, t) + HPM(z, y, t, x)  + HPM(t, z, x, y) = 0.
\qedhere   \end{align*}
 \end{proof}

\begin{defn}
The Hom-Malcev algebra structure in Proposition \ref{prop:HompreMalcevHomMalcevadmis} is called the associated Hom-Malcev algebra of the Hom-pre-Malcev algebra $(A,\cdot, \alpha)$, and the  Hom-pre-Malcev algebra $(A, \cdot, \alpha)$ is called a compatible Hom-pre-Malcev algebra on the Hom-Malcev algebra $(A, [-, -], \alpha)$.
\end{defn}

A Hom-pre-Malcev algebra can be viewed as a Hom-Malcev algebra whose
operation decomposes into two compatible pieces.

Examples of Hom-pre-Malcev algebras can be constructed from Hom-Malcev algebras with $\mathcal{O}$-operators. Let $(A,[-,-],\alpha)$ be a Hom-Malcev algebra  and $T : V \to A$ be an $\mathcal{O}$-operator of  $A$
associated to a module $(V,\rho, \beta)$. Define the product $"\cdot"$ by \begin{equation}
a\cdot b= \rho(T(a))b, \quad \forall\ a, b \in V.
\end{equation}

\begin{prop}\label{hommalcev==>hompremalcev}
With the above notations $(V,\cdot, \beta)$ is a Hom-pre-Malcev algebra, and there exists an associated Hom-Malcev algebra structure on $V$ given by \eqref{commutator} and $T$
is a homomorphism of Hom-Malcev algebras. Furthermore, $T(V) = \{T(a),\ a \in V \}\subset A$ is a Hom-Malcev
subalgebra of $(A, [-, -], \alpha)$ and $(T (V ),[-, -],
\alpha)$ is a Hom-pre-Malcev algebra structure given,
for all $a, b \in V,$ by $T(a)\cdot T(b) = T(a\cdot b). $
Moreover, the corresponding associated Hom-Malcev algebra structure on  $T (V)$ given by \eqref{commutator} is just a
Hom-Malcev subalgebra  structure of $(A, [-, -], \alpha)$, and $T$ is a morphism of Hom-pre-Malcev algebras.
\end{prop}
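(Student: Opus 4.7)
The plan is to establish the five assertions in the order they are stated, with the hard work concentrated in verifying identity \eqref{HPM} on $V$; once this is done, everything else follows formally from the $\mathcal{O}$-operator identity and Proposition~\ref{prop:HompreMalcevHomMalcevadmis}.

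\textbf{Stage 1 (the core calculation).} I would verify \eqref{HPM} for $(V,\cdot,\beta)$ with $a\cdot b := \rho(T(a))b$. Writing the induced commutator $[a,b]_V = \rho(T(a))b-\rho(T(b))a$, the defining identity $[T(a),T(b)] = T([a,b]_V)$ of an $\mathcal{O}$-operator means $T$ intertwines the two brackets, and together with $\alpha T = T\beta$ and $\rho(\alpha(x))\beta = \beta\rho(x)$ this lets me rewrite every term of \eqref{HPM} in the shorthand $x=T(a),\ y=T(b),\ z=T(c)$, evaluated at $d\in V$, as
\begin{align*}
\rho(\alpha([y,z]))\rho(\alpha(x))\beta(d) &+ \rho([[x,y],\alpha(z)])\beta^{2}(d) + \rho(\alpha^{2}(y))\rho([x,z])\beta(d) \\
&- \rho(\alpha^{2}(x))\rho(\alpha(y))\rho(z)(d) + \rho(\alpha^{2}(z))\rho(\alpha(x))\rho(y)(d).
\end{align*}
Using the anti-symmetry $\rho([x,z]) = -\rho([z,x])$ in the third summand converts this expression into exactly the left-hand side of the representation identity \eqref{representation H-M} applied to $d$, which vanishes. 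Hence $(V,\cdot,\beta)$ is a Hom-pre-Malcev algebra.

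\textbf{Stage 2 (Hom-Malcev structure on $V$ and on $T(V)$).} By Proposition~\ref{prop:HompreMalcevHomMalcevadmis}, the commutator $[-,-]_V$ endows $V$ with a Hom-Malcev structure. The $\mathcal{O}$-operator condition now reads $T([a,b]_V) = [T(a),T(b)]$, and combined with $\alpha T = T\beta$ it says $T$ is a morphism of Hom-Malcev algebras. In particular $T(V)$ is closed under $\alpha$ and $[-,-]$, so it is a Hom-Malcev subalgebra of $(A,[-,-],\alpha)$.

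\textbf{Stage 3 (transferring the Hom-pre-Malcev structure to $T(V)$).} Declare $T(a)\cdot T(b) := T(a\cdot b) = T(\rho(T(a))b)$. The main technical point here is well-definedness when $T$ is not injective: the right-hand side manifestly depends only on $T(a)$ (not on $a$) in the first argument, and independence of the representative of $T(b)$ follows from the identity $T(a\cdot b) - T(b\cdot a) = [T(a),T(b)]$ together with the fact that $T(b\cdot a) = T(\rho(T(b))a)$ depends only on $T(b)$. Once $\cdot$ is well-defined on $T(V)$, every term of \eqref{HPM} for $T(a_1),\dots,T(a_4)$ equals $T$ of the corresponding term for $a_1,\dots,a_4$ in $(V,\cdot,\beta)$, so the identity transfers and $(T(V),\cdot,\alpha)$ is Hom-pre-Malcev. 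Finally, $T(a)\cdot T(b) - T(b)\cdot T(a) = T([a,b]_V) = [T(a),T(b)]$ shows the associated commutator Hom-Malcev structure on $T(V)$ coincides with the subalgebra structure inherited from $A$, and the equality $T(a\cdot b) = T(a)\cdot T(b)$ combined with $\alpha T = T\beta$ makes $T$ a morphism of Hom-pre-Malcev algebras.

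The main obstacle is the bookkeeping in Stage~1: matching the five summands of \eqref{HPM} against the four summands of the representation identity \eqref{representation H-M}, which requires careful use of anti-symmetry of the bracket together with multiplicativity of $\alpha$ and the twisting relation $\rho(\alpha(x))\beta = \beta\rho(x)$. The secondary subtlety is the well-definedness of $\cdot$ on $T(V)$ addressed in Stage~3.
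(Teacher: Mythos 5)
Your proposal is correct and follows essentially the same route as the paper: the core of the argument is the rewriting of each of the five terms of \eqref{HPM} (with $a\cdot b=\rho(T(a))b$) via $T([a,b]_T)=[T(a),T(b)]$, $\alpha T=T\beta$ and $\beta\rho(x)=\rho(\alpha(x))\beta$ into the five terms of the representation identity \eqref{representation H-M}, exactly as in the paper's computation. Your Stage 3 additionally spells out the well-definedness of the induced product on $T(V)$ when $T$ is not injective, a point the paper dismisses with ``the other statements follow immediately''; this is a harmless and correct refinement rather than a different method.
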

\begin{proof}
By the identity of $\mathcal{O}$-operator \eqref{ophommalcev},
$T([a,b]_{T})=T(a\cdot b-b\cdot a)=[T(a), T(b)].$
Thanks to \eqref{representation H-M}, for any $a,b,c,d\in V$,
\begin{align*}
&[\beta(b),\beta(c)]_{T } \cdot \beta(a \cdot d) + [[a ,b]_{T } , \beta(c)]_{T } \cdot \beta^{2}(d) + \beta^{2}(b) \cdot ([a , c]_{T} \cdot \beta (d))\\
 &\quad- \beta^{2}(a) \cdot (\beta (b)  \cdot (c \cdot d)) + \beta^{2}(c) \cdot (\beta (a) \cdot (b \cdot d))\\
  &= \rho(\alpha(T([b,c]_{T}))\rho(\alpha(T(a)))\beta(d)+\rho(T([[a, b]_{T},\beta(c)]_{T}))\beta^{2}(d)\\
  &\quad+\rho(\alpha^{2}(T(b)))\rho(T([a,c]_{T}))\beta(d)-  \rho(\alpha^{2}(T(a)))\rho(\alpha(T(b)))\rho(T(c))d\\
& \quad+\rho(\alpha^{2}(T(c)))\rho(\alpha(T(a)))\rho(T(b))d\\
 &= \rho(\alpha([T(b), T(c)]))\rho(\alpha(T(a)))\beta(d)+\rho([[T(a), T(b)], \alpha(T(c))])\beta^{2}(d)\\
  &\quad+\rho(\alpha^{2}(T(b)))\rho([T(a), T(c)])\beta(d)-\rho(\alpha^{2}(T(a)))\rho(\alpha(T(b)))\rho(T(c))d\\
  &\quad+\rho(\alpha^{2}(T(c)))\rho(\alpha(T(a)))\rho(T(b))d= 0.
\end{align*}
So, $(V,\cdot,\beta)$ is a Hom-pre-Malcev algebra. The other statements follow immediately.
\end{proof}
An obvious consequence of Proposition \ref{hommalcev==>hompremalcev} is the construction of a Hom-pre-Malcev algebra in terms of a Rota-Baxter operator of weight zero of a Hom-Malcev
algebra.
\begin{cor}
Let $\mathcal{R}: A \longrightarrow A$ is a Rota-Baxter operator on a Hom-Malcev algebra
$(A,[-,-],\alpha)$.  Then $(A, \cdot, \alpha)$ is a Hom-pre-Malcev algebra, where for all $x, y \in A,$
$$x \cdot y = [\mathcal{R}(x), y].$$
\end{cor}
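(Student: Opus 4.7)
The plan is to deduce this corollary directly from Proposition \ref{hommalcev==>hompremalcev} by recognizing a Rota-Baxter operator of weight zero as the specific instance of an $\mathcal{O}$-operator associated to the adjoint representation. Indeed, the example immediately following the definition of $\mathcal{O}$-operator already records that any Rota-Baxter operator of weight zero on $(A,[-,-],\alpha)$ satisfies the defining identity \eqref{ophommalcev} with $V = A$, $\beta = \alpha$ and $\rho = ad$, since the commutation condition $\mathcal{R}\circ\alpha = \alpha\circ\mathcal{R}$ matches $\alpha\circ T = T\circ\beta$, and the Rota-Baxter identity
\[
[\mathcal{R}(x),\mathcal{R}(y)] = \mathcal{R}\bigl([\mathcal{R}(x),y] + [x,\mathcal{R}(y)]\bigr)
\]
rewrites as $[\mathcal{R}(x),\mathcal{R}(y)] = \mathcal{R}\bigl(ad(\mathcal{R}(x))y - ad(\mathcal{R}(y))x\bigr)$.

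Next, I would invoke the construction of Proposition \ref{hommalcev==>hompremalcev} applied to $T = \mathcal{R}$ and the adjoint representation $(A, ad, \alpha)$. The induced product on $V = A$ is
\[
x \cdot y \;=\; \rho(T(x))y \;=\; ad(\mathcal{R}(x))y \;=\; [\mathcal{R}(x), y],
\]
which is precisely the product appearing in the statement of the corollary. Proposition \ref{hommalcev==>hompremalcev} then guarantees that $(A,\cdot,\alpha)$ is a Hom-pre-Malcev algebra, which is exactly what needs to be proven.

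Since the work has already been done in Proposition \ref{hommalcev==>hompremalcev}, no further computation with the identity \eqref{HPM} is required. There is essentially no obstacle here: the only thing one needs to verify is the identification of a weight-zero Rota-Baxter operator as an $\mathcal{O}$-operator relative to the adjoint representation, and that identification has already been made explicit in the preceding example. The proof is therefore a short two-line deduction.
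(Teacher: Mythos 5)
Your proposal is correct and is exactly the paper's intended argument: the paper presents this corollary as "an obvious consequence" of Proposition \ref{hommalcev==>hompremalcev}, relying on the earlier example identifying a weight-zero Rota-Baxter operator as an $\mathcal{O}$-operator for the adjoint representation $(A,ad,\alpha)$, so that the induced product $\rho(T(x))y = ad(\mathcal{R}(x))y = [\mathcal{R}(x),y]$ is the one in the statement. No further computation is needed, just as you say.
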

\begin{ex}
\label{ex:4dmalcev}
There is a four-dimensional Malcev algebra $(A,[-,-])$  with multiplication table {\rm \cite[Example 3.1]{Sagle}} for a basis $\{e_1,e_2,e_3,e_4\}$,
\begin{center}
\begin{tabular}{c|cccc}
$[-,-]$ & $e_1$ & $e_2$ & $e_3$ & $e_4$ \\ \hline
$e_1$ & $0$ & $-e_2$ & $-e_3$ & $e_4$ \\
$e_2$ & $e_2$ & $0$ & $2e_4$ & $0$ \\
$e_3$ & $e_3$ & $-2e_4$ & $0$ & $0$ \\
$e_4$ & $-e_4$ & $0$ & $0$ & $0$ \\
\end{tabular}
\end{center}
Let $\mathcal{R}$ be the operator defined, with respect to the basis $\{e_1,e_2,e_3, e_{4}\}$, by
\begin{align*}
    \mathcal{R}(e_1)= e_1 + \frac{a_{4}}{2}e_4,\ \mathcal{R}(e_2)=\lambda_1 e_3,\ \mathcal{R} (e_3)= \mathcal{R}(e_4) = 0,
\end{align*}
where $a_4$ and $\lambda_1$ are parameters in $\mathbb{K}$. By a direct computation, we can verify that $\mathcal{R}$ is a Rota-Baxter operator on $A$.

Now, using  the previous corollary,   there is a pre-Malcev algebra structure
on $A$ with the multiplication $"\cdot"$ given for all $x,y \in A$ by
$x\cdot y= [\mathcal{R}(x),  y]$, that is
$$
\begin{array}{c|cccc}
  \cdot  & e_1 & e_2 & e_3 &  e_4 \\   \hline
  e_1& \frac{-a_4}{2} e_4 & -e_2  & - e_3 & e_4 \\
  e_2 & \lambda_1 e_3 & -2\lambda_{1} e_4 &  0 & 0\\
  e_3 & 0 & 0  & 0 & 0\\
  e_4 & 0 & 0  & 0 & 0
  \end{array}$$
Using suitable algebra morphism $\alpha$, we can twist the Malcev algebra $A$ into Hom-Malcev algebras.
With a bit of computation, one can check that one class of algebra morphisms $\alpha \colon A \to A$ is given by
\[
\alpha(e_1) = e_1 + a_4e_4,\quad
\alpha(e_2) = -e_2 + b_3e_3,\quad
\alpha(e_3) = -e_3,\quad
\alpha(e_4) = -e_4,\quad
\]
where $a_4$ and $b_3$ are arbitrary scalars in $\mathbb{K}$.

There is a Hom-Malcev algebra $A_{\alpha} = (A,[-,-]_{\alpha} = \alpha\circ[-,-],\alpha)$
with multiplication table
\begin{center}
\begin{tabular}{c|cccc}
$[-,-]_{\alpha}$ & $e_1$ & $e_2$ & $e_3$ & $e_4$ \\ \hline
$e_1$ & $0$ & $-\alpha(e_2)$ & $e_3$ & $-e_4$ \\
$e_2$ & $\alpha(e_2)$ & $0$ & $ -2e_4$ & $0$ \\
$e_3$ & $-e_3$ & $2e_4$ & $0$ & $0$ \\
$e_4$ & $e_4$ & $0$ & $0$ & $0$ \\
\end{tabular}
\end{center}
Then we can check that $\mathcal{R}$ is a Rota-Baxter operator on Hom-Malcev algebra $A_{\alpha}$. Therefore, there exists a Hom-pre-Malcev algebra $ A_\alpha=(A,\cdot_{\alpha} ,\alpha)$  where  the  multiplication $"\cdot_{\alpha}"$ is  given by
$x\cdot_{\alpha} y = \alpha([\mathcal{R}(x), y])$ for all
$x,y \in A,$ that is
$$
\begin{array}{c|cccc}
  \cdot_{\alpha}  & e_1 & e_2 & e_3 &  e_4 \\ \hline
  e_1& \frac{a_4}{2} e_4 & -\alpha(e_2)  & e_3 & -e_4 \\
  e_2 & -\lambda_1 e_3 & 2 \lambda_1 e_4 &  0 & 0\\
  e_3 & 0 & 0  & 0 & 0\\
  e_4 &  0 & 0  & 0 & 0
  \end{array}$$
\end{ex}
\begin{ex}
\label{ex:5dmalcev}
There is a five-dimensional  Malcev algebra $(A,[-,-])$ with multiplication table {\rm \cite[Example 3.4]{Sagle}} for a basis $\{e_1,e_2,e_3,e_4,e_5\}$,
\begin{center}
\begin{tabular}{c|ccccc}
$[-,-]$ & $e_1$ & $e_2$ & $e_3$ & $e_4$ & $e_5$ \\ \hline
$e_1$ & $0$ & $0$ & $0$ & $e_2$ & $0$ \\
$e_2$ & $0$ & $0$ & $0$ & $0$ & $e_3$ \\
$e_3$ & $0$ & $0$ & $0$ & $0$ & $0$ \\
$e_4$ & $-e_2$ & $0$ & $0$ & $0$ & $0$ \\
$e_5$ & $0$ & $-e_3$ & $0$ & $0$ & $0$
\end{tabular}
\end{center}
Let $\mathcal{R}$ be the operator defined, with respect to the basis $\{e_1,e_2,e_3, e_{4}, e_{5}\}$, by
\begin{align*}
    \mathcal{R}(e_1)= e_1 + a_{4}e_4 + a_{5}e_{5},\ \mathcal{R}(e_2)=b e_3,\  \mathcal{R}(e_3)= \mathcal{R}(e_5) = 0, \mathcal{R}(e_4)= \frac{-b}{a_{5}}e_{2},
\end{align*}
where $b$, $a_4$ and $a_{5}$ are parameters in $\mathbb{K}$. By a direct computation, we can verify that $\mathcal{R}$ is a Rota-Baxter operator on $A$.

Now, using  the previous corollary,   there is a pre-Malcev algebra structure $"\cdot"$
on $A$  given for all $x,y \in A$ by
$x\cdot y= [\mathcal{R}(x),  y], $ that is
$$
\begin{array}{c|ccccc}
  \cdot  & e_1 & e_2 & e_3 &  e_4 & e_5\\  \hline
  e_1& -a_4 e_2 & -a_5e_3  & 0 & e_2 & 0 \\
  e_2 & 0  & 0 &  0 & 0 & 0\\
  e_3 & 0 & 0  & 0 & 0 & 0 \\
  e_4 & 0 & 0  & 0 & 0  & \frac{-b}{a_5}e_3\\
  e_5 & 0 & 0  & 0 & 0 & 0
  \end{array}$$
Using suitable algebra morphism $\alpha$ given by
\[
\alpha(e_1) = e_1 ,\quad
\alpha(e_2) = e_2, \quad
\alpha(e_3) = e_3, \quad
\alpha(e_4) = \lambda_2e_3 + e_4,\quad
\alpha(e_5) = \frac{a_4}{a_5}\lambda_2e_3 + e_5,
\]where $\lambda_2$, $a_4$ and $a_5$ are arbitrary scalars in $\mathbb{K}$, we can twist the Malcev algebra $A$ into Hom-Malcev algebra $A_{\alpha} = (A,[-,-]_{\alpha} = \alpha\circ[-,-],\alpha)$
with multiplication table
\begin{center}
\begin{tabular}{c|ccccc}
$[-,-]_{\alpha}$ & $e_1$ & $e_2$ & $e_3$ & $e_4$ &$e_5$ \\ \hline
$e_1$ & $0$ & $0$ & $0$ & $e_2$ & $0$\\
$e_2$ & $0$ & $0$ & $ 0$ & $0$ & $e_3$ \\
$e_3$ & $0$ & $0$ & $0$ & $0$ & $0$ \\
$e_4$ & $-e_2$ & $0$ & $0$ & $0$ & $0$\\
$e_5$ &$0$ & $-e_3$ & $0$ & $0$ & $0$
\end{tabular}
\end{center}
Then we can check that $\mathcal{R}$ is a Rota-Baxter operator on $A$. Thus there exists a Hom-pre-Malcev algebras $A_\alpha=(A,\cdot_{\alpha} ,\alpha)$ with a multiplication $"\cdot_{\alpha}"$ given by
$x\cdot_{\alpha} y = \alpha([\mathcal{R}(x), y])$ for all $x,y \in A,$
 that is
$$
\begin{array}{c|ccccc}
   \cdot_{\alpha}  & e_1 & e_2 & e_3 &  e_4 & e_5\\         \hline
  e_1& -a_4 e_2 & -a_5e_3  & 0 & e_2 & 0 \\
  e_2 & 0  & 0 &  0 & 0 & 0\\
  e_3 & 0 & 0  & 0 & 0 & 0 \\
  e_4 & 0 & 0  & 0  & 0 & \frac{-b}{a_5}e_3\\
 e_5 & 0 & 0  & 0 & 0 & 0
  \end{array}$$
\end{ex}
\begin{prop}\label{pro:nsc}
   Let $(A, [-,-],\alpha)$ be a Hom-Malcev algebra. Then there exists  a compatible Hom-pre-Malcev algebra structure on $A$ if and only if there is an invertible $\mathcal{O}$-operator on $A$.
\end{prop}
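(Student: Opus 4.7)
The proof splits naturally into the two implications of the biconditional, and I would set it up around the left multiplication $L_\cdot\colon A\to \mathrm{End}(A)$, $L_\cdot(x)y=x\cdot y$, which is the bridge between Hom-pre-Malcev structures and $\mathcal{O}$-operators.

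For the forward direction, assume $(A,\cdot,\alpha)$ is a compatible Hom-pre-Malcev structure on $(A,[-,-],\alpha)$. The first step is to verify that $L_\cdot$ is a representation of $(A,[-,-],\alpha)$ on $A$ with respect to $\alpha$. Condition \eqref{rephommalcev}, namely $L_\cdot(\alpha(x))\alpha=\alpha L_\cdot(x)$, is exactly the multiplicativity $\alpha(x\cdot y)=\alpha(x)\cdot\alpha(y)$. Condition \eqref{representation H-M} is obtained by reading the defining identity \eqref{HPM} in operator form: rewriting \eqref{HPM} as
\[
L_\cdot([[x,y],\alpha(z)])\alpha^2(t)=L_\cdot(\alpha^2(x))L_\cdot(\alpha(y))L_\cdot(z)t-L_\cdot(\alpha^2(z))L_\cdot(\alpha(x))L_\cdot(y)t-L_\cdot(\alpha^2(y))L_\cdot([x,z])\alpha(t)-L_\cdot(\alpha([y,z]))L_\cdot(\alpha(x))\alpha(t),
\]
and using $[x,z]=-[z,x]$ together with multiplicativity $\alpha([y,z])=[\alpha(y),\alpha(z)]$, matches \eqref{representation H-M} with $\rho=L_\cdot$ and $\beta=\alpha$. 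Once this is in place, I would take $T=\mathrm{id}_A$ and check that it is an invertible $\mathcal{O}$-operator associated to $(A,L_\cdot,\alpha)$: the covariance $\alpha\circ T=T\circ\alpha$ is immediate, while
\[
[T(x),T(y)]=[x,y]=x\cdot y-y\cdot x=L_\cdot(T(x))y-L_\cdot(T(y))x=T\bigl(L_\cdot(T(x))y-L_\cdot(T(y))x\bigr),
\]
which is exactly \eqref{ophommalcev}.

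For the reverse direction, let $T\colon V\to A$ be an invertible $\mathcal{O}$-operator associated to a representation $(V,\rho,\beta)$. By Proposition \ref{hommalcev==>hompremalcev}, the product $a\cdot_V b=\rho(T(a))b$ makes $(V,\cdot_V,\beta)$ into a Hom-pre-Malcev algebra. I would transport this structure to $A$ along the bijection $T$ by setting
\[
x\cdot y\;=\;T\bigl(T^{-1}(x)\cdot_V T^{-1}(y)\bigr)\;=\;T\bigl(\rho(x)T^{-1}(y)\bigr),\qquad x,y\in A.
\]
Since $T$ is a bijective intertwiner ($\alpha T=T\beta$, hence $T^{-1}\alpha=\beta T^{-1}$), it is an isomorphism of Hom-algebras, so $(A,\cdot,\alpha)$ automatically satisfies the Hom-pre-Malcev identity \eqref{HPM} and the multiplicativity $\alpha(x\cdot y)=\alpha(x)\cdot\alpha(y)$. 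The remaining point is compatibility with $[-,-]$: using the $\mathcal{O}$-operator identity,
\[
x\cdot y-y\cdot x=T\bigl(\rho(x)T^{-1}(y)-\rho(y)T^{-1}(x)\bigr)=T\bigl(\rho(T(T^{-1}(x)))T^{-1}(y)-\rho(T(T^{-1}(y)))T^{-1}(x)\bigr)=[T(T^{-1}(x)),T(T^{-1}(y))]=[x,y],
\]
so the associated Hom-Malcev structure of $(A,\cdot,\alpha)$ is indeed $[-,-]$.

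The principal obstacle is the operator-level translation of \eqref{HPM} into the representation axiom \eqref{representation H-M}: all five terms of HPM must be read as compositions of $L_\cdot$'s and $\alpha$'s and matched against the four terms on the right of \eqref{representation H-M}, with the minus signs and the bracket $[x,z]=-[z,x]$ properly accounted for. Everything else — the $\mathcal{O}$-operator check for $T=\mathrm{id}_A$, and the transport of structure along an invertible $T$ in the converse — is essentially formal once Proposition \ref{hommalcev==>hompremalcev} is available.
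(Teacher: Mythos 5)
Your proof is correct and follows essentially the same route as the paper: the forward direction takes $T=\mathrm{id}_A$ as an invertible $\mathcal{O}$-operator for the left-multiplication representation $L_\cdot$ (the paper's proof writes ``associated to $(A,ad)$'', which is evidently a slip for $L_\cdot$, so your version is the cleaner one), and the reverse direction transports the structure of Proposition \ref{hommalcev==>hompremalcev} along the invertible $T$ via $x\cdot y=T(\rho(x)T^{-1}(y))$ and checks compatibility exactly as the paper does.
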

\begin{proof}
Let $(A,\cdot,\alpha)$ be a Hom-pre-Malcev algebra and $(A,[-,-],\alpha)$ be the associated Malcev algebra.  Then the identity map $id: A \to A$ is an invertible $\mathcal{O}$-operator on  $A$  associated to $(A,ad)$.

Conversely, suppose that there exists an invertible $\mathcal{O}$-operator $T$  of $(A,[-,-])$  associated to $(V, \rho,\beta)$. Then, using Proposition \ref{hommalcev==>hompremalcev}, there is a Hom-pre-Malcev algebra structure on $T(V)=A$ given for all $a, b\in V$ by
\begin{equation*}
   T(a)\cdot T(b)=T(\rho(T(a))b).
\end{equation*}
If we set $x=T(a)$ and $y=T(b)$, then we get
\begin{equation*}
   x\cdot y=T(\rho(x)T^{-1}(y)).
\end{equation*}
This is compatible Hom-pre-Malcev algebra structure  on $(A,[-,-],\alpha)$. Indeed,
\begin{align*}
 x\cdot y-y\cdot x&= T(\rho(x)T^{-1}(y) - \rho(y)T^{-1}(x) )\\
&= [TT^{-1}(x),TT^{-1}(y)]=[x,y].
\end{align*}
The proof is finished.
\end{proof}

 Let $(A,[-,-]\alpha)$ be an any Hom-algebra and   $\omega\in\wedge^2A^*$. Recall that $\omega$ is a symplectic structure on $A$ if it satisfies
\begin{align*}
    &\omega(\alpha(x),\alpha(y)) =\omega(x,y),\quad \quad  \displaystyle\circlearrowleft_{x,y,z}\omega([x,y],\alpha(z)) =0.
\end{align*}
A Hom-Malcev algebra
$(A,[-,-], \alpha)$ with a symplectic form is called a symplectic Hom-Malcev algebra.

\begin{cor}
Let $(A,[-,-],\alpha )$ be a  Hom-Malcev algebra and  $\omega$ be a symplectic structure on  $(A,[-,-],\alpha)$.
Then there is a compatible Hom-pre-Malcev algebra structure on $A$ given by
$$
 \omega(x\cdot y,\alpha(z))=\omega(\alpha(y),[z,x] ).
$$
\end{cor}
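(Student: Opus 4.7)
The plan is to construct from $\omega$ an invertible $\mathcal{O}$-operator on $(A,[-,-],\alpha)$ and then apply Proposition \ref{pro:nsc}. Since $\omega$ is non-degenerate, the map $\omega^{\flat}:A\to A^{*}$ defined by $\omega^{\flat}(x)(y):=\omega(x,y)$ is a linear isomorphism, so I set $T:=(\omega^{\flat})^{-1}:A^{*}\to A$ and claim that $T$ is an $\mathcal{O}$-operator associated to the coadjoint representation $(A^{*},ad^{\star},(\alpha^{-1})^{*})$ of Corollary \ref{lem:rep}.

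For the twisting equivariance $\alpha\circ T=T\circ(\alpha^{-1})^{*}$, applying $\omega^{\flat}$ and writing $x=T(\xi)$ reduces the claim to $\omega(\alpha(x),y)=\omega(x,\alpha^{-1}(y))$, which is an immediate consequence of $\omega(\alpha(u),\alpha(v))=\omega(u,v)$. For the main $\mathcal{O}$-operator identity $[T(\xi),T(\eta)]=T\big(ad^{\star}(T(\xi))\eta-ad^{\star}(T(\eta))\xi\big)$, I set $x=T(\xi)$, $y=T(\eta)$, apply $\omega^{\flat}$, and evaluate at $\alpha(z)$. Expanding the coadjoint action via formula \eqref{eq:new1gen} and repeatedly using the $\alpha$-invariance of $\omega$, the required equality becomes
\begin{equation*}
\omega([x,y],\alpha(z))=\omega(\alpha(x),[y,z])-\omega(\alpha(y),[x,z]),
\end{equation*}
which is exactly the cyclic symplectic condition $\circlearrowleft_{x,y,z}\omega([x,y],\alpha(z))=0$ together with antisymmetry of $\omega$ and of $[-,-]$.

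Having established that $T$ is an invertible $\mathcal{O}$-operator, Proposition \ref{hommalcev==>hompremalcev} (via Proposition \ref{pro:nsc}) supplies a compatible Hom-pre-Malcev algebra structure on $A\cong T(A^{*})$ given by $x\cdot y:=T\big(ad^{\star}(x)\omega^{\flat}(y)\big)$. To match this with the stated formula, I apply $\omega^{\flat}$ to the definition and evaluate at $\alpha(z)$; using \eqref{eq:new1gen} one gets
\begin{equation*}
\omega(x\cdot y,\alpha(z))=\big(ad^{\star}(x)\omega^{\flat}(y)\big)(\alpha(z))=-\omega\big(y,[\alpha^{-1}(x),\alpha^{-1}(z)]\big),
\end{equation*}
and a final application of $\alpha$-invariance of $\omega$ combined with antisymmetry of $[-,-]$ yields $\omega(x\cdot y,\alpha(z))=\omega(\alpha(y),[z,x])$, as claimed.

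The main obstacle is the book-keeping in the verification of the operator identity: one has to track the factors $\alpha^{-1}$ and $\alpha^{-2}$ that enter through the coadjoint action \eqref{eq:new1gen} and systematically exploit $\omega(\alpha(u),\alpha(v))=\omega(u,v)$ to normalize every term into the form $\omega([-,-],\alpha(-))$ before the cyclic identity for $\omega$ can be applied.
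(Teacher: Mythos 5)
Your proposal is correct and follows essentially the same route as the paper: you realize $(\omega^{\flat})^{-1}$ as an invertible $\mathcal{O}$-operator for the coadjoint representation $(A^{*},ad^{\star},(\alpha^{-1})^{*})$ and then invoke Proposition \ref{pro:nsc}. The only difference is that you actually carry out the verification (reducing the operator identity to the cyclic condition $\circlearrowleft_{x,y,z}\omega([x,y],\alpha(z))=0$ and deriving the explicit product formula), whereas the paper asserts these steps without computation.
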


 \begin{proof}
   Since $(A,[-,-],\alpha )$ is a  Hom-Malcev algebra, $(A^*,ad^\star,(\alpha^{-1})^*)$ is a representation of $A$. By the fact that $\omega$ is a symplectic structure, $(\omega^\sharp)^{-1}$ is an $\mathcal{O}$-operator on the  Hom-Malcev algebra $(A,[-,-],\alpha )$ with respect to the representation $(A^*,ad^\star,(\alpha^{-1})^*)$. Thus, $(\omega^\sharp)^{-1}$ is an $\mathcal{O}$-operator on the   Hom-Malcev algebra $(A,[-,-] ,\alpha)$ with respect to the representation $(A^*,ad^\star,(\alpha^{-1})^*)$. By Proposition \ref{pro:nsc}, there is a compatible Hom-pre-Malcev algebra structure on $A$ given as above.
 \end{proof}
Hom-pre-Malcev algebras are related to Hom-pre-alternative algebras analogously
to how Hom-pre-Lie algebras are related to Hom-dendriform algebras \cite{MakhloufHomdemdoformRotaBaxterHomalg2011}.
\begin{defn}[\cite{Q.Sun}]
A Hom-pre-alternative algebra is a quadruple $(A,\prec,\succ,\alpha)$, where $A$ is a vector space,
$\prec,\succ: A \otimes A \longrightarrow A$
are bilinear maps and $\alpha\in gl(A)$  satisfying
for all $x, y, z \in A$ and  $x \ast y = x \prec y + x\succ y$,
\begin{align}
 &(x\succ y) \prec \alpha(z) - \alpha(x)\succ(y \prec z) + (y \prec x) \prec\alpha(z) - \alpha(y) \prec (x \ast z) = 0,\\
&(x\succ y)\prec\alpha(z) - \alpha(x)\succ(y \prec z) + (x \ast z)\succ \alpha(y) - \alpha(x)\succ(z\succ y) = 0,\\
&(x\ast y) \succ \alpha(z) - \alpha(x)\succ(y \succ z) + (y \ast x) \succ \alpha(z) - \alpha(y) \succ (x \succ z) = 0,\\
&(x\prec y)\prec\alpha(z) - \alpha(x)\prec(y \ast z) + (x \prec z)\prec \alpha(y) - \alpha(x)\prec(z\ast y) = 0.
\end{align}
\end{defn}
\begin{prop}
Let $(A,\prec,\succ, \alpha)$ be a Hom-pre-alternative algebra. Then $(A,\ast, \alpha)$ is a Hom-alternative algebra
\end{prop}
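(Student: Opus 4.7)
The plan is to verify the two Hom-alternative identities $as_\alpha(x,x,z)=0$ and $as_\alpha(y,x,x)=0$ for $\ast=\prec+\succ$ by directly expanding the Hom-associator and matching the eight resulting summands against suitable diagonal specializations of the four defining axioms of a Hom-pre-alternative algebra. Concretely, expanding $\ast=\prec+\succ$ inside
\[
as_\alpha(x,y,z)=(x\ast y)\ast \alpha(z)-\alpha(x)\ast(y\ast z)
\]
produces four $(\cdot)\prec\alpha(z)$-type and four $(\cdot)\succ\alpha(z)$-type terms on the left, matched with four $\alpha(x)\prec(\cdot)$-type and four $\alpha(x)\succ(\cdot)$-type terms on the right. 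The strategy is to group these into cancelling pairs using the axioms at repeated arguments.

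For \emph{left} Hom-alternativity, I would specialize the first axiom at $y=x$ to obtain
\[
(x\ast x)\prec\alpha(z)=\alpha(x)\succ(x\prec z)+\alpha(x)\prec(x\ast z),
\]
and specialize the third axiom at $y=x$, where the two halves collapse to give
\[
(x\ast x)\succ\alpha(z)=\alpha(x)\succ(x\succ z).
\]
Substituting these two identities into $as_\alpha(x,x,z)$, the $\alpha(x)\prec(x\ast z)$ terms cancel directly, and the remaining $\succ$-summands combine via $\alpha(x)\succ(x\prec z)+\alpha(x)\succ(x\succ z)=\alpha(x)\succ(x\ast z)$, which matches the final unused summand.

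For \emph{right} Hom-alternativity, I would specialize the fourth axiom with the renaming $(x,y,z)\mapsto(y,x,x)$; the two halves coincide and yield
\[
(y\prec x)\prec\alpha(x)=\alpha(y)\prec(x\ast x),
\]
which disposes of the two $\prec$-type summands of $as_\alpha(y,x,x)$ and reduces the remaining check to
\[
(y\succ x)\prec\alpha(x)+(y\ast x)\succ\alpha(x)-\alpha(y)\succ(x\ast x)=0.
\]
This is precisely the specialization of the second axiom at $(x,y,z)\mapsto(y,x,x)$, after rewriting $\alpha(y)\succ(x\prec x)+\alpha(y)\succ(x\succ x)$ as $\alpha(y)\succ(x\ast x)$.

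The main obstacle is purely notational bookkeeping. The four axioms of a Hom-pre-alternative algebra are partially redundant under diagonalization—axioms (I3) and (I4) each have two halves that coincide when a pair of entries is set equal, while axioms (I1) and (I2) retain their full content. The only difficulty is to select the correct renaming of variables in each axiom so that the eight terms of the Hom-associator pair up exactly; no tool beyond $\ast=\prec+\succ$ and the axioms themselves is needed, and multiplicativity of $\alpha$ plays no role in the argument.
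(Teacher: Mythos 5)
Your proof is correct. The paper itself states this proposition without proof (it is essentially imported from the Hom-pre-alternative literature, cf.\ \cite{Q.Sun}), so there is no in-paper argument to compare against; your direct verification is exactly the natural one and all the specializations check out. Concretely: setting $y=x$ in the first axiom and summing the two $\prec\alpha(z)$ terms gives $(x\ast x)\prec\alpha(z)=\alpha(x)\succ(x\prec z)+\alpha(x)\prec(x\ast z)$, the third axiom at $y=x$ duplicates itself and (in characteristic $0$, as the paper assumes) yields $(x\ast x)\succ\alpha(z)=\alpha(x)\succ(x\succ z)$, and these two combine with $\alpha(x)\succ(x\prec z)+\alpha(x)\succ(x\succ z)=\alpha(x)\succ(x\ast z)$ to kill $as_\alpha(x,x,z)$; similarly the fourth and second axioms under $(x,y,z)\mapsto(y,x,x)$ kill $as_\alpha(y,x,x)$. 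The only implicit step worth flagging is the division by $2$ when the two halves of axioms three and four coincide, which is harmless over a field of characteristic $0$ but means the diagonal identities are strictly weaker than the linearized axioms in characteristic $2$; you are also right that multiplicativity of $\alpha$ is never used.
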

Now, we consider a Hom-alternative algebra $(A,\ast,\alpha)$, a vector space $V$ and a linear map $\beta:V\to V$. Recall that, a bimodule of $A$ with respect to $\beta$ is given by linear maps $\mathfrak{l},\mathfrak{r}:A\to End(V)$ satisfying the following conditions:
\begin{align}
\label{rephomalt1}\mathfrak{l}(x^2)\beta &=\mathfrak{l}(\alpha(x))\mathfrak{l}(x),\\
\label{rephomalt2}\mathfrak{r}(x^2)\beta &=\mathfrak{r}(\alpha(x))\mathfrak{r}(x),\\
\label{rephomalt3}\mathfrak{r}(\alpha(y))\mathfrak{l}(x)-\mathfrak{l}(\alpha(x))\mathfrak{r}(y)&=\mathfrak{r}(x\ast y)\beta-\mathfrak{r}(\alpha(y))\mathfrak{r}(x),\\
\label{rephomalt4} \mathfrak{l}(y\ast x)\beta-\mathfrak{l}(\alpha(y))\mathfrak{l}(x)&=\mathfrak{l}(\alpha(y))\mathfrak{r}(x)-\mathfrak{r}(\alpha(x))\mathfrak{l}(y).
\end{align}
\begin{defn}
An $\mathcal{O}$-operator of Hom-alternative algebra $(A,\ast,\alpha)$ with respect to the bimodule $(V,\mathfrak{l},\mathfrak{r},\beta)$ is a linear map $T:V\to A$ such that,
for all $a, b \in V$,
\begin{equation}
\label{O-ophomalternative} \alpha\circ T =  T\circ\beta ~~\text{and}~~T (a)\ast T (b) = T \big(\mathfrak{l}(T (a))b + \mathfrak{r}(T (b))a\big).
 \end{equation}
 \end{defn}
 \begin{rem}
Rota-Baxter operator of weight $0$ on a Hom-alternative algebra $(A,\ast, \alpha)$ is
just an $\mathcal{O}$-operator associated to the bimodule $(A,L, R,\alpha)$, where $L$ and $R$ are the left and right
multiplication operators corresponding to the multiplication $\ast$.
 \end{rem}
\begin{prop}
With the above notations, the triplet $(V,\mathfrak{l}-\mathfrak{r},\beta)$ defines a module of the Hom-Malcev admissible algebra $(A,[-,-],\alpha)$, and $T$ is an $\mathcal{O}$-operator of $(A,[-,-],\alpha)$ with respect to $(V,\mathfrak{l}-\mathfrak{r},\beta)$.
\end{prop}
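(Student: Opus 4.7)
The plan is to deduce both assertions by lifting the bimodule structure to a semi-direct product on $A\oplus V$ and then invoking the Hom-Malcev admissibility of Hom-alternative algebras, which is already available in the paper. The key observation is that the four bimodule identities \eqref{rephomalt1}--\eqref{rephomalt4}, together with the standing compatibilities $\mathfrak{l}(\alpha(x))\beta=\beta\mathfrak{l}(x)$ and $\mathfrak{r}(\alpha(x))\beta=\beta\mathfrak{r}(x)$ implicit in the notion of a bimodule with twisting $\beta$, are precisely what is needed so that the vector space $A\oplus V$ equipped with
\[
(x+a)\bar{\ast}(y+b)=x\ast y+\mathfrak{l}(x)b+\mathfrak{r}(y)a,\qquad (\alpha+\beta)(x+a)=\alpha(x)+\beta(a),
\]
forms a Hom-alternative algebra. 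I would record this Hom-alternative semi-direct product as a short preparatory claim; its verification is the usual unpacking of $as_{\alpha+\beta}(x+a,x+a,y+b)=0$ and $as_{\alpha+\beta}(y+b,x+a,x+a)=0$ into \eqref{rephomalt1}--\eqref{rephomalt4}.

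Next, I would apply the theorem that any Hom-alternative algebra is Hom-Malcev admissible to $(A\oplus V,\bar{\ast},\alpha+\beta)$, producing a Hom-Malcev bracket $\overline{[-,-]}$ on $A\oplus V$. A one-line computation of the commutator gives
\[
\overline{[x+a,y+b]}=[x,y]+(\mathfrak{l}-\mathfrak{r})(x)b-(\mathfrak{l}-\mathfrak{r})(y)a,
\]
which is exactly the semi-direct Hom-Malcev bracket of Proposition \ref{semidirectprduct HomMalcev} associated to the candidate module $(V,\mathfrak{l}-\mathfrak{r},\beta)$. Since that proposition is an ``if and only if'' statement, the fact that $\overline{[-,-]}$ defines a Hom-Malcev algebra on $A\oplus V$ forces $(V,\mathfrak{l}-\mathfrak{r},\beta)$ to be a representation of the associated Hom-Malcev algebra $(A,[-,-],\alpha)$. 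This settles the first assertion without ever expanding the Hom-Malcev representation identity \eqref{representation H-M} by hand.

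For the second assertion, I would compute directly from \eqref{O-ophomalternative}:
\[
T(a)\ast T(b)=T\bigl(\mathfrak{l}(T(a))b+\mathfrak{r}(T(b))a\bigr),\qquad T(b)\ast T(a)=T\bigl(\mathfrak{l}(T(b))a+\mathfrak{r}(T(a))b\bigr).
\]
Subtracting yields
\[
[T(a),T(b)]=T\bigl((\mathfrak{l}-\mathfrak{r})(T(a))b-(\mathfrak{l}-\mathfrak{r})(T(b))a\bigr),
\]
which is exactly the $\mathcal{O}$-operator condition \eqref{ophommalcev} for $T$ with respect to the representation $(V,\mathfrak{l}-\mathfrak{r},\beta)$ of $(A,[-,-],\alpha)$; the equivariance $\alpha\circ T=T\circ\beta$ is carried over verbatim from the Hom-alternative $\mathcal{O}$-operator datum.

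The main obstacle is the bookkeeping in the first step: one must match the four Hom-alternative bimodule identities with Hom-alternativity of $\bar{\ast}$ on mixed triples of elements from $A$ and $V$. A brute-force alternative, substituting $\rho=\mathfrak{l}-\mathfrak{r}$ directly into \eqref{representation H-M} and expanding via \eqref{rephomalt1}--\eqref{rephomalt4}, would balloon into a long symbolic computation, whereas the semi-direct product route packages the verification neatly into the already-established Hom-Malcev-admissibility theorem.
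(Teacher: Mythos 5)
Your proposal is correct and follows essentially the same route as the paper: form the Hom-alternative semi-direct product on $A\oplus V$, pass to its Hom-Malcev admissible commutator algebra, identify the resulting bracket with the semi-direct bracket of Proposition \ref{semidirectprduct HomMalcev} to get the representation, and verify the $\mathcal{O}$-operator identity by subtracting the two instances of \eqref{O-ophomalternative}. The only cosmetic difference is that you flag the Hom-alternativity of the semi-direct product as a claim to be checked against \eqref{rephomalt1}--\eqref{rephomalt4}, whereas the paper simply asserts it.
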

\begin{proof}
First, note that  $(V,\mathfrak{l}, \mathfrak{r},\beta)$ is a representation of a Hom-alternative algebra $A$ if and only if the direct sum $(A \oplus V,\star,\alpha+\beta)$ of vector spaces is a Hom-alternative algebra (the semi-direct product) by defining multiplication in $A\oplus V$ by
$$(x+a)\star (y+b)= x\ast y+\mathfrak{l}(x)b+ \mathfrak{r}(y)a,\ \ \forall\ x,y\in A, \ a,b \in V.$$
Next, for its associated Hom-Malcev admissible algebra $(A \oplus V, \overbrace{[-, -]}, \alpha+\beta)$,
\begin{align*}
 \overbrace{[x+a, y+b]}=&(x+a)\star (y+b) -(y+b)\star (x+a)\\
=& x\ast y+ \mathfrak{l}(x)b+ \mathfrak{r}(y)a - y\ast x -\mathfrak{l}(y)a - \mathfrak{r}(x)b \\
=&[x, y] + (\mathfrak{l}-\mathfrak{r})(x)b - (\mathfrak{l} -\mathfrak{r})(y)a.
\end{align*}
According to Proposition \ref{semidirectprduct HomMalcev}, $(V,\mathfrak{l}-\mathfrak{r},\beta)$ is a representation of $(A,[-, -],\alpha)$.
Moreover, $T$ is an $\mathcal{O}$-operator of $(A,[-,-],\alpha)$ with respect to $(V,\mathfrak{l}-\mathfrak{r},\beta)$ since
\begin{align*}
[T(a),T(b)]=&T(a)\ast T(b)-T(b)\ast T(a)\\
=&T \big(\mathfrak{l}(T (a))b + \mathfrak{r}(T (b))a\big)-T \big(\mathfrak{l}(T (b))a + \mathfrak{r}(T (a))b\big)\\
=&T \big((\mathfrak{l}-\mathfrak{r})(T (a))b - (\mathfrak{l}-\mathfrak{r})(T (b))a\big).
\qedhere \end{align*}
\end{proof}
\begin{thm}\label{Hom-pre-altToHom-Pre-Malcev}
    Let $T:V\to A$ be an $\mathcal{O}$-operator of Hom-alternative algebra $(A,\ast,\alpha)$ with respect to the bimodule $(V,\mathfrak{l},\mathfrak{r},\beta)$. Then $(V,\prec,\succ, \beta)$ be a Hom-pre-alternative algebra, where for all $a,b\in V$,
\begin{equation}
  \label{homalt==>prehomalt} a\succ b= \mathfrak{l}(T (a))b \ \ \text{and}\ \ a\prec b= \mathfrak{r}(T (b))a.
\end{equation}
 Moreover, if $(V,\cdot,\beta)$ is the  Hom-pre-Malcev  algebra associated to the  Hom-Malcev admissible algebra $(A,[-,-],\alpha)$ on the module $(V,\mathfrak{l}-\mathfrak{r},\beta)$, then $a\cdot b=a\succ b-b\prec a$.

\end{thm}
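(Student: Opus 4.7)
The proof splits into two independent verifications, both reducing to algebraic manipulations using the $\mathcal{O}$-operator equation $T(a)\ast T(b)=T(\mathfrak{l}(T(a))b+\mathfrak{r}(T(b))a)$, the commutation $\alpha\circ T=T\circ\beta$, and the four bimodule identities \eqref{rephomalt1}--\eqref{rephomalt4}. I will present one representative computation; the remaining ones follow the same pattern.

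For the first part, my plan is to check each of the four Hom-pre-alternative axioms by substituting the definitions $a\succ b=\mathfrak{l}(T(a))b$ and $a\prec b=\mathfrak{r}(T(b))a$, pushing $\alpha$ across $T$ to produce expressions of the form $\mathfrak{l}(\alpha T(-))$ and $\mathfrak{r}(\alpha T(-))$, and using $T(a\ast b)=T(a)\ast T(b)$ (which follows from the $\mathcal{O}$-operator property together with $a\ast b=\mathfrak{l}(T(a))b+\mathfrak{r}(T(b))a$) to handle terms involving $\ast$. For instance, the first axiom transforms into
\begin{equation*}
\bigl(\mathfrak{r}(\alpha T(c))\mathfrak{l}(T(a))-\mathfrak{l}(\alpha T(a))\mathfrak{r}(T(c))+\mathfrak{r}(\alpha T(c))\mathfrak{r}(T(a))-\mathfrak{r}(T(a)\ast T(c))\beta\bigr)(b)=0,
\end{equation*}
which is precisely the bimodule relation \eqref{rephomalt3} applied to $x=T(a)$, $y=T(c)$, evaluated at $b$. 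The second axiom will likewise reduce to \eqref{rephomalt3}, the third to \eqref{rephomalt1}, and the fourth to \eqref{rephomalt2} and \eqref{rephomalt4} (or linear combinations obtained by polarizing the squared arguments in \eqref{rephomalt1}--\eqref{rephomalt2}).

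For the second part, the verification is essentially a one-line calculation. By Proposition \ref{hommalcev==>hompremalcev}, the Hom-pre-Malcev product on $V$ associated to the representation $(V,\mathfrak{l}-\mathfrak{r},\beta)$ of the Hom-Malcev admissible algebra $(A,[-,-],\alpha)$ is given by $a\cdot b=(\mathfrak{l}-\mathfrak{r})(T(a))b=\mathfrak{l}(T(a))b-\mathfrak{r}(T(a))b$. On the other hand, by definition, $a\succ b-b\prec a=\mathfrak{l}(T(a))b-\mathfrak{r}(T(a))b$, so the two expressions coincide.

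The only real obstacle is bookkeeping: the four Hom-pre-alternative identities each produce terms whose $\alpha$-placement must be tracked carefully before invoking the bimodule relations, and one must occasionally polarize the identities \eqref{rephomalt1}--\eqref{rephomalt2} (by replacing $x$ with $x+y$ and extracting the cross term) to match the asymmetric forms appearing in the third Hom-pre-alternative axiom. No conceptual difficulty arises beyond this; the proof is a direct translation of the $\mathcal{O}$-operator identity and bimodule axioms into the $\succ$, $\prec$ language, exactly paralleling the classical (non-Hom) case of \cite{Madariaga}.
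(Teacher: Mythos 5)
Your proposal is correct and follows essentially the same route as the paper: the paper likewise verifies only the first Hom-pre-alternative axiom explicitly, reducing it via $\alpha\circ T=T\circ\beta$ and $T(a\ast c)=T(a)\ast T(c)$ to the bimodule relation \eqref{rephomalt3} with $x=T(a)$, $y=T(c)$, declares the remaining three axioms ``similar,'' and concludes the second part by the same one-line computation $a\cdot b=(\mathfrak{l}-\mathfrak{r})(T(a))b=a\succ b-b\prec a$. One small bookkeeping correction: the second axiom actually reduces to \eqref{rephomalt4} (not \eqref{rephomalt3}), and the fourth needs only the polarized form of \eqref{rephomalt2}, but since you already flag polarization and have all four bimodule identities in play, this does not affect the validity of the argument.
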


\begin{proof} For any $a,b,c\in V$, using \eqref{rephomalt3} and \eqref{O-ophomalternative} yields
\begin{align*}
&(a\succ b) \prec \beta(c) -\beta(a)\succ(b \prec c) + (b \prec a) \prec\beta(c)-\beta(b) \prec (a \ast c)\\
&\quad =\mathfrak{r}(T (\beta(c)))\mathfrak{l}(T (a))b-\mathfrak{l}(T(\beta(a)))\mathfrak{r}(T(c))b \\
&\quad\quad  +\mathfrak{r}(T(\beta(c)))\mathfrak{r}(T (a))b-\mathfrak{r}(T(a \ast c))\beta(b)\\
&\quad = \mathfrak{r}(\alpha(T(c)))\mathfrak{l}(T (a))b-\mathfrak{l}(\alpha(T(a)))\mathfrak{r}(T(c))b \\
&\quad \quad +\mathfrak{r}(\alpha(T(c)))\mathfrak{r}(T(a))b-\mathfrak{r}(T(a \ast c))\beta(b)=0.
\end{align*}
The other identities for $(V,\prec,\succ,\beta)$ being a Hom-pre-alternative algebras can be verified
similarly.
Moreover, using   \eqref{hommalcev==>hompremalcev} and  \eqref{homalt==>prehomalt},
\begin{equation*}
a\cdot b=(\mathfrak{l}-\mathfrak{r})(T(a))b=\mathfrak{l}(T(a))b-\mathfrak{r}(T(a))b=a\succ b-b\prec a.
\qedhere \end{equation*}
 \end{proof}
\begin{cor}\label{homalt==>homprealt}
Let $(A, \ast , \alpha)$ be a Hom-alternative algebra and $\mathcal{R}:A\rightarrow A$ be a Rota-Baxter operator
of weight $0$ such that $ \mathcal{R}\alpha =\alpha  \mathcal{R}$. If multiplications
$\prec $ and $\succ $ on $A$ are defined for all $x, y\in A$ by
$x\prec y = x\ast \mathcal{R}(y)$ and $x\succ y = \mathcal{R}(x)\ast y$,
then $(A, \prec , \succ , \alpha)$ is a Hom-pre-alternative algebra.

Moreover, if $(A,\cdot,\alpha)$ be the  Hom-pre-Malcev  algebra associated to the  Hom-Malcev admissible algebra $(A,[-,-],\alpha)$, then $x\cdot y=x\succ y-y\prec x$.
\end{cor}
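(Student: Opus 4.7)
The strategy is to recognise this corollary as the regular-bimodule specialization of Theorem \ref{Hom-pre-altToHom-Pre-Malcev}. On the Hom-alternative algebra $(A,\ast,\alpha)$ one has the canonical \emph{regular} bimodule $(A,L,R,\alpha)$ with $L(x)(y)=x\ast y$ and $R(x)(y)=y\ast x$. The first step is to verify the four bimodule axioms \eqref{rephomalt1}--\eqref{rephomalt4} in this special case. Axioms \eqref{rephomalt1} and \eqref{rephomalt2} are literally the left and right Hom-alternative identities $as_\alpha(x,x,y)=0$ and $as_\alpha(y,x,x)=0$, while \eqref{rephomalt3} and \eqref{rephomalt4} follow from their linearisations (equivalently, from the antisymmetry of $as_\alpha$ in its first two and last two slots). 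This is the one genuine computation; it is routine and takes only one line per axiom.

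Next, I rephrase the Rota-Baxter data as an $\mathcal{O}$-operator. The assumption $\mathcal{R}\alpha=\alpha\mathcal{R}$ is precisely the first half of \eqref{O-ophomalternative} for $T=\mathcal{R}$ with $\beta=\alpha$, and the weight-zero Rota-Baxter identity
\[
\mathcal{R}(x)\ast\mathcal{R}(y)=\mathcal{R}\bigl(\mathcal{R}(x)\ast y+x\ast\mathcal{R}(y)\bigr)
\]
is exactly $T(a)\ast T(b)=T\bigl(\mathfrak{l}(T(a))b+\mathfrak{r}(T(b))a\bigr)$ with $\mathfrak{l}=L$, $\mathfrak{r}=R$. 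Hence $\mathcal{R}$ is an $\mathcal{O}$-operator of $(A,\ast,\alpha)$ associated to the regular bimodule.

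Applying Theorem \ref{Hom-pre-altToHom-Pre-Malcev} with $V=A$, $T=\mathcal{R}$, $\beta=\alpha$, $\mathfrak{l}=L$, $\mathfrak{r}=R$ then yields a Hom-pre-alternative structure $(A,\prec,\succ,\alpha)$ whose operations, unpacked from \eqref{homalt==>prehomalt}, read
\[
x\succ y=L(\mathcal{R}(x))y=\mathcal{R}(x)\ast y,\qquad x\prec y=R(\mathcal{R}(y))x=x\ast\mathcal{R}(y),
\]
matching the formulas in the statement.

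For the moreover clause, I simply invoke the last sentence of Theorem \ref{Hom-pre-altToHom-Pre-Malcev}: the Hom-pre-Malcev algebra $(A,\cdot,\alpha)$ built on $(A,[-,-],\alpha)$ via this $\mathcal{O}$-operator satisfies $x\cdot y=x\succ y-y\prec x=\mathcal{R}(x)\ast y-y\ast\mathcal{R}(x)=[\mathcal{R}(x),y]$, in agreement with the Rota-Baxter corollary following Proposition \ref{hommalcev==>hompremalcev}. The only non-mechanical point in the whole argument is the bimodule check in the first paragraph; everything else is unpacking of definitions, so I do not anticipate any real obstacle.
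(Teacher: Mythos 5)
Your proposal is correct and follows exactly the route the paper intends: the corollary is stated without proof because the preceding Remark already identifies a weight-zero Rota-Baxter operator as an $\mathcal{O}$-operator associated to the regular bimodule $(A,L,R,\alpha)$, so the result is just Theorem \ref{Hom-pre-altToHom-Pre-Malcev} specialized to $V=A$, $T=\mathcal{R}$, $\beta=\alpha$, $\mathfrak{l}=L$, $\mathfrak{r}=R$. Your check that the regular-bimodule axioms reduce to the Hom-alternative identities and their linearizations, and your unpacking of $\prec$, $\succ$ and of the moreover clause $x\cdot y=x\succ y-y\prec x=[\mathcal{R}(x),y]$, are all accurate.
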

Moreover, Hom-Malcev
algebras, Hom-alternative algebras, Hom-pre-Malcev algebras and
Hom-pre-alternative algebras are closely related as follows (in the sense of commutative diagram of categories):
\begin{equation}\label{Diagramme1}
\begin{split}
\resizebox{13cm}{!}{\xymatrix{
\ar[rr] \mbox{\bf Hom-pre-alt alg $(A,\prec,\succ,\alpha)$ }\ar[d]_{\mbox{ $\ast=\prec+\succ$}}\ar[rr]^{\mbox{\quad\quad $\cdot=\prec-\succ$\quad\quad }}
                && \mbox{\bf Hom-pre-Malcev alg $(A,\cdot ,\alpha)$ }\ar[d]_{\mbox{ Commutator}}\\
\ar[rr] \mbox{\bf Hom-alt alg $(A,\ast,\alpha)$}\ar@<-1ex>[u]_{\mbox{ R-B }}\ar[rr]^{\mbox{ Commutator\quad\quad}}
                && \mbox{\bf Hom-Malcev alg  $(A,[-,-],\alpha)$}\ar@<-1ex>[u]_{\mbox{ R-B}}}
}\end{split}
\end{equation}

\subsection{Bimodules and \texorpdfstring{$\mathcal{O}$-}operators of Hom-pre-Malcev algebras} \label{subsec:bimodules}


In this subsection, we introduce and study bimodules of Hom-pre-Malcev algebras.
\begin{defn}\label{representation HPM}
 Let $(A, \cdot, \alpha)$ be a Hom-pre-Malcev algebra and $V$ be a vector space. Let
  $\ell, r: A \longrightarrow  End(V)$ be two linear maps and $\beta \in End(V)$. Then $(V, \ell,  r, \beta)$ is called a bimodule of $A$  if the
following conditions hold for any $x,y,z \in A$:
\begin{align}
 &\beta \ell(x)=\ell(\alpha(x)) \beta,\quad \beta r(x)=r(\alpha(x)) \beta,\label{rep1}\\
\begin{split}
& r(\alpha^{2}(x))\rho(\alpha (y))\rho(z)- r(\alpha (z)\cdot(y\cdot x))\beta^{2} + \ell(\alpha^{2}(y))r(z\cdot x)\beta \\
&\quad \quad+ \ell(\alpha ([y,z]))r(\alpha (x))\beta - \ell(\alpha^{2}(z))r(\alpha (x))\rho(y) = 0,
\end{split}
\label{rep2}\\
\begin{split}
 &\ell(\alpha^{2} (y))\ell(\alpha (z))r(x)-r(\alpha^{2}(x))\rho(\alpha (y))\rho(z) - \ell(\alpha^{2}(z))r(y\cdot x)\beta \\
 &\quad \quad- r(\alpha (z\cdot x))\rho(\alpha (y))\beta+ r([z,y]\cdot\alpha (x))\beta^{2} = 0,
 \end{split}
 \label{rep3}\\
 \begin{split}
   &r(\alpha (y)\cdot(z\cdot x))\beta^{2}+r(\alpha^{2} (x))\rho([y,z])\beta-\ell(\alpha^{2} (y))\ell(\alpha (z)) r(x) \\
    &\quad \quad+r(\alpha (y\cdot x))\rho(\alpha (z))\beta +\ell(\alpha^{2}(z))r(\alpha (x))\rho(y)= 0,
    \end{split}
    \label{rep4}\\
     \begin{split}
     &\ell([[x,y],\alpha (z)])\beta^{2}- \ell(\alpha^{2} (x))\ell(\alpha (y))\ell(z) + \ell(\alpha^{2} (z))\ell(\alpha (x))\ell(y) \\
   &\quad \quad+\ell(\alpha ([y,z])\ell(\alpha (x))\beta+ \ell(\alpha^{2}(y))\ell([x,z])\beta = 0,
     \end{split}
   \label{rep5}
 \end{align}
 where $\rho(x)=\ell(x)-r(x)$ and $[x,y]= x\cdot y - y\cdot x$.
\end{defn}
Now, define a linear operation $\cdot_{\ltimes} : \otimes^{2}(A \oplus V ) \longrightarrow (A \oplus V )$ by
$$(x + a) \cdot_{\ltimes} (y + b) = x \cdot y + \ell(x)(b) + r(y)(a), ~~\forall x, y \in A, a, b \in V,$$
and a linear map $\alpha + \beta : A \oplus V \longrightarrow A \oplus V$ by
$$(\alpha + \beta)(x + a) = \alpha(x) + \beta(a), ~~\forall x \in A, a \in V.$$

\begin{prop}\label{semidirectproduct hompreMalcev}
With the above notations, $(A\oplus V, \cdot_{\ltimes}, \alpha+ \beta)$ is a  Hom-pre-Malcev algebra, which is
denoted by $A \ltimes_{(\ell,~r)}^{\alpha, \beta}V$ or simply $A \ltimes V$ and called the semi-direct product of the  Hom-pre-Malcev algebra $(A, \cdot, \alpha)$ and
the representation $(V, \ell, r, \beta)$.
\end{prop}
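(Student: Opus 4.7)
The plan is to verify directly that $(A\oplus V,\cdot_\ltimes,\alpha+\beta)$ satisfies the Hom-pre-Malcev identity \eqref{HPM}, using the bimodule axioms \eqref{rep1}--\eqref{rep5} as the precise obstructions that must vanish. Multiplicativity of $\alpha+\beta$ with respect to $\cdot_\ltimes$ is immediate from the multiplicativity of $\alpha$ on $A$ together with \eqref{rep1}: expanding $(\alpha+\beta)\big((x+a)\cdot_\ltimes(y+b)\big)$ gives $\alpha(x\cdot y)+\beta(\ell(x)b)+\beta(r(y)a)$, while $(\alpha+\beta)(x+a)\cdot_\ltimes(\alpha+\beta)(y+b)=\alpha(x)\cdot\alpha(y)+\ell(\alpha(x))\beta(b)+r(\alpha(y))\beta(a)$, and these agree exactly by \eqref{rep1}.

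For \eqref{HPM}, since the identity is quadrilinear in $(x,y,z,t)$ and $A\oplus V=A\oplus V$, by multilinearity it suffices to check the identity on pure tensors where each of the four inputs is either in $A$ or in $V$. The key observation is that $\cdot_\ltimes$ restricted to $V\otimes V$ is zero, so any term in \eqref{HPM} containing a subproduct $v\cdot_\ltimes w$ with $v,w\in V$ vanishes. This lets us organize the verification into three regimes:

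\textbf{Case 1 (all four in $A$):} the identity reduces to the Hom-pre-Malcev identity \eqref{HPM} for $(A,\cdot,\alpha)$ and holds by hypothesis. \textbf{Case 2 (at least two in $V$):} a careful term-by-term inspection shows that in each of the five summands of \eqref{HPM}, the innermost or intermediate product always couples two elements of $V$, so every term lies in $V\cdot_\ltimes V=0$; for instance, with $y,z\in V$ one has $[\alpha(y),\alpha(z)]_\ltimes=0$ directly, and a subexpression like $\beta^2(c)\cdot_\ltimes(\alpha(x)\cdot_\ltimes(b\cdot_\ltimes t))$ lands in $V\cdot_\ltimes V$ after two nested multiplications, hence vanishes. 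The cases with three or four inputs in $V$ are subsumed. \textbf{Case 3 (exactly one in $V$):} this is the substantive computation. One places the $V$-entry successively in slot $x$, $y$, $z$, or $t$, expands \eqref{HPM} using the defining formulas $u\cdot_\ltimes v=\ell(u)v$, $v\cdot_\ltimes u=r(u)v$ (for $u\in A$, $v\in V$) together with $[u,v]_\ltimes=\rho(u)v$ and $[v,u]_\ltimes=-\rho(u)v$, and reads off the resulting identity on $V$. The four placements produce, respectively, identities \eqref{rep2}, \eqref{rep3}, \eqref{rep4}, and \eqref{rep5}, and the vanishing of each is exactly the corresponding bimodule axiom. (The equivariance \eqref{rep1} is used implicitly to commute $\beta$ past $\ell(\alpha^k(\cdot))$ and $r(\alpha^k(\cdot))$ throughout these expansions.)

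The main obstacle is purely bookkeeping in Case 3: the five summands of \eqref{HPM} each produce several scalar terms under the semi-direct product formulas, and one must track which slot carries the $V$-element to match the output with the correct axiom among \eqref{rep2}--\eqref{rep5}. The correspondence is forced by where $\ell$ versus $r$ appears in the outermost multiplication (determined by whether the $V$-factor sits on the left or right) and by how many applications of $\alpha$ versus $\beta$ have accumulated. Once the slot-to-axiom dictionary is fixed, each sub-case is a direct term-matching verification, and the converse direction (which is not claimed in the statement but is implicit in the ``if and only if'' flavor of such semi-direct constructions) would follow from the same expansion by taking the $V$-valued components as independent.
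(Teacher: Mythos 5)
Your proposal is correct and follows essentially the same route as the paper: both verify the identity \eqref{HPM} directly on $A\oplus V$, where the $A$-component reduces to the Hom-pre-Malcev identity for $(A,\cdot,\alpha)$ and the $V$-component, being linear in the single $V$-entry because $V\cdot_{\ltimes}V=0$, reduces slot by slot to the bimodule axioms \eqref{rep2}--\eqref{rep5}. The only difference is presentational — the paper expands all five summands on general mixed elements $x+a,\dots,t+d$ at once, while you organize the same computation by multilinearity into cases — and your slot-to-axiom dictionary agrees with what the paper's expansion yields.
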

\begin{proof}
For any $x, y, z, t \in A$ and  $a, b, c, d \in V$,
\begin{align*}
&\big((\alpha + \beta)[y+b,z+c]_{\rho}\big)\cdot_{\ltimes} (\alpha + \beta) \big((x+a)\cdot_{\ltimes}(t + d)\big) \\
 & \quad =\big(\alpha([y,z]) + \rho(\alpha(y))\beta(c)-\rho(\alpha(z))\beta(b)\big)\cdot_{\ltimes}\big(\alpha(x\cdot t) + \ell(\alpha(x))d + r(\alpha(t))a\big)\\
&\quad=\alpha([y,z])\cdot\alpha (x\cdot t) + \ell(\alpha ([y,z]))\big(\ell(\alpha(x))\beta(d) + r(\alpha(t))\beta(a)\big)\\
&\quad\quad + r(\alpha(x\cdot t))\big(\rho(\alpha(y))\beta(c) -\rho(\alpha(z))\beta(b)\big),
\\
&[[x + a,y + b]_{\rho},(\alpha+\beta)(z+c)]_{\rho}\cdot_{\ltimes}(\alpha^{2}+\beta^{2})(t+ d)\\
&\quad=[[x,y]+ \rho(x)b -\rho(y)a,\alpha(z) +\beta(c)]_{\rho}\cdot_{\ltimes}(\alpha^{2}(t) +  \beta^{2}(d))\\
&\quad=\big([[x,y],\alpha(z)]+ \rho([x,y])\beta(c)-\rho(\alpha(z))(\rho(x)b -\rho(y)a)\big)\cdot_{\ltimes}(\alpha^{2}(t) + \beta^{2}(d))\\
&\quad=[[x,y],\alpha(z)]\cdot\alpha^{2}(t) + \ell([[x,y],\alpha(z)])\beta^{2}(d)\\
& \quad\quad+ r(\alpha^{2}(t))\big(\rho([x,y])\beta(c)-\rho(\alpha(z))(\rho(x)b -\rho(y)a)\big),
\\
&(\alpha^{2}+\beta^{2})(y+b)\cdot_{\ltimes}\big([x +a,z+c]_{\rho}\cdot_{\ltimes}(\alpha+\beta)(t+ d)\big)\\
&\quad=(\alpha^{2}(y)+\beta^{2}(b))\cdot_{\ltimes}\big(([x,z] + \rho(x)c -\rho(z)a)\cdot_{\ltimes}(\alpha(t) + \beta(d))\big)\\
&\quad=(\alpha^{2}(y)+\beta^{2}(b))\cdot_{\ltimes}\big([x,z]\cdot\alpha(t) +
\ell([x,z])\beta(d) + r(\alpha(t))(\rho(x)c-\rho(z)a)\big)\\
&\quad=\alpha^{2}(y)\cdot ([x,z]\cdot\alpha(t)) + \ell(\alpha^{2}(y))\big(\ell([x,z])\beta(d) + r(\alpha(t))(\rho(x)c-\rho(z)a)\big)\\
 &\quad\quad+ r\big([x,z]\cdot\alpha(t)\big)\beta^{2}(b),
\\
&(\alpha^{2}+\beta^{2})(x+a)\cdot_{\ltimes}\big((\alpha+\beta)(y+b)\cdot_{\ltimes}((z +c) \cdot_{\ltimes}(t+d))\big)\\
 &\quad= (\alpha^{2}(x)+\beta^{2}(a))\cdot_{\ltimes}\big((\alpha(y) + \beta(b))\cdot_{\ltimes}((z\cdot t) + \ell(z)d + r(t)c)\big)\\
 &\quad=(\alpha^{2}(x)+\beta^{2}(a))\cdot_{\ltimes}\big(\alpha(y)\cdot(z\cdot t)+ \ell(\alpha(y))(\ell(z)d + r(t)c) +
r(z\cdot t)\beta(b)\big) \\
&\quad=\alpha^{2}(x)\cdot (\alpha(y)\cdot(z\cdot t)) + \ell(\alpha^{2}(x))\big(\ell(\alpha(y))(\ell(z)d +r(t)c) +r(z\cdot t)\beta(b)\big)\\
&\quad\quad+ r\big(\alpha(y)\cdot(z\cdot t)\big)\beta^{2}(a),
\\
&(\alpha^{2}+\beta^{2})(z+c)\cdot_{\ltimes}\big((\alpha+\beta)(x+a)\cdot_{\ltimes}((y +b) \cdot_{\ltimes}(t+d))\big)\\
&\quad= (\alpha^{2}(z)+\beta^{2}(c))\cdot_{\ltimes}\big((\alpha(x) + \beta(a))\cdot_{\ltimes}((y\cdot t) + \ell(y)d + r(t)b)\big)\\
&\quad=(\alpha^{2}(z)+\beta^{2}(c))\cdot_{\ltimes}\big(\alpha(x)\cdot(y\cdot t)+ \ell(\alpha(x))(\ell(y)d +r(t)b) +
r(y\cdot t)\beta(a)\big) \\
&\quad=\alpha^{2}(z)\cdot (\alpha(x)\cdot(y\cdot t)) + \ell(\alpha^{2}(z))\big(\ell(\alpha(x))(\ell(y)d +r(t)b) +r(y\cdot t)\beta(a)\big)\\
& \quad\quad+ r\big(\alpha(x)\cdot(y\cdot t)\big)\beta^{2}(c).
\end{align*}
Hence $(A \oplus V, \cdot _{\ltimes}, \alpha + \beta)$ is a Hom-pre-Malcev algebra if and only if
$(V, \ell, r, \beta)$ is a bimodule of $(A,\cdot, \alpha)$.
\end{proof}
\begin{prop}\label{rephompremalcev==rephommalcev}
  Let $(V,\ell,r,\beta)$ be a   bimodule of a Hom-pre-Malcev algebra $(A,\cdot,\alpha)$  and $(A,[-, -],\alpha)$
  be its associated Hom-Malcev algebra. Then,
  \begin{enumerate}
    \item \label{item1:prop:rephompremalcev==rephommalcev}
    $(V,\ell,\beta)$ is a representation of $(A, [-, -],\alpha)$,
    \item \label{item2:prop:rephompremalcev==rephommalcev}
    $(V,\ell - r,\beta)$ is a representation of $(A, [-, -],\alpha)$.
  \end{enumerate}
\end{prop}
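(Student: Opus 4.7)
The plan is to deduce both statements from the bimodule axioms \eqref{rep1}--\eqref{rep5}, relying on the admissibility result of Proposition \ref{prop:HompreMalcevHomMalcevadmis} and the semi-direct product characterizations in Propositions \ref{semidirectprduct HomMalcev} and \ref{semidirectproduct hompreMalcev}.

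For item (\ref{item1:prop:rephompremalcev==rephommalcev}), I would verify the two defining identities \eqref{rephommalcev}--\eqref{representation H-M} for $\ell$ directly. The commutation rule $\ell(\alpha(x))\beta = \beta\ell(x)$ is the first half of \eqref{rep1}, so \eqref{rephommalcev} is immediate. The Hom-Malcev representation identity \eqref{representation H-M} applied to $\ell$ reads
\begin{align*}
\ell([[x,y],\alpha(z)])\beta^{2} &= \ell(\alpha^{2}(x))\ell(\alpha(y))\ell(z) - \ell(\alpha^{2}(z))\ell(\alpha(x))\ell(y) \\
&\quad + \ell(\alpha^{2}(y))\ell([z,x])\beta - \ell(\alpha([y,z]))\ell(\alpha(x))\beta,
\end{align*}
which is obtained by rearranging \eqref{rep5} and using the antisymmetry $\ell([z,x]) = -\ell([x,z])$. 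So (\ref{item1:prop:rephompremalcev==rephommalcev}) follows from \eqref{rep5} alone.

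For item (\ref{item2:prop:rephompremalcev==rephommalcev}), I would take a conceptual route via semi-direct products. By Proposition \ref{semidirectproduct hompreMalcev}, the data $(V,\ell,r,\beta)$ produces a Hom-pre-Malcev algebra structure $(A\oplus V, \cdot_{\ltimes}, \alpha+\beta)$. Applying Proposition \ref{prop:HompreMalcevHomMalcevadmis} to this structure gives a Hom-Malcev algebra on $A\oplus V$ whose bracket is computed directly as
$$[x+a, y+b]_{\ltimes} = [x,y] + (\ell-r)(x)b - (\ell-r)(y)a.$$
This is exactly the semi-direct product bracket of Proposition \ref{semidirectprduct HomMalcev} associated to the candidate representation $(V,\ell-r,\beta)$ of $(A,[-,-],\alpha)$. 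The converse direction of Proposition \ref{semidirectprduct HomMalcev} then forces $(V,\ell-r,\beta)$ to be a representation of the associated Hom-Malcev algebra.

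The main obstacle is largely cosmetic: correctly tracking signs and powers of $\alpha,\beta$ in (\ref{item1:prop:rephompremalcev==rephommalcev}), and confirming that the antisymmetry of $[-,-]$ suffices to reconcile \eqref{rep5} with \eqref{representation H-M} without recourse to the other bimodule axioms. Once these bookkeeping points are handled, the result is essentially a packaging of results already established in Section \ref{subsec:Definition and basic properties}.
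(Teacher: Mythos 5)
Your proof is correct and follows essentially the same route as the paper: item (ii) is handled identically via the semi-direct product Propositions \ref{semidirectproduct hompreMalcev} and \ref{semidirectprduct HomMalcev}, and item (i) is read off directly from a single bimodule axiom. The only discrepancy is that the paper attributes (i) to \eqref{rep2}, whereas your identification of \eqref{rep5} --- which, after using $\ell([z,x])=-\ell([x,z])$, is literally \eqref{representation H-M} for $\rho=\ell$ --- is the right axiom, the paper's citation apparently being a typo.
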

\begin{proof}
\ref{item1:prop:rephompremalcev==rephommalcev} The statement \ref{item1:prop:rephompremalcev==rephommalcev} follows immediately from \eqref{rep2}.\\
\ref{item2:prop:rephompremalcev==rephommalcev}  By Proposition \ref{semidirectproduct hompreMalcev},
$A\ltimes_{\ell,r}^{\alpha, \beta} V$ is a Hom-pre-Malcev algebra. For its associated Hom-Malcev
 algebra $(A \oplus V, \overbrace{[-, -]}, \alpha+\beta)$,
\begin{align*}
 \overbrace{[x+a, y+b]}& =(x+a)\cdot_{\ltimes} (y+b) - (y+b)\cdot_{\ltimes} (x+a) \\
&= x \cdot y+ \ell(x)b+ r(y)a - y \cdot x - \ell(y)a - r(x)b \\
&= [x, y] + (\ell-r)(x)b - (\ell -r)(y)a.
\end{align*}
By Proposition \ref{semidirectprduct HomMalcev}, $(V,\ell-r,\beta)$ is a representation of $(A,[-, -],\alpha)$.
\end{proof}


If $(A, \cdot, \alpha)$ is a Hom-pre-Malcev algebra and $(A, [-, -], \alpha)$ is the associated  Hom-Malcev algebra, then $(A, L_\cdot, \alpha)$ is a representation of $(A, [-, -], \alpha)$ , where $L_\cdot$ is the left operation of $(A, \cdot, \alpha)$ given by $L(x)(y)=x\cdot y$.
  \begin{prop}
  Let $(A,\cdot,\alpha)$ be a Hom-algebra.
  Then $(A,\cdot,\alpha)$ is a Hom-pre-Malcev algebra if and only if $(A, [-, -],\alpha)$ defined by  \eqref{commutator}
is a Hom-Malcev algebra and $(A,L_{\cdot},\alpha)$ is a representation of $(A, \cdot,\alpha)$.
\end{prop}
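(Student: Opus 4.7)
The plan is to show that the Hom-pre-Malcev identity \eqref{HPM} is, up to elementary rearrangement, a reformulation of the representation identity \eqref{representation H-M} for $\rho = L_\cdot$ and $\beta = \alpha$; combined with Proposition \ref{prop:HompreMalcevHomMalcevadmis} this will give both directions.

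First I would note that the first representation condition \eqref{rephommalcev}, namely $L_\cdot(\alpha(x))\alpha = \alpha L_\cdot(x)$, is just the multiplicativity assumption $\alpha(x\cdot y)=\alpha(x)\cdot\alpha(y)$ that is built into our notion of Hom-algebra. So this condition is automatic on both sides of the equivalence and need not be discussed further.

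Next, the heart of the argument is to rewrite \eqref{HPM} in operator form. Applying the left multiplication operators to a generic element $t\in A$ and using multiplicativity to replace $\alpha(x\cdot t)=\alpha(x)\cdot\alpha(t)$ and $\alpha([y,z])=[\alpha(y),\alpha(z)]$, together with anti-symmetry $[z,x]=-[x,z]$, the identity \eqref{HPM} is equivalent to
\begin{align*}
L_\cdot([[x,y],\alpha(z)])\,\alpha^2(t)
&= L_\cdot(\alpha^2(x))L_\cdot(\alpha(y))L_\cdot(z)\,t - L_\cdot(\alpha^2(z))L_\cdot(\alpha(x))L_\cdot(y)\,t \\
&\quad + L_\cdot(\alpha^2(y))L_\cdot([z,x])\,\alpha(t) - L_\cdot(\alpha([y,z]))L_\cdot(\alpha(x))\,\alpha(t),
\end{align*}
which is precisely the identity \eqref{representation H-M} for the choice $\rho=L_\cdot$, $\beta=\alpha$ evaluated at $t$. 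This is the key calculation and is the only step that requires any care; the main obstacle is just bookkeeping of the $\alpha$-powers and the sign from anti-symmetry, but no new idea.

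With this equivalence in hand the two directions are immediate. For the forward direction, assume $(A,\cdot,\alpha)$ is Hom-pre-Malcev: Proposition \ref{prop:HompreMalcevHomMalcevadmis} gives that $(A,[-,-],\alpha)$ with $[x,y]=x\cdot y-y\cdot x$ is a Hom-Malcev algebra, and the reformulation above shows that $L_\cdot$ satisfies \eqref{representation H-M}, so $(A,L_\cdot,\alpha)$ is a representation of $(A,[-,-],\alpha)$. For the converse, if $(A,[-,-],\alpha)$ is Hom-Malcev and $(A,L_\cdot,\alpha)$ is a representation, then \eqref{representation H-M} holds for $\rho=L_\cdot$, $\beta=\alpha$; evaluating at an arbitrary $t\in A$ and reading the equivalence backwards yields \eqref{HPM}, so $(A,\cdot,\alpha)$ is a Hom-pre-Malcev algebra. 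This completes the proof.
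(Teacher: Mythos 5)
Your proposal is correct and follows essentially the same route as the paper: both proofs rewrite the Hom-pre-Malcev identity \eqref{HPM} in terms of the left multiplication operators and identify the resulting operator identity (using multiplicativity and the anti-symmetry $[x,z]=-[z,x]$) with the representation condition \eqref{representation H-M} for $\rho=L_\cdot$, $\beta=\alpha$, invoking Proposition \ref{prop:HompreMalcevHomMalcevadmis} for the Hom-Malcev admissibility. Your version is in fact more carefully written than the paper's, which leaves the sign bookkeeping implicit and contains a typo ($L_\cdot(x)$ where $L_\cdot(y)$ is meant in the last operator factor).
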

    \begin{proof}
       It follows from the definition of Hom-Malcev algebra and representation of  Hom-Malcev algebra. Then, for any $x,y,z,t\in A$,
    \begin{align*}
 &\alpha([y, z]) \cdot \alpha(x \cdot t) + [[x, y], \alpha(z)]\cdot \alpha^{2}(t) + \alpha^{2}(y)\cdot ([x, z] \cdot \alpha(t))\\& - \alpha^{2}(x)\cdot (\alpha(y)\cdot (z\cdot t)) + \alpha^{2}(z)\cdot (\alpha(x)\cdot (y\cdot t))\\
 &=  \Big( L_{\cdot}(\alpha[y,z])L_{\cdot}(\alpha(x))\alpha +  L_{\cdot}([[x,y],\alpha(z)])\alpha^{2} +  L_{\cdot}(\alpha^{2}(y)) L_{\cdot}([x,z])\alpha\\
  &- L_{\cdot}(\alpha^{2}(x))L_{\cdot}(\alpha(y))L_{\cdot}(z) + L_{\cdot}(\alpha^{2}(z))L_{\cdot}(\alpha(x))L_{\cdot}(x) \Big) (t) = 0.
\qedhere \end{align*}
    \end{proof}
As in \cite{Bai2,K2}, we rephrase the definition of $\mathcal{O}$-operator in terms of Hom-pre-Malcev
algebras as follows.
   \begin{defn}\label{o-ophpm}
Let $(A, \cdot, \alpha)$ be a Hom-pre-Malcev algebra and $(V,\ell,r,\beta)$ be a bimodule. A linear map $T : V \to  A $ is called an $\mathcal{O}$-operator associated to $(V,\ell,r,\beta)$
 if $T$ satisfies
 \begin{align}
 T\circ \beta &= \alpha \circ T,\\
 T (a) \cdot T (b)&= T \big(\ell(T (a))b + r(T (b))a\big), \quad\forall a, b \in V.
\end{align}
\end{defn}
\begin{rem}
Let $T$ is an $\mathcal{O}$-operator of a Hom-pre-Malcev algebra $(A, \cdot, \alpha)$ associated to $(V, \ell, r, \beta)$.
Then $T$ is an $\mathcal{O}$-operator of its associated Hom-Malcev algebra $(A, [-, -], \alpha)$ associated to $(V, \ell - r, \beta)$.
\end{rem}
\begin{proof}
By Proposition \ref{rephompremalcev==rephommalcev}, for all $a, b \in V,$
\begin{align*}
[T(a), T(b)]& = T(a) \cdot T(b) - T(b) \cdot T(a)\\
& = T (\ell(T (a))b + r(T (b))a) - T (\ell(T (b))a + r(T (a))b)\\
& = T ((\ell- r)(T (a))b - (\ell -r)(T (b))a).
\qedhere \end{align*}
\end{proof}
\section{Hom-M-dendriform algebras} \label{sec:Hom-M-dendriform algebras}
 The goal of this section is to introduce the notion of Hom-M-dendriform algebras which is the Hom-type of M-dendriform and show that is a Hom-pre-Malcev.
\begin{defn}
Hom-M-dendriform algebra is a vector space $A$ endowed with two
bilinear products $\blacktriangleright, \blacktriangleleft: A\times A\to A$ and a linear map
$\alpha: A \to A$
such that for all $x, y, z, t \in A$ and
\begin{align}
&x\cdot y=x \blacktriangleleft y+ x\blacktriangleright y,   \label{horizontal} \\
&x \diamond y= x \blacktriangleleft y -y \blacktriangleright x, \label{vertical} \\
&[x, y]=x\cdot y-y \cdot x=x\diamond y-y\diamond x, \label{horizontal-vertical}
\end{align}
the following identities are satisfied:
\begin{align}\label{dend1}
&
\begin{array}{l}
(\alpha(z)\diamond(y\diamond x))\blacktriangleright \alpha^{2}(t)- \alpha^{2}(x)\blacktriangleright (\alpha (y)\cdot(z\cdot t))+\alpha^{2}(z)\blacktriangleleft(\alpha(x)\blacktriangleright(y\cdot t)) \\
\quad + \alpha([y,z])\blacktriangleleft \alpha(x\blacktriangleright t)
-\alpha^{2}(y)\blacktriangleleft((z\diamond x)\blacktriangleright \alpha(t)) = 0,
\end{array}
\\
\label{dend2}
&
\begin{array}{l}
\alpha^{2}(z)\blacktriangleleft(\alpha(x)\blacktriangleleft(y\blacktriangleright t))
-(\alpha (z)\diamond(x\diamond y))\blacktriangleright \alpha^{2}(t)- \alpha^{2}(x)\blacktriangleleft (\alpha (y)\blacktriangleright(z\cdot t)) \\
\quad
 - \alpha(z\diamond y)\blacktriangleright \alpha(x\cdot t)+ \alpha^{2}(y)\blacktriangleright([x,z]\cdot \alpha(t))= 0,
\end{array} \\
\label{dend3}
&
\begin{array}{l}
\alpha^{2}(z)\blacktriangleleft(\alpha(x)\blacktriangleleft(y\blacktriangleleft t))+([x,y]\diamond\alpha(z))\blacktriangleright \alpha^{2}(t)- \alpha^{2}(x)\blacktriangleleft(\alpha(y)\blacktriangleleft(z\blacktriangleright t)) \\
\quad
+ \alpha(y\diamond z)\blacktriangleright \alpha(x\cdot t) + \alpha^{2}(y)\blacktriangleleft((x\diamond z)\blacktriangleright \alpha(t))= 0,
\end{array} \\
\label{dend4}
&
\begin{array}{l}
[[x,y],\alpha (z)]\blacktriangleleft \alpha^{2}(t)-\alpha^{2} (x)\blacktriangleleft(\alpha (y)\blacktriangleleft (z \blacktriangleleft t)) + \alpha^{2}(z)\blacktriangleleft(\alpha (x)\blacktriangleleft (y\blacktriangleleft t)) \\
\quad
+ \alpha ([y,z])\blacktriangleleft\alpha (x \blacktriangleleft t)+ \alpha ^{2}(y)\blacktriangleleft([x,z]\blacktriangleleft\alpha(t))= 0.
\end{array}
\end{align}
\end{defn}

\begin{thm}\label{product}
Let $(A, \blacktriangleright,\blacktriangleleft, \alpha)$ be a Hom-M-dendriform algebra.
\begin{enumerate}
         \item \label{thm:product:1} The product given by   \eqref{horizontal} defines a Hom-pre-Malcev algebra $(A,\cdot, \alpha)$, called the associated horizontal Hom-pre-Malcev algebras.
         \item \label{thm:product:2} The product given by   \eqref{vertical} defines a Hom-pre-Malcev algebra $(A,\diamond, \alpha)$, called the associated vertical Hom-pre-Malcev algebras.
         \item \label{thm:product:3} The associated horizontal and vertical Hom-pre-Malcev algebras $(A,\cdot, \alpha)$ and $(A,\diamond, \alpha)$ of a Hom-M-dendriform algebra $(A, \blacktriangleright,\blacktriangleleft, \alpha)$ have the same associated Hom-Malcev algebras $(A,[-,-],\alpha)$ defined by   \eqref{horizontal-vertical}, called the associated Hom-Malcev algebra of the  Hom-M-dendriform algebra $(A, \blacktriangleright,\blacktriangleleft, \alpha)$.
       \end{enumerate}
\end{thm}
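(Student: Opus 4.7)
The plan is to prove each of the three items by direct expansion using only the four defining axioms (dend1)--(dend4) together with the consequences of (horizontal), (vertical), (horizontal-vertical), and then to invoke Proposition~\ref{prop:HompreMalcevHomMalcevadmis} at the very end for (iii). The key observation is that for both products, the Hom-pre-Malcev identity $HPM$ should be obtainable as a signed sum of (dend1)--(dend4), and the statement (iii) should follow from the definitional identity $x\cdot y-y\cdot x = x\diamond y-y\diamond x$.

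For item (i), I would simply compute the sum $(\mathrm{dend1})+(\mathrm{dend2})+(\mathrm{dend3})+(\mathrm{dend4})$ and check, term by term, that it reassembles exactly into $HPM_{\cdot}(x,y,z,t)$. Grouping the monomials by their outermost factor is the natural bookkeeping: (a) the terms of the form $(\cdots)\blacktriangleleft\alpha^{2}(t)$ and $(\cdots)\blacktriangleright\alpha^{2}(t)$ collapse, using $\cdot=\blacktriangleleft+\blacktriangleright$ and the identity $[x,y]=x\diamond y-y\diamond x$, into $[[x,y],\alpha(z)]\cdot\alpha^{2}(t)$; (b) the terms with outer factor $-\alpha^{2}(x)$ collapse via $\blacktriangleleft+\blacktriangleright=\cdot$ into $-\alpha^{2}(x)\cdot(\alpha(y)\cdot(z\cdot t))$, and symmetrically for $\alpha^{2}(z)$; (c) the pieces $\alpha([y,z])\blacktriangleleft\alpha(x\blacktriangleleft t)+\alpha([y,z])\blacktriangleleft\alpha(x\blacktriangleright t)+\alpha(y\diamond z-z\diamond y)\blacktriangleright\alpha(x\cdot t)$ reassemble, again via $[y,z]=y\diamond z-z\diamond y$, into $[\alpha(y),\alpha(z)]\cdot\alpha(x\cdot t)$; (d) the remaining $\alpha^{2}(y)$-terms collapse to $\alpha^{2}(y)\cdot([x,z]\cdot\alpha(t))$. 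The sum is thus precisely $HPM_{\cdot}(x,y,z,t)$, forcing its vanishing.

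Item (ii) is the main obstacle. The same strategy applies in principle, but because $\diamond$ pairs $\blacktriangleleft$ with the opposite of $\blacktriangleright$, the monomials that arise when expanding $HPM_{\diamond}(x,y,z,t)$ no longer match (dend1)--(dend4) in the clean additive way that worked for $\cdot$; finding the correct signed combination (possibly after a relabeling such as $x\leftrightarrow z$ in certain axioms) is where the genuine combinatorial effort lies. A cleaner route worth trying first is a symmetry argument: define the twisted products $\blacktriangleleft'=\blacktriangleleft$ and $x\blacktriangleright' y=-y\blacktriangleright x$, so that the new horizontal product $\blacktriangleleft'+\blacktriangleright'$ coincides with the old vertical product $\diamond$; if one can verify that (dend1)--(dend4) are invariant under this substitution, then (ii) follows from (i) applied to $(A,\blacktriangleleft',\blacktriangleright',\alpha)$ with no further computation. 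Finally, (iii) is immediate: from (horizontal-vertical) one reads off $x\cdot y-y\cdot x=x\diamond y-y\diamond x$, so the two Hom-pre-Malcev structures of (i) and (ii) induce the same commutator bracket, and Proposition~\ref{prop:HompreMalcevHomMalcevadmis} then assigns to them the same associated Hom-Malcev algebra $(A,[-,-],\alpha)$.
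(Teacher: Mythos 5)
Your strategy for part (i) is essentially the paper's own: the printed proof expands the Hom-pre-Malcev expression for $\cdot=\blacktriangleleft+\blacktriangleright$ into $\blacktriangleleft,\blacktriangleright$-monomials and regroups them into the left-hand sides of \eqref{dend1}--\eqref{dend4}, which is exactly your computation read in the opposite direction, and your part (iii) is the same one-line observation from \eqref{horizontal-vertical} plus Proposition \ref{prop:HompreMalcevHomMalcevadmis}. One practical warning: if you run the term-by-term check against the axioms as printed, it will not close, because the first term of \eqref{dend3} should read $\alpha^{2}(z)\blacktriangleright(\alpha(x)\cdot(y\cdot t))$ rather than $\alpha^{2}(z)\blacktriangleleft(\alpha(x)\blacktriangleleft(y\blacktriangleleft t))$ (the latter merely duplicates a term already present in \eqref{dend4}, and the eight monomials of $\alpha^{2}(z)\cdot(\alpha(x)\cdot(y\cdot t))$ are then not all accounted for); the corrected term is the one appearing in the third group of the paper's own computation and the one consistent with the bimodule identity \eqref{rep4}, so treat this as a typo in the definition rather than a failure of your plan. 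For part (ii) the paper offers nothing beyond ``we will just prove (i)''; your transpose argument ($\blacktriangleleft'=\blacktriangleleft$, $x\blacktriangleright' y=-y\blacktriangleright x$, so that $\blacktriangleleft'+\blacktriangleright'=\diamond$) is precisely the content of the proposition on the transpose Hom-M-dendriform algebra stated just after the theorem, and the invariance you leave conditional does hold: each axiom is carried to an instance of itself with permuted arguments (for example, \eqref{dend1} for the transposed operations at $(x,y,z,t)$ is \eqref{dend1} for the original operations at $(t,z,y,x)$, using $[z,y]=-[y,z]$). That route therefore legitimately reduces (ii) to (i) and in fact supplies an argument the paper leaves to the reader.
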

\begin{proof}
We will just prove \ref{thm:product:1}. In fact, using \eqref{HPM}, for any $x,y,z,t\in A,$
\begin{align*}
&
\alpha([y, z])\cdot\alpha(x\cdot t)+[[x,y],\alpha(z)] \cdot \alpha^{2}(t)
+ \alpha^{2}(y) \cdot ([x,z] \cdot \alpha (t))\\
&\quad-\alpha^{2}(x) \cdot (\alpha (y) \cdot (z \cdot t))  + \alpha^{2}(z) \cdot (\alpha (x) \cdot (y \cdot t))\\
&=\alpha([y, z])\blacktriangleleft\alpha(x\blacktriangleleft t)+\alpha([y,z])\blacktriangleleft\alpha( x\blacktriangleright t) + \alpha([y, z])\blacktriangleright \alpha(x\blacktriangleleft t)\\
& \quad+\alpha([y,z])\blacktriangleright \alpha(x\blacktriangleright t)+[[x,y],\alpha(z)]\blacktriangleleft\alpha^{2}(t)  +[[x,y],\alpha(z)]\blacktriangleright\alpha^{2}(t)\\
&\quad+  \alpha^{2}(y)\blacktriangleleft([x,z]\blacktriangleleft\alpha(t))+ \alpha^{2}(y)\blacktriangleleft([x,z]\blacktriangleright\alpha(t))+\alpha^{2}(y)\blacktriangleright([x,z]\blacktriangleleft\alpha(t))\\
&\quad+\alpha^{2}(y)\blacktriangleright([x,z]\blacktriangleright\alpha(t))+\alpha^{2}(z)\blacktriangleleft(\alpha(x)\blacktriangleleft(y\blacktriangleleft t))+\alpha^{2}(z)\blacktriangleleft(\alpha(x)\blacktriangleleft(y\blacktriangleright t))\\
&\quad+ \alpha^{2}(z)\blacktriangleleft(\alpha(x)\blacktriangleright(y\blacktriangleleft t))+\alpha^{2}(z)\blacktriangleleft(\alpha(x)\blacktriangleright(y\blacktriangleright t))
+\alpha^{2}(z)\blacktriangleright(\alpha(x)\blacktriangleleft(y\blacktriangleleft t))\\
&\quad+ \alpha^{2}(z)\blacktriangleright(\alpha(x)\blacktriangleleft(y\blacktriangleright t))
 + \alpha^{2}(z)\blacktriangleright(\alpha(x)\blacktriangleright(y\blacktriangleleft t))
 +\alpha^{2}(z)\blacktriangleright(\alpha(x)\blacktriangleright(y\blacktriangleright t))\\
&\quad-\alpha^{2}(x)\blacktriangleleft(\alpha(y)\blacktriangleleft( z\blacktriangleleft t))
-\alpha^{2}(x)\blacktriangleleft(\alpha(y)\blacktriangleleft(z\blacktriangleright t))
-\alpha^{2}(x)\blacktriangleleft(\alpha(y)\blacktriangleright(z\blacktriangleleft t))\\
&\quad-\alpha^{2}(x)\blacktriangleleft(\alpha(y)\blacktriangleright(z\blacktriangleright t))
-\alpha^{2}(x)\blacktriangleright(\alpha(y)\blacktriangleleft(z\blacktriangleleft t))
-\alpha^{2}(x)\blacktriangleright(\alpha(y)\blacktriangleleft(z\blacktriangleright t))\\
&\quad-\alpha^{2}(x)\blacktriangleright(\alpha(y)\blacktriangleright(z\blacktriangleleft t))
-\alpha^{2}(x)\blacktriangleright(\alpha(y)\blacktriangleright(z\blacktriangleright t))\\
&=\alpha([y, z])\blacktriangleleft\alpha(x\blacktriangleright t)+(\alpha(z)\diamond(y\diamond x))\blacktriangleright\alpha^{2}(t)-\alpha^{2}(y)\blacktriangleleft((z\diamond x)\blacktriangleright\alpha(t))\\
&\quad+\alpha^{2}(z)\blacktriangleleft(\alpha(x)\blacktriangleright(y\cdot t))-\alpha^{2}(x)\blacktriangleright(\alpha(y)\cdot(z\cdot t))
-\alpha(z\diamond y)\blacktriangleright \alpha(x\cdot t)\\
&\quad-(\alpha(z)\diamond(x\diamond y))\blacktriangleright\alpha^{2}(t)+\alpha^{2}(y)\blacktriangleright([x,z]\cdot\alpha(t))
+\alpha^{2}(z)\blacktriangleleft(\alpha(x)\blacktriangleleft(y\blacktriangleright t))\\
&\quad-\alpha^{2}(x)\blacktriangleleft(\alpha(y)\blacktriangleright(z\cdot t))+\alpha(y\diamond z)\blacktriangleright \alpha(x\cdot t)
+([x,y]\diamond\alpha(z))\blacktriangleright\alpha^{2}(t)\\
&\quad+ \alpha^{2}(y)\blacktriangleleft((x\diamond z)\blacktriangleright\alpha(t))+\alpha^{2}(z)\blacktriangleright(\alpha(x)\cdot(y\cdot t))
-\alpha^{2}(x)\blacktriangleleft(\alpha(y)\blacktriangleleft(z\blacktriangleright t))\\
&\quad+[[x,y],\alpha(z)]\blacktriangleleft\alpha^{2}(t) + \alpha([y, z])\blacktriangleleft \alpha(x\blacktriangleleft t)
+\alpha^{2}(y)\blacktriangleleft([x,z]\blacktriangleleft\alpha(t))\\
&\quad+ \alpha^{2}(z)\blacktriangleleft(\alpha(x)\blacktriangleleft(y\blacktriangleleft t))-\alpha^{2}(x)\blacktriangleleft(\alpha(y)\blacktriangleleft( z\blacktriangleleft t))=0.
\qedhere \end{align*}
\end{proof}
Hom-M-dendriform algebras are closely related to bimodules for Hom-pre-Malcev algebras.
\begin{prop}
Let $(A,\blacktriangleright,\blacktriangleleft,\alpha)$ be a Hom-M-dendriform algebra.
Let $L_{\blacktriangleleft}$ and $R_{\blacktriangleright}$ be the left and right multiplication operators corresponding respectively to the two operations $\blacktriangleright$ and $\blacktriangleleft$.
Then $(A,L_{\blacktriangleleft},R_{\blacktriangleright},\alpha)$ is a bimodule of its associated horizontal Hom-pre-Malcev algebra $(A,\cdot,\alpha)$.
\end{prop}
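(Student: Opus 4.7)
The plan is to verify the five bimodule axioms \eqref{rep1}--\eqref{rep5} in Definition \ref{representation HPM} with $\ell = L_{\blacktriangleleft}$, $r = R_{\blacktriangleright}$, and $\beta = \alpha$, by unpacking each of them into an identity purely in $\blacktriangleright$, $\blacktriangleleft$, $\diamond$ and $\cdot$, and matching it against one of the defining identities \eqref{dend1}--\eqref{dend4} of the Hom-M-dendriform algebra.

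First I would dispatch the linear compatibility \eqref{rep1} immediately from the multiplicativity of $\alpha$: applying $\alpha L_{\blacktriangleleft}(x) = L_{\blacktriangleleft}(\alpha(x))\alpha$ on $y\in A$ reduces to $\alpha(x\blacktriangleleft y)=\alpha(x)\blacktriangleleft\alpha(y)$, and the analogous computation for $R_{\blacktriangleright}$ reduces to $\alpha(y\blacktriangleright x)=\alpha(y)\blacktriangleright\alpha(x)$. Both are consequences of $\alpha$ being an algebra morphism for the two Hom-M-dendriform products.

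Next, I would record the crucial translation dictionary: with $\ell = L_{\blacktriangleleft}$ and $r = R_{\blacktriangleright}$, the operator $\rho = \ell - r$ satisfies $\rho(x)(y) = x\blacktriangleleft y - y\blacktriangleright x = x\diamond y$ by \eqref{vertical}, while \eqref{horizontal} and \eqref{horizontal-vertical} yield $\ell(x)(y)+r(y)(x)=x\cdot y$ and $[x,y]=x\diamond y-y\diamond x$. These identifications convert every operator expression occurring in \eqref{rep2}--\eqref{rep5} into a combination of $\blacktriangleright$, $\blacktriangleleft$, $\diamond$ and $\cdot$. Then, for each of \eqref{rep2}, \eqref{rep3}, \eqref{rep4}, \eqref{rep5}, I would apply both sides to an arbitrary element $t\in A$, expand using the dictionary above together with the multiplicativity of $\alpha$, and check that the resulting identity coincides (possibly after a relabeling of the dummy variables $x,y,z,t$ and a sign change produced by antisymmetry of $[-,-]$) with one of \eqref{dend1}--\eqref{dend4}. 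As a representative computation, \eqref{rep5} applied to $t$ produces
\begin{align*}
&[[x,y],\alpha(z)]\blacktriangleleft\alpha^{2}(t) - \alpha^{2}(x)\blacktriangleleft(\alpha(y)\blacktriangleleft(z\blacktriangleleft t)) + \alpha^{2}(z)\blacktriangleleft(\alpha(x)\blacktriangleleft(y\blacktriangleleft t)) \\
&\qquad + \alpha([y,z])\blacktriangleleft\alpha(x\blacktriangleleft t) + \alpha^{2}(y)\blacktriangleleft([x,z]\blacktriangleleft\alpha(t)) = 0,
\end{align*}
which is exactly \eqref{dend4}. The identities \eqref{rep2}, \eqref{rep3}, \eqref{rep4} are handled analogously and match \eqref{dend1}, \eqref{dend2}, \eqref{dend3}.

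The main obstacle I anticipate is purely bookkeeping: each bimodule axiom mixes $\ell$, $r$ and $\rho$ in a specific pattern that must be carefully expanded into $\blacktriangleright$/$\blacktriangleleft$/$\diamond$ notation, and one has to spot the exact variable permutation (and sign) that aligns the expanded expression with the corresponding Hom-M-dendriform identity. Once the dictionary in Step 2 is fixed, this becomes a mechanical but lengthy verification with no conceptual content beyond multiplicativity of $\alpha$ and antisymmetry of $[-,-]$.
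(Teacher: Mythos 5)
Your proposal follows essentially the same route as the paper: specialize $\ell=L_{\blacktriangleleft}$, $r=R_{\blacktriangleright}$, $\beta=\alpha$, observe that $\rho=\ell-r$ becomes left multiplication for $\diamond$, and match each of \eqref{rep2}--\eqref{rep5} applied to $t$ against \eqref{dend1}--\eqref{dend4} after relabeling variables; your displayed verification of \eqref{rep5} coincides with the paper's computation of that axiom. The paper likewise writes out only two of the five conditions (\eqref{rep2} and \eqref{rep5}) and dismisses the rest as ``similar computations,'' so your level of detail matches its own proof.
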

\begin{proof} We  verify that   \eqref{rep2} and \eqref{rep5} hold for $(A,L_{\blacktriangleleft},R_{\blacktriangleright},\alpha)$.
For any $x,y,z,t \in A$,
\begin{align*}
& R_\blacktriangleright(\alpha^{2}(x))L_\diamond(\alpha (y))L_\diamond(z)(t)- R_\blacktriangleright(\alpha (z)\cdot(y\cdot x))\alpha^{2}(t) + L_\blacktriangleleft(\alpha^{2}(y))R_\blacktriangleright(z\cdot x)\alpha(t)\\
 & \quad + L_\blacktriangleleft(\alpha ([y,z]))R_\blacktriangleright(\alpha (x))\alpha(t)- L_\blacktriangleleft(\alpha^{2}(z))R_\blacktriangleright(\alpha (x))L_\diamond(y)(t)\\
& = (\alpha (y)\diamond (z\diamond t))\blacktriangleright\alpha^{2}(x)-\alpha^{2}(t)\blacktriangleright(\alpha (z)\cdot(y\cdot x))  + \alpha^{2}(y)\blacktriangleleft(\alpha(t)\blacktriangleright(z\cdot x))\\
 & \quad+ \alpha ([y,z])\blacktriangleleft \alpha (x\blacktriangleright t) - \alpha^{2}(z)\blacktriangleleft((y\diamond t)\blacktriangleright\alpha (x))= 0.
\\
   &L_\blacktriangleleft[[x,y],\alpha (z)]\alpha^{2}(t) - L_\blacktriangleleft(\alpha^{2} (x))L_\blacktriangleleft(\alpha (y))L_\blacktriangleleft(z)(t) + L_\blacktriangleleft(\alpha^{2}(z))L_\blacktriangleleft(\alpha (x))L_\blacktriangleleft(y)(t)\\
 & \quad+ L_\blacktriangleleft(\alpha ([y,z]))L_\blacktriangleleft(\alpha (x))\alpha(t) + L_\blacktriangleleft(\alpha ^{2}(y))L_\blacktriangleleft([x,z])\alpha(t)\\
& = [[x,y],\alpha (z)]\blacktriangleleft \alpha^{2}(t)-\alpha^{2} (x)\blacktriangleleft(\alpha (y)\blacktriangleleft (z \blacktriangleleft t)) + \alpha^{2}(z)\blacktriangleleft(\alpha (x)\blacktriangleleft (y\blacktriangleleft t))\\
& \quad+ \alpha ([y,z])\blacktriangleleft\alpha (x \blacktriangleleft t)+ \alpha ^{2}(y)\blacktriangleleft([x,z]\blacktriangleleft\alpha(t))= 0.
 \end{align*}
Other identities can be proved using similar computations. Thus, $(A,L_{\blacktriangleleft},R_{\blacktriangleright},\alpha)$ is a representation of Hom-pre-Malcev $(A,\cdot,\alpha)$.
\end{proof}

\begin{prop}
 Let $(A,\blacktriangleright,\blacktriangleleft,\alpha)$ be a Hom-M-dendriform algebra. With two binary operations $\blacktriangleright ^{t}, \blacktriangleleft ^{t}:
A\otimes A \to A$ defined for all $x, y\in A$ by
\begin{equation}\label{transpose}
 x \blacktriangleright ^{t} y = -y \blacktriangleright x, x \blacktriangleleft ^{t} y = x \blacktriangleleft y,
\end{equation}
$(A,\blacktriangleright^{t},\blacktriangleleft^{t},\alpha)$ is a Hom-M-dendriform algebra.

Its  associated horizontal Hom-pre-Malcev algebra is the associated vertical Hom-pre-Malcev algebra $(A, \diamond,\alpha)$ of $(A, \blacktriangleright,\blacktriangleleft,\alpha)$, and its associated vertical Hom-pre-Malcev
algebra is the associated horizontal Hom-pre-Malcev algebra $(A,\cdot,\alpha)$ of $(A,\blacktriangleright,\blacktriangleleft,\alpha)$, that is,
\begin{align}
\label{transpose point} x\cdot^t y &=x\blacktriangleleft^t y+x\blacktriangleright^t y=x\blacktriangleleft
  y-y\blacktriangleright y=x\diamond y,\\
\label{tanspose diamond} x\diamond^t y &=x\blacktriangleleft^t y-y\blacktriangleright^t x=x\blacktriangleleft y+x\blacktriangleright y=x\cdot y,\\
\begin{split} [x,y]^{t} &=x\blacktriangleleft^t y+x\blacktriangleright^t y-y\blacktriangleleft^t x-y\blacktriangleright^t x \\
 &=x\blacktriangleleft y-y\blacktriangleright x-y\blacktriangleleft x+x\blacktriangleright y=[x,y].
\label{transpose bracket}
\end{split}
\end{align}
\end{prop}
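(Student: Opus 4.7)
The plan is to verify the proposition in three stages, corresponding to the three claims in the statement (namely, that $(A,\blacktriangleright^t,\blacktriangleleft^t,\alpha)$ is itself Hom-M-dendriform, the swap of horizontal and vertical Hom-pre-Malcev structures, and the fact that the associated Hom-Malcev bracket is preserved).

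First, I would dispatch the identities \eqref{transpose point}--\eqref{transpose bracket} by direct substitution, since these are essentially already displayed in the statement. One uses only the definitions \eqref{horizontal}--\eqref{horizontal-vertical} applied to $\blacktriangleright^t$ and $\blacktriangleleft^t$ together with \eqref{transpose}; for instance, $x\cdot^t y = x\blacktriangleleft^t y + x\blacktriangleright^t y = x\blacktriangleleft y - y\blacktriangleright x = x\diamond y$, and symmetrically for $\diamond^t$ and $[-,-]^t$. This already gives the identification of the horizontal/vertical Hom-pre-Malcev algebras with those of the original, \emph{provided} $(A,\blacktriangleright^t,\blacktriangleleft^t,\alpha)$ is indeed Hom-M-dendriform so that Theorem \ref{product} applies to it.

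The core of the proof is therefore to verify axioms \eqref{dend1}--\eqref{dend4} for $(A,\blacktriangleright^t,\blacktriangleleft^t,\alpha)$. The strategy is term-by-term substitution of $a\blacktriangleright^t b = -b\blacktriangleright a$, $\blacktriangleleft^t = \blacktriangleleft$, $\cdot^t = \diamond$, $\diamond^t = \cdot$, and $[-,-]^t = [-,-]$ into each transposed axiom, and then recognition of the resulting expression as one of the original identities \eqref{dend1}--\eqref{dend4}, possibly after a permutation of the variables $x,y,z,t$. I expect \eqref{dend4} to be self-dual under the transposition, since it only features $\blacktriangleleft$ and the bracket, both of which are unchanged. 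The other three identities should get permuted among themselves: each substitution converts one mixed $\blacktriangleright/\blacktriangleleft$ axiom into another, with signs organized so that after factoring out a global sign the statement becomes exactly one of \eqref{dend1}--\eqref{dend3} evaluated on a suitable reordering of $(x,y,z,t)$.

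The main obstacle is precisely this bookkeeping step: tracking sign flips from each occurrence of $\blacktriangleright^t$, keeping the compound expressions $\alpha(z)\diamond(y\diamond x)$, $\alpha(y)\cdot(z\cdot t)$, etc.\ in a canonical form under the swap $\cdot \leftrightarrow \diamond$, and correctly identifying the permutation of arguments that maps each transposed axiom back to one of the original four. Once this matching is carried out, each transposed axiom holds by the corresponding original axiom, and $(A,\blacktriangleright^t,\blacktriangleleft^t,\alpha)$ is a Hom-M-dendriform algebra. Combined with the first step, this yields the identification of the associated horizontal and vertical Hom-pre-Malcev algebras as claimed, completing the proof.
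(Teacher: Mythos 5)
Your proposal follows essentially the same route as the paper: the identities \eqref{transpose point}--\eqref{transpose bracket} are immediate from the definitions, \eqref{dend4} is manifestly invariant under the transposition since it involves only $\blacktriangleleft$ and $[-,-]$, and each of \eqref{dend1}--\eqref{dend3} reduces after substitution and sign bookkeeping to one of the original axioms on a permutation of $(x,y,z,t)$ (in fact each returns to itself, e.g.\ \eqref{dend1}$^{t}$ evaluated at $(x,y,z,t)$ is \eqref{dend1} at $(t,z,y,x)$). This matches the paper's proof, which likewise checks \eqref{dend4} explicitly and treats the remaining axioms as "similarly".
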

\begin{proof}
By \eqref{transpose point}-\eqref{transpose bracket}, for all $x,y,z,t\in A$,
\begin{align*}
&[[x,y]^{t},\alpha (z)]^{t}\blacktriangleleft^{t} \alpha^{2}(t)-\alpha^{2} (x)\blacktriangleleft^{t}(\alpha (y)\blacktriangleleft^{t} (z \blacktriangleleft^{t} t)) + \alpha^{2}(z)\blacktriangleleft^{t}(\alpha (x)\blacktriangleleft^{t} (y\blacktriangleleft^{t} t))\\
&\quad+\alpha ([y,z]^{t})\blacktriangleleft^{t}\alpha (x \blacktriangleleft^{t} t)+ \alpha ^{2}(y)\blacktriangleleft^{t}([x,z]^{t}\blacktriangleleft^{t}\alpha(t))\\
&=[[x,y],\alpha (z)]\blacktriangleleft \alpha^{2}(t)-\alpha^{2} (x)\blacktriangleleft(\alpha (y)\blacktriangleleft (z \blacktriangleleft t)) + \alpha^{2}(z)\blacktriangleleft(\alpha (x)\blacktriangleleft (y\blacktriangleleft t))\\
&\quad+ \alpha ([y,z])\blacktriangleleft\alpha (x \blacktriangleleft t)+ \alpha ^{2}(y)\blacktriangleleft([x,z]\blacktriangleleft\alpha(t)).
\end{align*}
Similarly, \eqref{dend1}$^{t}$=\eqref{dend1}, \eqref{dend2}$^{t}$=\eqref{dend2}  and \eqref{dend3}$^{t}$=\eqref{dend3}. Therefore, $(A,\blacktriangleright^t,\blacktriangleleft^t,\alpha)$ is a Hom-M-dendriform algebra.
\end{proof}
\begin{defn} Let $(A,\blacktriangleright,\blacktriangleleft,\alpha)$ be a Hom-M-dendriform algebra. The Hom-M-dendri\-form algebra $(A,\blacktriangleright^{t},\blacktriangleleft^{t},\alpha)$
given by \eqref{transpose} is called the transpose of $(A,\blacktriangleright,\blacktriangleleft,\alpha)$.
\end{defn}

Examples of Hom-M-dendriform algebras can be constructed from Hom-pre-Malcev algebras
with $\mathcal{O}$-operators.
For brevity, we only give the study involving the associated
horizontal Hom-pre-Malcev algebras.
\begin{prop}
 Let $(A, \cdot, \alpha)$ be a Hom-pre-Malcev algebra and $(V, \ell, r, \beta)$ be a bimodule of $A$.  Let $T$ be an $\mathcal{O}$-operator of $(A, \cdot, \alpha)$  associated to $(V, \ell, r, \beta)$.
Then $(V, \blacktriangleright, \blacktriangleleft, \beta)$ is a Hom-M-dendriform algebra, where
for all $a, b \in V$,
\begin{equation}\label{hpm==>dend}
~a \blacktriangleright b = r(T(b))a, \quad a\blacktriangleleft b = \ell(T(a))b.
\end{equation}
Therefore, there is a Hom-pre-Malcev algebra on $V$ given in Theorem \ref{product} as the associated horizontal Hom-pre-Malcev algebra of $(V,\blacktriangleright,\blacktriangleleft, \beta)$, and $T$ is a morphism of Hom-pre-Malcev algebras. Moreover, $T(V) = \{T(v) \mid \; v \in V\} \subset A $ is a Hom-pre-Malcev subalgebra of $(A, \cdot,\alpha)$, and there is an induced Hom-M-dendriform  on $(T(V ),\triangleright,\triangleleft,\alpha)$ given for all $a, b \in V$ by
\begin{align*}
    T(a) \triangleright T(b)=T(a\blacktriangleright b),\quad  T(a) \triangleleft T(b)=T(a \blacktriangleleft b).
\end{align*}
Its corresponding associated horizontal Hom-pre-Malcev algebraic
structure on $T(V )$ is just the subalgebra of the Hom-pre-Malcev $(A, \cdot,\alpha)$, and $T$ is a
homomorphism of Hom-M-dendriform algebras.
\end{prop}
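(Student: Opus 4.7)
The plan is to verify each of the four Hom-M-dendriform axioms \eqref{dend1}--\eqref{dend4} for the operations $a \blacktriangleright b = r(T(b))a$ and $a \blacktriangleleft b = \ell(T(a))b$ on $V$, by rewriting each axiom as an equality between operators evaluated on $V$ and then matching it to the appropriate bimodule identity \eqref{rep2}--\eqref{rep5} for $(V,\ell,r,\beta)$. Before attacking the axioms, I would first check multiplicativity: combining \eqref{rep1} (which gives $\beta\ell(x) = \ell(\alpha(x))\beta$ and $\beta r(x) = r(\alpha(x))\beta$) with the $\mathcal{O}$-operator identity $T\beta = \alpha T$, a one-line computation yields $\beta(a \blacktriangleleft b) = \beta(a) \blacktriangleleft \beta(b)$ and similarly for $\blacktriangleright$.

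The key observation for the main verification is that the induced horizontal product $a \cdot b = a\blacktriangleleft b + a\blacktriangleright b = \ell(T(a))b + r(T(b))a$ satisfies $T(a \cdot b) = T(a)\cdot T(b)$, which is exactly the defining relation of an $\mathcal{O}$-operator, so $T$ also intertwines the $V$-bracket $[a,b] = \rho(T(a))b - \rho(T(b))a$ (with $\rho = \ell - r$) with the $A$-bracket. Consequently, whenever a nested product appears inside a left or right action in an axiom (as in $\ell(T(u\cdot v))$ or $\ell(T([u,v]))$), it can be converted to an operator evaluated at $T(u)\cdot T(v)$ or $[T(u),T(v)]$ in $A$, and powers of $\beta$ can be exchanged with powers of $\alpha$ under $T$. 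For \eqref{dend4}, only $\ell$ appears, and after this translation the identity reduces to the representation axiom \eqref{representation H-M} applied to $(V,\ell,\beta)$ at $T(x),T(y),T(z),T(t)$, which is precisely the content of Proposition \ref{rephompremalcev==rephommalcev}\,\ref{item1:prop:rephompremalcev==rephommalcev}. For \eqref{dend1}, \eqref{dend2}, \eqref{dend3}, the same dictionary converts each term into one of the patterns built from $\ell$ and $r$ appearing in \eqref{rep2}, \eqref{rep3}, \eqref{rep4} respectively, and the axiom becomes an exact match after grouping.

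For the remaining statements, the associated horizontal Hom-pre-Malcev structure of $(V,\blacktriangleright,\blacktriangleleft,\beta)$ given by Theorem \ref{product} is the product $a \cdot b = \ell(T(a))b + r(T(b))a$ already identified above, and $T$ is a morphism of Hom-pre-Malcev algebras by the $\mathcal{O}$-operator relation together with $T\beta = \alpha T$. Consequently $T(V)$ is closed under $\cdot$ and under $\alpha$, hence a Hom-pre-Malcev subalgebra of $(A,\cdot,\alpha)$. The induced operations $T(a) \triangleright T(b) := T(a \blacktriangleright b)$ and $T(a) \triangleleft T(b) := T(a \blacktriangleleft b)$ then inherit the Hom-M-dendriform axioms by applying $T$ to the already-verified identities on $V$; their sum reconstructs the subalgebra product $\cdot$ on $T(V)$, and by construction $T$ is a homomorphism of Hom-M-dendriform algebras.

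The principal obstacle is the combinatorial bookkeeping for \eqref{dend1}--\eqref{dend3}: each contains several mixed terms (nested $\blacktriangleleft, \blacktriangleright$, $\diamond$ and $[-,-]$ on elements already twisted by powers of $\beta$), so one must systematically expand $\diamond$, $\cdot$, and the brackets via $\ell, r, T$, push $\beta^{k}$ through $\ell, r, T$ using multiplicativity and $T\beta = \alpha T$, and only then identify which of \eqref{rep2}--\eqref{rep4} is being invoked. A secondary point worth flagging, but typically handled by the standard convention in the literature, is that the induced operations $\triangleright$ and $\triangleleft$ on $T(V)$ are defined via representatives in $V$; their sum $T(a)\cdot T(b)$ is intrinsic to $T(V)$ regardless of the choice, which supplies the consistency actually needed.
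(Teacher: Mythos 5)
Your proposal is correct and follows essentially the same route as the paper: expand $\blacktriangleright,\blacktriangleleft$ via $r(T(\cdot)),\ell(T(\cdot))$, use $T\circ\beta=\alpha\circ T$, \eqref{rep1} and the $\mathcal{O}$-operator relation $T(a)\cdot T(b)=T(\ell(T(a))b+r(T(b))a)$ to convert nested arguments into products and brackets in $A$, and match each of \eqref{dend1}--\eqref{dend4} with the corresponding bimodule identity \eqref{rep2}--\eqref{rep5}. The paper carries this out explicitly only for \eqref{dend1} against \eqref{rep2} and declares the rest similar, exactly as you outline.
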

\begin{proof}
For any $a, b, c, d \in V$, using \eqref{rep2} and Definition \ref{o-ophpm},
\begin{align*}
& (\beta(c)\diamond(b\diamond a))\blacktriangleright \beta^{2}(d)- \beta^{2}(a)\blacktriangleright (\beta (b)\cdot(c\cdot d))+\beta^{2}(c)\blacktriangleleft(\beta(a)\blacktriangleright(b\cdot d))\\
 & \quad+ \beta([b,c])\blacktriangleleft \beta(a\blacktriangleright d)-\beta^{2}(b)\blacktriangleleft((c\diamond a)\blacktriangleright \beta(d))\\
&=(\beta (c)\blacktriangleleft(b\blacktriangleleft a))\blacktriangleright\beta^{2}(d)-((b\blacktriangleleft a)\blacktriangleright \beta(c))\blacktriangleright \beta^{2}(d)- (\beta(c)\blacktriangleleft(a\blacktriangleright b))\blacktriangleright \beta^{2}(d)\\
&\quad+((a\blacktriangleright b)\blacktriangleright \beta(c))\blacktriangleright \beta^{2}(d)- \beta^{2}(a)\blacktriangleright (\beta (b)\blacktriangleleft(c\blacktriangleleft d))- \beta^{2}(a)\blacktriangleright(\beta (b)\blacktriangleright(c\blacktriangleleft d))\\
&\quad- \beta^{2}(a)\blacktriangleright(\beta (b)\blacktriangleleft(c\blacktriangleright d)) - \beta^{2}(a)\blacktriangleright(\beta(b)\blacktriangleright(c\blacktriangleright d))
 + \beta^{2}(c)\blacktriangleleft(\beta(a)\blacktriangleright(b\blacktriangleleft d))\\
 &\quad+ \beta^{2}(c)\blacktriangleleft(\alpha(a)\blacktriangleright(b\blacktriangleright d))+ \beta(b\blacktriangleleft c)\blacktriangleleft \beta(a\blacktriangleright d) + \beta(b\blacktriangleright c)\blacktriangleleft\beta(a\blacktriangleright d)\\
 &\quad- \beta(c\blacktriangleleft b)\blacktriangleleft\beta(a\blacktriangleright d)-\beta(c\blacktriangleright b)\blacktriangleleft \beta(a\blacktriangleright d)
 -\beta^{2}(b)\blacktriangleleft((c\blacktriangleleft a)\blacktriangleright \beta(d))\\
 &\quad+ \beta^{2}(b)\blacktriangleleft((a\blacktriangleright c)\blacktriangleright \beta(d))\\
 &= r(\alpha^{2}(T(d)))\ell(\alpha(T(c)))\ell(T(b))a- r(\alpha^{2}(T(d)))r(\alpha(T(c)))\ell(T(b))a \\
 &\quad- r(\alpha^{2}(T(d)))\ell(\alpha(T(c)))r(T(b))a+r(\alpha^{2}(T(d)))r(\alpha(T(c)))r(T(b))a\\
 &\quad-r(T(\ell(T(\beta(b)))\ell(T(c))d))\beta^{2}(a) - r(T(r(T(\ell(T(c))d))\beta(b)))\beta^{2}(a)\\
 &\quad-r(T(\ell(T(\beta(b)))r(T(d))c))\beta^{2}(a)- r(T(r(T(r(T(d))c))\beta(b)))\beta^{2}(a)\\
 &\quad+ \ell(\alpha^{2}(T(c)))r(T(\ell(T(b))d))\beta(a) + \ell(\alpha^{2}(T(c)))r(T(r(T(d))b))\beta(a)\\
 &\quad+ \ell(\alpha(T(\ell(T(b))c)))r(\alpha(T(d))a)+\ell(\alpha(T(r(T(c))b)))r(\alpha(T(d))a)\\
 &\quad- \ell(\alpha(T(\ell(T(c))b)))r(\alpha(T(d))a)-\ell(\alpha(T(r(T(b))c)))r(\alpha(T(d))a)\\
 &\quad- \ell(\alpha^{2}(T(b)))r(\alpha(T(d)))\ell(T(c))a +\ell(\alpha^{2}(T(b)))r(\alpha(T(d)))r(T(c))a\\
&=r(\alpha^{2}(T(d)))\ell(\alpha(T(c)))\rho(T(b))a- r(\alpha^{2}(T(d)))r(\alpha(T(c)))\rho(T(b))a \\
&\quad-r(T(\ell(T(\beta(b)))(T(c)\cdot T(d))))\beta^{2}(a)-r(T(r(T(c)\cdot T(d))\beta(b)))\beta^{2}(a)\\
&\quad+\ell(\alpha^{2}(T(c)))r(T(b)\cdot T(d))\beta(a) +\ell(\alpha(T(b)\cdot T(c)))r(\alpha(T(d))a)\\
&\quad-\ell(\alpha(T(c)\cdot T(b)))r(\alpha(T(d))a) - \ell(\alpha^{2}(T(b)))r(\alpha(T(d)))\rho(T(c))a\\
& = r(\alpha^{2}(T(d)))\rho(\alpha(T(c)))\rho(T(b))a -r(\alpha(T(b))\cdot(T(c)\cdot T(d)))\beta^{2}(a)\\
 &\quad+\ell(\alpha^{2}(T(c)))r(T(b)\cdot T(d))\beta(a)+\ell(\alpha([T(b),T(c)]))r(\alpha(T(d))a)\\
 &\quad- \ell(\alpha^{2}(T(b)))r(\alpha(T(d)))\rho(T(c))a =0.
\end{align*}
Using a similar computation,    \eqref{dend2}-\eqref{dend4}  can be checked. Therefore, $(V, \blacktriangleright, \blacktriangleleft, \beta)$ is a Hom-M-dendriform algebra.
The other conclusions follow easily. \qedhere
\end{proof}
Now, we introduce the following concept of Rota-Baxter operator on a Hom-pre-Malcev algebra which is a particular case of $\mathcal{O}$-operator.
 \begin{defn}
 Let $(A, \cdot, \alpha)$ be a Hom-pre-Malcev algebra. A linear map $\mathcal{R}: A \longrightarrow A$ is
called a Rota-Baxter operator of weight zero on A if for all $x, y \in A$,
$$\mathcal{R}\circ\alpha = \alpha\circ \mathcal{R}, \quad \quad
\mathcal{R}(x)\cdot \mathcal{R}(y) = \mathcal{R}\big(\mathcal{R}(x)\cdot y + x\cdot \mathcal{R}(y)\big).$$
\end{defn}
\begin{cor}\label{HPM==>HMD by rota-baxter}
Let $(A, \cdot, \alpha)$ be a Hom-pre-Malcev algebra and $\mathcal{R} : A \to A$ be a Rota-Baxter operator of weight $0$ for $A$. Define new operations on $A$ by
$$x \blacktriangleright  y = x \cdot \mathcal{R}(y), \quad \quad
x \blacktriangleleft y = \mathcal{R}(x) \cdot y.$$
Then $(A,\blacktriangleright, \blacktriangleleft, \alpha)$ is a Hom-M-dendriform algebra.
\end{cor}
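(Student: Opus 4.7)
The plan is to deduce this corollary as a direct specialization of the preceding proposition (the one that builds a Hom-M-dendriform algebra from an $\mathcal{O}$-operator of a Hom-pre-Malcev algebra associated to a bimodule). The key observation is that a Rota-Baxter operator of weight zero on a Hom-pre-Malcev algebra $(A,\cdot,\alpha)$ is nothing but an $\mathcal{O}$-operator with respect to the \emph{regular bimodule} $(A, L_\cdot, R_\cdot, \alpha)$, where $L_\cdot(x)y = x\cdot y$ and $R_\cdot(x)y = y\cdot x$. Indeed, the axioms $\mathcal{R}\alpha = \alpha\mathcal{R}$ and $\mathcal{R}(x)\cdot\mathcal{R}(y) = \mathcal{R}\bigl(\mathcal{R}(x)\cdot y + x\cdot\mathcal{R}(y)\bigr)$ are exactly the defining identities of an $\mathcal{O}$-operator from Definition \ref{o-ophpm} with $T=\mathcal{R}$, $V=A$, $\beta=\alpha$, $\ell=L_\cdot$, $r=R_\cdot$.

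First I would check that $(A,L_\cdot,R_\cdot,\alpha)$ really is a bimodule of $(A,\cdot,\alpha)$ in the sense of Definition \ref{representation HPM}. The intertwining conditions \eqref{rep1} are immediate from multiplicativity $\alpha(x\cdot y)=\alpha(x)\cdot\alpha(y)$. For the remaining identities \eqref{rep2}--\eqref{rep5}, note that for the regular bimodule $\rho = L_\cdot - R_\cdot$ satisfies $\rho(x)y = x\cdot y - y\cdot x = [x,y]$. Evaluating each identity on an arbitrary element $s\in A$ and rewriting via multiplicativity of $\alpha$, each of \eqref{rep2}--\eqref{rep5} becomes, up to relabeling of variables, an instance of the defining Hom-pre-Malcev identity \eqref{HPM}. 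For example, \eqref{rep5} applied to $s$ produces exactly \eqref{HPM} (with $t\leftrightarrow s$), while \eqref{rep2}--\eqref{rep4} are the same identity \eqref{HPM} with the bracket $[\,,\,]$ expanded in different positions.

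Second, I would apply the preceding proposition to the triple $(T,V,\beta) = (\mathcal{R},A,\alpha)$ with bimodule $(A,L_\cdot,R_\cdot,\alpha)$. That proposition yields a Hom-M-dendriform algebra on $V=A$ defined by \eqref{hpm==>dend}, namely
\begin{align*}
a \blacktriangleright b &= r(T(b))a = R_\cdot(\mathcal{R}(b))a = a\cdot\mathcal{R}(b),\\
a \blacktriangleleft b &= \ell(T(a))b = L_\cdot(\mathcal{R}(a))b = \mathcal{R}(a)\cdot b,
\end{align*}
which matches the operations in the statement. Multiplicativity of $\alpha$ with respect to $\blacktriangleright,\blacktriangleleft$ follows from $\mathcal{R}\alpha=\alpha\mathcal{R}$ and multiplicativity of $\cdot$.

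The main obstacle is the bookkeeping step of verifying the regular bimodule axioms \eqref{rep2}--\eqref{rep5}: the expressions are long and each identity must be matched against \eqref{HPM} after a specific relabeling. Once this (essentially formal) reduction is accomplished, the corollary is an immediate application of the preceding proposition, and no further computation involving the Hom-M-dendriform axioms \eqref{dend1}--\eqref{dend4} is needed.
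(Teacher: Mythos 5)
Your proposal is correct and follows the paper's own (implicit) route: the paper states this corollary without proof, immediately after remarking that a Rota-Baxter operator of weight zero on a Hom-pre-Malcev algebra is the particular case of an $\mathcal{O}$-operator associated to the regular bimodule $(A,L_\cdot,R_\cdot,\alpha)$, so the result is exactly the specialization of the preceding proposition that you describe, with $T=\mathcal{R}$, $V=A$, $\beta=\alpha$. Your observation that the regular-bimodule axioms \eqref{rep2}--\eqref{rep5} reduce, by multilinearity of the identity \eqref{HPM} in each argument, to single instances of \eqref{HPM} with the module element placed in one slot is the right way to fill in the verification the paper leaves unstated.
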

\begin{lem}\label{commuting rotabaxter}
Let $\mathcal{R}_1$ and $\mathcal{R}_2$ be two commuting Rota-Baxter operators $($of weight
zero$)$ on a Hom-Malcev algebra $(A, [-, -],\alpha)$. Then $\mathcal{R}_2$ is a Rota-Baxter operator $($of weight zero$)$ on the Hom-pre-Malcev algebra $(A,\cdot,\alpha)$, where for all $x,y \in A$,
$$x \cdot y= [\mathcal{R}_1(x),  y].$$
\end{lem}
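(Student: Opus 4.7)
The plan is to verify the two defining conditions of a Rota--Baxter operator for $\mathcal{R}_2$ on the Hom-pre-Malcev algebra $(A,\cdot,\alpha)$, namely the compatibility with $\alpha$ and the Rota--Baxter identity with respect to $"\cdot"$. The first condition, $\mathcal{R}_2\circ\alpha=\alpha\circ\mathcal{R}_2$, comes for free because it is already assumed as part of $\mathcal{R}_2$ being a Rota--Baxter operator on the Hom-Malcev algebra $(A,[-,-],\alpha)$. So the entire content of the lemma is the Rota--Baxter identity for $"\cdot"$.

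The main computation is a short chain of substitutions. First, I would unfold the left-hand side using the definition $x\cdot y=[\mathcal{R}_1(x),y]$, obtaining
\[
\mathcal{R}_2(x)\cdot \mathcal{R}_2(y)=[\mathcal{R}_1(\mathcal{R}_2(x)),\mathcal{R}_2(y)].
\]
Then I would use the commutativity hypothesis $\mathcal{R}_1\mathcal{R}_2=\mathcal{R}_2\mathcal{R}_1$ to rewrite this as $[\mathcal{R}_2(\mathcal{R}_1(x)),\mathcal{R}_2(y)]$. Now $\mathcal{R}_2$ is Rota--Baxter on $(A,[-,-],\alpha)$, so applied to the pair $(\mathcal{R}_1(x),y)$ it gives
\[
[\mathcal{R}_2(\mathcal{R}_1(x)),\mathcal{R}_2(y)] = \mathcal{R}_2\bigl([\mathcal{R}_2(\mathcal{R}_1(x)),y]+[\mathcal{R}_1(x),\mathcal{R}_2(y)]\bigr).
\]
Commuting $\mathcal{R}_1$ and $\mathcal{R}_2$ back inside the first bracket yields $[\mathcal{R}_1(\mathcal{R}_2(x)),y]=\mathcal{R}_2(x)\cdot y$, and the second bracket is $x\cdot\mathcal{R}_2(y)$ directly from the definition of $"\cdot"$. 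Assembling the pieces gives $\mathcal{R}_2(\mathcal{R}_2(x)\cdot y+x\cdot\mathcal{R}_2(y))$, which is exactly the desired Rota--Baxter identity.

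There is no real obstacle here; the only subtlety is bookkeeping the two roles of $\mathcal{R}_2$, first as a Rota--Baxter operator on the ambient Hom-Malcev algebra (to kill the bracket $[\mathcal{R}_2(\mathcal{R}_1(x)),\mathcal{R}_2(y)]$), and then recognising the output as $\mathcal{R}_2$ applied to an expression of the form $\mathcal{R}_2(x)\cdot y+x\cdot\mathcal{R}_2(y)$. Crucially, commutativity of $\mathcal{R}_1$ and $\mathcal{R}_2$ is used twice, first to move $\mathcal{R}_2$ past $\mathcal{R}_1$ before invoking the Rota--Baxter axiom, and again to move it back so that the result can be expressed purely through the Hom-pre-Malcev product. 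The proof is then complete in a few lines.
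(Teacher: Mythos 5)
Your proof is correct and follows essentially the same route as the paper: unfold $\mathcal{R}_2(x)\cdot\mathcal{R}_2(y)=[\mathcal{R}_1(\mathcal{R}_2(x)),\mathcal{R}_2(y)]$, use commutativity of $\mathcal{R}_1$ and $\mathcal{R}_2$ to apply the Rota--Baxter identity for $\mathcal{R}_2$ on the bracket with the pair $(\mathcal{R}_1(x),y)$, and read off the result as $\mathcal{R}_2(\mathcal{R}_2(x)\cdot y+x\cdot\mathcal{R}_2(y))$. You are in fact slightly more explicit than the paper, which silently absorbs the two uses of commutativity and does not mention the $\alpha$-compatibility condition at all.
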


\begin{proof}
For any $x,y \in A$,
\begin{align*}
 \mathcal{R}_2(x) \cdot \mathcal{R}_2(y)=& [\mathcal{R}_1(\mathcal{R}_2(x)), \mathcal{R}_2(y)] \\
=& \mathcal{R}_2([\mathcal{R}_1(\mathcal{R}_2(x)), y] + [\mathcal{R}_1(x), \mathcal{R}_2(y)]) \\
=& \mathcal{R}_2(\mathcal{R}_2(x) \cdot y + x \cdot \mathcal{R}_2(y)).
\qedhere \end{align*}
\end{proof}

\begin{cor}
Let $\mathcal{R}_1$ and $\mathcal{R}_2$ be two commuting Rota-Baxter operators
$($of weight zero$)$ on a Hom-Malcev algebra $(A, [-, -],\alpha)$. Then there exists a Hom-M-dendriform algebra structure on $A$ given for all $x,y \in A$ by
\begin{align}\label{ rota-baxter HMD}
    x \blacktriangleright y= [\mathcal{R}_1(x),  \mathcal{R}_2(y)],\quad   x \blacktriangleleft y=[\mathcal{R}_1(\mathcal{R}_2(x)),  y].
\end{align}
\end{cor}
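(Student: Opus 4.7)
The plan is to obtain this corollary by chaining together three results already established in the paper, rather than verifying the four Hom-M-dendriform identities \eqref{dend1}--\eqref{dend4} directly (which would be an extremely long computation).

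First, I would invoke the corollary immediately following Proposition \ref{hommalcev==>hompremalcev}: since $\mathcal{R}_1$ is a Rota-Baxter operator of weight zero on the Hom-Malcev algebra $(A,[-,-],\alpha)$, the bilinear operation
\[
x\cdot y=[\mathcal{R}_1(x),y]
\]
endows $A$ with a Hom-pre-Malcev algebra structure $(A,\cdot,\alpha)$. Next, by Lemma \ref{commuting rotabaxter}, the commutativity of $\mathcal{R}_1$ and $\mathcal{R}_2$ together with the Rota-Baxter identity for $\mathcal{R}_2$ on $(A,[-,-],\alpha)$ implies that $\mathcal{R}_2$ is also a Rota-Baxter operator of weight zero on the Hom-pre-Malcev algebra $(A,\cdot,\alpha)$ just constructed. (One should also verify compatibility with $\alpha$, namely $\mathcal{R}_2\circ\alpha=\alpha\circ\mathcal{R}_2$, which holds by hypothesis on $\mathcal{R}_2$.)

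Finally, I would apply Corollary \ref{HPM==>HMD by rota-baxter} to the Hom-pre-Malcev algebra $(A,\cdot,\alpha)$ and its Rota-Baxter operator $\mathcal{R}_2$: this yields a Hom-M-dendriform algebra structure on $A$ with products
\[
x\blacktriangleright y=x\cdot\mathcal{R}_2(y),\qquad x\blacktriangleleft y=\mathcal{R}_2(x)\cdot y.
\]
Substituting the definition $x\cdot y=[\mathcal{R}_1(x),y]$ into these formulas gives
\[
x\blacktriangleright y=[\mathcal{R}_1(x),\mathcal{R}_2(y)],\qquad x\blacktriangleleft y=[\mathcal{R}_1(\mathcal{R}_2(x)),y],
\]
which are precisely the operations in \eqref{ rota-baxter HMD}.

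The only non-routine step is the middle one, Lemma \ref{commuting rotabaxter}, which has already been proved in the excerpt; the chaining itself is formal. The main subtlety to watch for is checking that the twisting map condition $\mathcal{R}_2\circ\alpha=\alpha\circ\mathcal{R}_2$ (part of the definition of a Rota-Baxter operator on a Hom-pre-Malcev algebra) is automatically inherited from the Hom-Malcev setting, which is immediate since the twisting map $\alpha$ is unchanged when passing from $(A,[-,-],\alpha)$ to $(A,\cdot,\alpha)$. No further identity verification beyond the cited results is needed.
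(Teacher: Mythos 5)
Your proposal is correct and follows essentially the same route as the paper's own proof: apply Lemma \ref{commuting rotabaxter} to see that $\mathcal{R}_2$ is a Rota-Baxter operator on the Hom-pre-Malcev algebra $(A,\cdot,\alpha)$ with $x\cdot y=[\mathcal{R}_1(x),y]$, then invoke Corollary \ref{HPM==>HMD by rota-baxter} and substitute. Your extra remark about the compatibility $\mathcal{R}_2\circ\alpha=\alpha\circ\mathcal{R}_2$ being inherited is a harmless (and welcome) addition that the paper leaves implicit.
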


\begin{proof}
By Lemma \ref{commuting rotabaxter}, $\mathcal{R}_2$ is a Rota-Baxter operator of weight zero on $(A,\cdot,\alpha)$, where
$$x \cdot y=[ \mathcal{R}_1(x), y].$$
Then, applying Corollary \ref{HPM==>HMD by rota-baxter}, there exists a Hom-M-dendriform algebraic structure on $A$ given for all $x,y \in A$ by
\begin{align*}
   & x \blacktriangleright y=x\cdot \mathcal{R}_2(y)= [\mathcal{R}_1(x), \mathcal{R}_2(y)],\\
    &  x \blacktriangleleft y= \mathcal{R}_2(x)\cdot y=[\mathcal{R}_1(\mathcal{R}_2(x)), y].
\qedhere \end{align*}
\end{proof}
\begin{ex}
Consider the $4$-dimensional Hom-Malcev algebra and the Rota-Baxter operators $\mathcal{R}$ in Example \ref{ex:4dmalcev}.
Then, there is a Hom-M-dendriform algebraic structure on $A$ given for all $x,y \in A$ by
$$x \blacktriangleright_{\alpha} y= \alpha([\mathcal{R}(x),  \mathcal{R}(y)]),\quad \quad  x \blacktriangleleft_{\alpha} y= \alpha([\mathcal{R}^2(x),y]),$$  that is
$$
\begin{array}{c|cccc}
  \blacktriangleright_{\alpha}  & e_1 & e_2 & e_3 &  e_4 \\
  \hline
  e_1& 0 & \lambda_1 e_3   & 0 & 0 \\
  e_2 & -\lambda_1 e_3  &   0 & 0 & 0\\
  e_3 & 0 & 0  & 0 & 0\\
  e_4 &  0 & 0  & 0 & 0
  \end{array} \quad \quad \quad
\begin{array}{c|cccc}
  \blacktriangleleft_{\alpha}  & e_1 & e_2 & e_3 &  e_4 \\
  \hline
 e_1 & \frac{a_4}{2}e_4  & -\alpha(e_2) & e_3 & -e_4 \\
  e_2 & 0  & 0 &  0 & 0\\
  e_3 & 0 & 0  & 0 & 0\\
  e_4 &  0 & 0  & 0 & 0
  \end{array}.$$
\end{ex}
\begin{ex}
Consider the $5$-dimensional Hom-Malcev algebra and Rota-Baxter operators $\mathcal{R}$ in Example \ref{ex:5dmalcev}.
Then, there is a Hom-M-dendriform algebraic structure on $A$ given for all $x,y \in A$ by
$$x \blacktriangleright_{\alpha} y= \alpha([\mathcal{R}(x),  \mathcal{R}(y)]),\quad \quad
x \blacktriangleleft_{\alpha} y=\alpha([\mathcal{R}^2(x),  y]), $$
that is
$$
\begin{array}{c|ccccc}
  \blacktriangleright_{\alpha}  & e_1 & e_2 & e_3 &  e_4  & e_5\\
  \hline
  e_1& 0 & 0  & 0 & b e_3 & 0 \\
  e_2 & 0  &   0 & 0 & 0 & 0\\
  e_3 & 0 & 0  & 0 & 0 & 0\\
  e_4 & -b e_3 & 0  & 0 & 0 & 0\\
  e_5 & 0 & 0  & 0 & 0 & 0
  \end{array} \quad \quad \quad
\begin{array}{c|ccccc}
  \blacktriangleleft_{\alpha}  & e_1 & e_2 & e_3 &  e_4 & e_5 \\
  \hline
 e_1 & -a_4e_2  &- a_5e_3 & 0 & e_2 & \frac{- b a_4 }{a_5}e_3 \\
  e_2 & 0  & 0 &  0 & 0 & 0\\
  e_3 & 0 & 0  & 0 & 0 & 0\\
  e_4 &  0 & 0  & 0 & 0 & 0\\
  e_5 &  0 & 0  & 0 & 0 & 0
  \end{array}.$$
\end{ex}
Hom-M-dendriform algebras are related to Hom-alternative quadri-algebras in the same way Hom-L-dendriform algebras are related to Hom-quadri-algebras (see \cite{ChtiouiMabroukMakhlouf2} for more details).
\begin{defn}\label{def:altquad}
    Hom-alternative quadri-algebra is a 6-tuple $(A, \nwarrow, \swarrow, \nearrow, \searrow, \alpha)$
consisting of a vector  space $A$, four bilinear maps $\nwarrow,
\swarrow, \nearrow, \searrow: A \times A\rightarrow A$ and a linear map $\alpha
: A\rightarrow A$ which is algebra morphism such that the following axioms are satisfied for all $x, y, z\in A:$
    \begin{alignat*}{4}
       \las x,y,z \ras^{r}_{\alpha} + \las y,x,z \ras^{m}_{\alpha} &= 0, &
        \qquad \qquad
        \las x,y,z \ras^{r}_{\alpha} + \las x,z,y \ras^{r}_{\alpha} &= 0,
        \\
        \las x,y,z \ras^{n}_{\alpha} + \las y,x,z \ras^{w}_{\alpha} &= 0, &
        \qquad \qquad
        \las x,y,z \ras^{n}_{\alpha} + \las x,z,y \ras^{ne}_{\alpha} &= 0,
        \\
        \las x,y,z \ras^{ne}_{\alpha} + \las y,x,z \ras^{e}_{\alpha} &= 0, &
        \qquad \qquad
        \las x,y,z \ras^{w}_{\alpha} + \las x,z,y \ras^{sw}_{\alpha} &= 0,
        \\
        \las x,y,z \ras^{sw}_{\alpha} + \las y,x,z \ras^{s}_{\alpha} &= 0, &
        \qquad \qquad
        \las x,y,z \ras^{m}_{\alpha} + \las x,z,y \ras^{\ell}_{\alpha} &= 0,
        \\
        \las x,y,z \ras^{\ell}_{\alpha} + \las y,x,z \ras^{\ell}_{\alpha} &= 0, &
    \end{alignat*}
where
    \begin{alignat*}{4}
        \las x,y,z \ras^{r}_{\alpha} &= ( x \nwarrow y ) \nwarrow \alpha(z) - \alpha(x) \nwarrow ( y \ast z ) && \quad \text{\rm (right $\alpha$-associator)} \\
        \las x,y,z \ras^{\ell}_{\alpha} &= ( x \ast y ) \searrow \alpha(z) - \alpha(x) \searrow ( y \searrow z ) && \quad \text{\rm (left $\alpha$-associator)} \\
        \las x,y,z \ras^{m}_{\alpha} &= ( x \searrow y ) \nwarrow \alpha(z) - \alpha(x) \searrow ( y \nwarrow z )  && \quad \text{\rm (middle $\alpha$-associator)}\\
        \las x,y,z \ras^{n}_{\alpha} &= ( x \nearrow y ) \nwarrow \alpha(z) - \alpha(x) \nearrow ( y \prec z ) && \quad \text{\rm (north $\alpha$-associator)} \\
        \las x,y,z \ras^{w}_{\alpha} &= ( x \swarrow y ) \nwarrow \alpha(z) - \alpha(x) \swarrow ( y \wedge z) && \quad \text{\rm (west $\alpha$-associator)} \\
        \las x,y,z \ras^{s}_{\alpha} &= ( x \succ y ) \swarrow \alpha(z) - \alpha(x) \searrow ( y \swarrow z ) && \quad \text{\rm (south $\alpha$-associator)} \\
        \las x,y,z \ras^{e}_{\alpha} &= ( x \vee y ) \nearrow \alpha(z) - \alpha(x) \searrow ( y \nearrow z ) && \quad \text{\rm (east $\alpha$-associator)} \\
        \las x,y,z \ras^{ne}_{\alpha} &= ( x \wedge y ) \nearrow \alpha(z) - \alpha(x) \nearrow ( y \succ z ) && \quad \text{\rm (north-east $\alpha$-associator)} \\
        \las x,y,z \ras^{sw}_{\alpha} &= ( x \prec y ) \swarrow \alpha(z) - \alpha(x) \swarrow ( y \vee z ) && \quad \text{\rm (south-west $\alpha$-associator)}
   \end{alignat*}
   \begin{alignat*}{3}
   x \succ y&= x \nearrow y+ x \searrow y ,  & x \prec y= x \nwarrow y + x \swarrow y, \\
   x \vee y&= x \searrow y+ x \swarrow y,   & x \wedge y= x \nearrow y + x \nwarrow y, \\
        x \ast y   &= x \succ y + x \prec y &= x \searrow y + x \nearrow y + x \nwarrow y + x \swarrow y.
    \end{alignat*}
\end{defn}
\begin{lem}
    Let $(A,\nearrow, \searrow, \swarrow, \nwarrow,\alpha)$ be some Hom-alternative quadri-algebra. Then
    $(A,\prec,\succ,\alpha)$ and $(A,\vee,\wedge,\alpha)$ are Hom-pre-alternative algebras
    (called respectively horizontal and vertical Hom-pre-alternative structures associated to $A$),
    and $(A,\ast,\alpha)$ is a Hom-alternative algebra.
\end{lem}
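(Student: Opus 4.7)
The plan is to verify each defining identity of the two Hom-pre-alternative algebras by direct expansion, matching the resulting terms against the nine $\alpha$-associators of the quadri-algebra. Concretely, for $(A,\prec,\succ,\alpha)$ I would substitute $\prec=\nwarrow+\swarrow$, $\succ=\nearrow+\searrow$, and $\ast=\nwarrow+\swarrow+\nearrow+\searrow$ into each of the four Hom-pre-alternative axioms, and then regroup the resulting terms according to their outermost binary operation.

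Each such group can be rewritten as a sum of a single $\alpha$-associator $\las x,y,z\ras^{\bullet}_\alpha$ plus a correction term of the shape $\alpha(x)\Box(y\,\square\,z)$, where $\Box,\square$ are among $\{\nwarrow,\swarrow,\nearrow,\searrow,\prec,\succ,\wedge,\vee\}$; this is precisely the generic form of the nine quadri-associators listed in Definition~\ref{def:altquad}. Using the relations $y\ast z=y\wedge z+y\vee z=y\prec z+y\succ z$, the correction terms coming from different groups cancel in pairs, leaving a pure sum of $\alpha$-associators. For instance, the fourth Hom-pre-alternative identity should collapse, after this bookkeeping, to $\las x,y,z\ras^r_\alpha+\las x,z,y\ras^r_\alpha+\las x,y,z\ras^w_\alpha+\las x,z,y\ras^{sw}_\alpha+\las x,z,y\ras^w_\alpha+\las x,y,z\ras^{sw}_\alpha=0,$ which vanishes by the quadri-algebra axioms $\las x,y,z\ras^r_\alpha+\las x,z,y\ras^r_\alpha=0$ and $\las x,y,z\ras^w_\alpha+\las x,z,y\ras^{sw}_\alpha=0$. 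The remaining three Hom-pre-alternative axioms are treated in the same way, each one triggering a different subset of the nine quadri-algebra identities (the first and third axioms will draw on the $n,ne,e,\ell$ and $s,m$ associators, reflecting the symmetry of the nine identities).

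The vertical case $(A,\vee,\wedge,\alpha)$ will follow by an entirely analogous argument, essentially obtained by rotating the ``compass'' labels of the quadri-algebra: the roles of $\nwarrow\leftrightarrow\nearrow$ and $\swarrow\leftrightarrow\searrow$ are swapped, and correspondingly $\las\cdot\ras^{r}_\alpha\leftrightarrow\las\cdot\ras^{\ell}_\alpha$, $\las\cdot\ras^w_\alpha\leftrightarrow\las\cdot\ras^n_\alpha$, etc. Finally, the Hom-alternative property of $(A,\ast,\alpha)$ is immediate once either of the two Hom-pre-alternative structures is established, by invoking the preceding proposition of the paper which asserts that the sum $\ast=\prec+\succ$ of the two operations of any Hom-pre-alternative algebra defines a Hom-alternative structure.

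The main obstacle will be the purely combinatorial bookkeeping: each Hom-pre-alternative identity expands into twelve to twenty monomials, and the signs of the residual correction terms must be tracked carefully to confirm they cancel pairwise. No conceptual difficulty beyond the systematic decomposition is expected, but the sheer size of the expansion makes careful organization (for example, a table indexed by outer and inner operations) essential to avoid sign errors.
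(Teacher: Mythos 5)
The paper states this lemma without any proof, so your argument has to stand on its own. For the horizontal structure $(A,\prec,\succ,\alpha)$ your strategy is correct and your sample computation checks out: substituting $\prec=\nwarrow+\swarrow$, $\succ=\nearrow+\searrow$ and splitting the inner $\ast$ as $\wedge+\vee$ or $\prec+\succ$ as appropriate, the fourth pre-alternative axiom becomes exactly the six associators you display, the first becomes $\las x,y,z\ras^{n}_\alpha+\las y,x,z\ras^{w}_\alpha+\las x,y,z\ras^{m}_\alpha+\las y,x,z\ras^{r}_\alpha+\las x,y,z\ras^{s}_\alpha+\las y,x,z\ras^{sw}_\alpha$, and the third becomes $\las x,y,z\ras^{\ell}_\alpha+\las y,x,z\ras^{\ell}_\alpha+\las x,y,z\ras^{ne}_\alpha+\las y,x,z\ras^{e}_\alpha+\las y,x,z\ras^{ne}_\alpha+\las x,y,z\ras^{e}_\alpha$; all of these vanish by directly quoted axioms, and the claim for $(A,\ast,\alpha)$ then follows from the preceding proposition as you say. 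There is, however, a concrete gap in the second axiom: it decomposes as $\las x,y,z\ras^{n}_\alpha+\las x,z,y\ras^{ne}_\alpha+\las x,y,z\ras^{m}_\alpha+\las x,z,y\ras^{\ell}_\alpha+\las x,y,z\ras^{s}_\alpha+\las x,z,y\ras^{e}_\alpha$, and the last pair is \emph{not} among the nine listed axioms. You must first derive $\las x,y,z\ras^{s}_\alpha+\las x,z,y\ras^{e}_\alpha=0$ by chaining five of the listed relations ($s\to sw\to w\to n\to ne\to e$). Saying the remaining axioms are ``treated in the same way'' hides this step.

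The second problem is that the symmetry you invoke for the vertical case is the wrong one. The swap $\nwarrow\leftrightarrow\nearrow$, $\swarrow\leftrightarrow\searrow$ sends $\prec\mapsto\succ$ and \emph{fixes} both $\wedge$ and $\vee$, so it does not carry the horizontal structure to the vertical one; moreover it does not even permute the nine associators (it sends $\las x,y,z\ras^{r}_\alpha$ to $(x\nearrow y)\nearrow\alpha(z)-\alpha(x)\nearrow(y\ast z)$, which is none of them). The correct transpose is $\nearrow\leftrightarrow\swarrow$ with $\nwarrow$ and $\searrow$ fixed: it sends $\prec\mapsto\wedge$, $\succ\mapsto\vee$, fixes $r$, $\ell$, $m$, exchanges $n\leftrightarrow w$, $e\leftrightarrow s$, $ne\leftrightarrow sw$, and one checks it preserves the set of nine axioms. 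In particular, in the vertical structure it is $\wedge$ that plays the role of $\prec$ and $\vee$ that plays the role of $\succ$, despite the order in which the lemma lists them; with the opposite assignment the expression $(x\vee y)\vee\alpha(z)-\alpha(x)\vee(y\ast z)$ does not decompose into associators at all. With these two repairs (the derived $s$--$e$ relation, and the corrected transpose or, equivalently, a direct verification of the vertical axioms with $\wedge$ as $\prec$) your argument goes through.
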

Definition \ref{def:bimodhomprealt} introduces the notion of bimodule of Hom-pre-alternative algebras.
\begin{defn} \label{def:bimodhomprealt}
Let $( A,\prec,\succ,\alpha)$ be a Hom-pre-alternative algebra and $(V,\beta)$ a vector space. Let $L_\succ,R_\succ,L_\prec,,R_\prec:  A\to gl(V)$ be linear maps.
Then, $(V ,L_\succ,R_\succ,L_\prec,,R_\prec,\beta)$ ia called a
representation or a bimodule of $( A\prec,\succ,\alpha)$ if for any $x, y \in A$,
\begin{eqnarray}
 L_\succ(x\ast y+y\ast x)\beta=L_\succ(\alpha(x))L_\succ(y)+L_\succ(\alpha(y))L_\succ(x),\label{pabm1}\\
R_\succ(\alpha(y))(L(x)+R(x))=L_\succ(\alpha(x))R_\succ(y)+R_\succ(x\succ y)\beta,\\
R_\prec(\alpha(y))L_\succ(x)+R_\prec(\alpha(y))R_\prec(x)=L_\succ(\alpha(x))R_\prec(y)+R_\prec(x\ast y)\beta,\\
R_\prec(\alpha(y))R_\succ(x)+R_\succ(\alpha(y))L_\prec(x)=L_\prec(\alpha(x))R(y)+R_\succ(x\ast y)\beta,\\
L_\prec(y\prec x)\beta+L_\prec(x\succ y)\beta=L_\prec(\alpha(y))L(x)+L_\succ(\alpha(y))L_\succ(x),\\
R_\prec(\alpha(x))L_\succ(y)+L_\succ(y\succ x)\beta=L_\succ(y)R_\prec(x)+L_\succ(\alpha(y))L_\succ(x),\\
R_\prec(\alpha(x))R_\succ(y)+R_\succ(\alpha(y))R(x)=R_\succ(y\prec x)\beta+R_\succ(x\succ y)\beta,\\
L_\prec(y\succ x)\beta+R_\succ(\alpha(x))L(y)=L_\succ(\alpha(y))L_\prec(x)+L_\succ(\alpha(y))R_\succ(y),\\
R_\prec(\alpha(x))R_\prec(y)+R_\prec(\alpha(y))R_\prec(x)=R_\prec(x\ast y+y\ast x)\beta,\\
R_\prec(\alpha(y))L_\prec(x)+L_\prec(x\prec y)\beta=L_\prec(\alpha(x))(R(y)+L(y)),\label{pabm10}
\end{eqnarray}
where $x\ast y=x\prec y+y\succ x$, $L=L_\prec+L_\succ$ and $R=R_\prec+R_\succ$.
\end{defn}
\begin{prop}\label{directsumhomprealt}
A tuple
$(V,L_\succ,R_\succ,L_\prec,R_\prec,\beta)$ is a bimodule of a Hom-pre-alter\-native algebra
$( A,\prec,\succ,\alpha)$  if and only if the direct sum $( A\oplus V, \ll,\gg,\alpha+\beta)$ is a Hom-pre-alternative algebra, where for any $x,y \in  A, a,b \in V$,
\begin{align*}
  (x+a)\ll (y+b) &= x\prec y+L_\prec(x)b+R_\prec(y)a,\\
   (x+a)\gg (y+b) &= x\succ y+ L_\succ(x)b+R_\succ(x)a, \\
 (\alpha\oplus\beta)(x+a) &= \alpha(x)+\beta(a).
\end{align*}
We denote it by $A \ltimes^{\alpha,\beta}_{L_\succ,R_\succ,L_\prec,R_\prec} V$ or simply $A \ltimes V$.
\end{prop}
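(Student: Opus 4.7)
The proof proceeds by direct verification in both directions, treating the $A$-component and the $V$-component of each defining identity separately. My plan is to plug the elements $x+a, y+b, z+c \in A \oplus V$ into each of the four defining axioms of a Hom-pre-alternative algebra written for the operations $(\ll, \gg)$, expand using the formulas
\[
(x+a)\ll(y+b)=x\prec y + L_\prec(x)b + R_\prec(y)a, \qquad (x+a)\gg(y+b)=x\succ y + L_\succ(x)b + R_\succ(y)a,
\]
and sort the resulting terms by their $A$-part and $V$-part. The $A$-part will reproduce exactly the four Hom-pre-alternative axioms for $(A,\prec,\succ,\alpha)$ (so it vanishes whenever $A$ is itself Hom-pre-alternative), while the $V$-part will be a linear combination of terms involving a single copy of one of $a,b,c$, which must vanish in each such copy independently.

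First I would check multiplicativity of $\alpha+\beta$ with respect to $\ll$ and $\gg$: this reduces to the stated multiplicativity of $\alpha$ together with the first pair of intertwining identities $\beta L_\succ(x)=L_\succ(\alpha(x))\beta$ etc., which are implicit in the bimodule data (and which follow from the explicit identities by a standard argument). Next, for each of the four Hom-pre-alternative defining identities, I would extract the three $V$-linear slices, namely the slice in which only $a$ appears, the slice in which only $b$ appears, and the slice in which only $c$ appears. Because the four Hom-pre-alternative identities are symmetric/cyclic in a controlled way and because $x\ast y = x\prec y+x\succ y$ (hence $L=L_\prec+L_\succ$ and $R=R_\prec+R_\succ$), these twelve slices produce precisely the ten bimodule relations \eqref{pabm1}--\eqref{pabm10}, possibly with two slices being automatically equivalent to others by symmetry. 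This gives the ``only if'' direction.

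The ``if'' direction is essentially the same computation read in reverse: if $(A\oplus V,\ll,\gg,\alpha+\beta)$ is a Hom-pre-alternative algebra, then in particular the identities must hold when one takes arguments purely in $A$ (giving the Hom-pre-alternative identities for $(A,\prec,\succ,\alpha)$) and when two of the three arguments lie in $A$ and the third in $V$. The latter, as $a, b, c$ range over $V$ independently, yields precisely each of \eqref{pabm1}--\eqref{pabm10}. The intertwining identities $\beta L_{\succ/\prec}(x)=L_{\succ/\prec}(\alpha(x))\beta$ and the analogous right-multiplication versions fall out from comparing the $V$-component of $(\alpha+\beta)((x+a)\ll(y+b))$ with $((\alpha+\beta)(x+a))\ll((\alpha+\beta)(y+b))$, i.e.\ from the multiplicativity of $\alpha+\beta$.

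The main obstacle is purely bookkeeping: there are four Hom-pre-alternative axioms and ten bimodule relations, so I would organize the verification as a table indicating which slice of which axiom produces which relation. No genuine algebraic difficulty arises, since each slice is multilinear in exactly one of $a, b, c$ and the identification with the $\textup{pabm}$-relations is forced term by term. Apart from this, one should take care that the two occurrences of $L=L_\prec+L_\succ$ and $R=R_\prec+R_\succ$ on the right-hand sides of \eqref{pabm1}--\eqref{pabm10} assemble correctly from separate $\prec$- and $\succ$-contributions coming from different monomials $(x\ll y)$, $(x\gg y)$ in the expansion.
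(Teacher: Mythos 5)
Your overall strategy is the right one, and it is exactly the method the paper itself uses for the analogous semidirect-product statements (the paper in fact gives no proof of this particular proposition; compare the proof of Proposition \ref{semidirectproduct hompreMalcev}, which expands the defining identities on $A\oplus V$ and reads off the equivalence componentwise). Expanding each of the four Hom-pre-alternative axioms for $(\ll,\gg)$ on $x+a$, $y+b$, $z+c$, noting that all terms with two or more $V$-entries vanish, and matching the $A$-slice with the axioms for $(A,\prec,\succ,\alpha)$ and the three $V$-linear slices with \eqref{pabm1}--\eqref{pabm10} is precisely the intended bookkeeping, and your remark that twelve slices collapse to ten independent relations is the correct accounting. (You also silently correct the typo $R_\succ(x)a$ to $R_\succ(y)a$ in the definition of $\gg$, which is necessary for anything to work.)

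The one point that needs repair is your parenthetical claim that the intertwining identities $\beta L_{\succ}(x)=L_{\succ}(\alpha(x))\beta$, $\beta R_{\prec}(x)=R_{\prec}(\alpha(x))\beta$, etc.\ ``follow from the explicit identities by a standard argument.'' They do not follow from \eqref{pabm1}--\eqref{pabm10}: those ten relations are quadratic/cubic compatibility conditions and place no constraint forcing $\beta$ to intertwine a single $L$ or $R$ with $\alpha$. These conditions are exactly the $V$-component of the multiplicativity of $\alpha+\beta$ with respect to $\ll$ and $\gg$, so for the stated ``if and only if'' (with the paper's standing multiplicativity convention) they must be taken as part of the bimodule data, just as the paper does explicitly for its other bimodule notions (e.g.\ \eqref{rep1} and \eqref{rephommalcev}); Definition \ref{def:bimodhomprealt} simply omits them. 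So keep your first reading (``implicit in the bimodule data'') and drop the claim of derivability; otherwise the ``if'' direction of your argument, which extracts these identities from multiplicativity of $\alpha+\beta$, would produce conditions not recoverable from the bimodule axioms you start from in the ``only if'' direction.
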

\begin{defn}
Let $(A,\prec,\succ,\alpha)$ be a Hom-pre-alternative algebra. Then a linear map $T: V \to A$ is called an $\mathcal{O}$-operator of  $(A,\prec,\succ,\alpha)$ associated to a bimodule   $(V,L_{\prec},R_{\prec},L_{\succ},R_{\succ},\beta)$  if $T$ satisfies, for all $a, b \in V$,
 \begin{align}\label{Oophomprealt}
\begin{split}
T \circ\beta&=\alpha\circ T,\\
 T(a) \succ T(b)&=T\big(L_{\succ}(T(a))b+R_{\succ}(T(b))a\big),\\
  T(a) \prec T(b)&=T\big(L_{\prec}(T(a))b+R_{\prec}(T(b))a\big).
\end{split}
\end{align}
\end{defn}
\begin{prop}\label{last prop}
Let $(A,\prec,\succ,\alpha)$ be some Hom-pre-alternative algebra, and let $T$ be  an $\mathcal{O}$-operator of  $(A,\prec,\succ,\alpha)$ associated to a bimodule $(V,L_{\prec},R_{\prec},L_{\succ},R_{\succ},\beta)$.  With products defined, for any $a,b \in V$, by
\begin{align}
\begin{split}
 a\se b &=L_{\succ}(T(a))b,\ a \ne b=R_{\succ}(T(b))a, \\
 a \sw b &= L_{\prec}(T(a))b,\ a \nw b=R_{\prec}(T(b))a,
 \end{split}
\end{align}
$(V,\se,\ne,\sw,\nw,\beta)$ is a Hom-alternative quadri-algebra.

\end{prop}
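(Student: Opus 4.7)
The plan is to verify each of the nine defining identities of a Hom-alternative quadri-algebra (Definition \ref{def:altquad}) for the four new products on $V$. The main mechanism is that the $\mathcal{O}$-operator identity \eqref{Oophomprealt} allows every occurrence of $T$ of a product to be translated into a combination of the bimodule operators $L_\succ,R_\succ,L_\prec,R_\prec$. After this translation each associator identity on $V$ reduces to one of the ten bimodule identities \eqref{pabm1}--\eqref{pabm10} evaluated at the images of $T$.

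First I would check that $\beta$ is multiplicative with respect to each of $\se,\ne,\sw,\nw$. This uses the intertwinings built into a Hom-pre-alternative bimodule together with $T\circ\beta=\alpha\circ T$; for instance,
\begin{equation*}
\beta(a\se b)=\beta L_\succ(T(a))b=L_\succ(\alpha T(a))\beta(b)=L_\succ(T(\beta(a)))\beta(b)=\beta(a)\se\beta(b),
\end{equation*}
and the three remaining cases are analogous. Next I would record the useful consequences of \eqref{Oophomprealt}, namely
\begin{equation*}
T(a\ast b)=T(a)\ast T(b),\qquad T(a\wedge b)=T(a)\wedge T(b),\qquad T(a\vee b)=T(a)\vee T(b),
\end{equation*}
obtained by adding the two identities in \eqref{Oophomprealt} in the appropriate combinations.

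Then I would unfold a representative pair of $\beta$-associators. For example,
\begin{align*}
\las a,b,c\ras^r_\beta &= R_\prec(\alpha T(c))R_\prec(T(b))a - R_\prec(T(b)\ast T(c))\beta(a),\\
\las b,a,c\ras^m_\beta &= R_\prec(\alpha T(c))L_\succ(T(b))a - L_\succ(\alpha T(b))R_\prec(T(c))a.
\end{align*}
Adding and factoring on the left, the asserted vanishing of $\las a,b,c\ras^r_\beta+\las b,a,c\ras^m_\beta$ becomes exactly the bimodule identity
\begin{equation*}
R_\prec(\alpha(y))\bigl(L_\succ(x)+R_\prec(x)\bigr)=L_\succ(\alpha(x))R_\prec(y)+R_\prec(x\ast y)\beta
\end{equation*}
applied at $x=T(b),y=T(c)$, which is \eqref{pabm1}--\eqref{pabm10}. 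The remaining eight quadri-algebra axioms are verified by exactly the same procedure: each one splits into a sum of two triple compositions of bimodule operators that, after replacing middle products via \eqref{Oophomprealt}, collapses onto one of the ten identities defining a Hom-pre-alternative bimodule.

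The main obstacle is purely combinatorial bookkeeping: correctly pairing each of the nine associator identities from Definition \ref{def:altquad} with the appropriate bimodule relation among \eqref{pabm1}--\eqref{pabm10}, and keeping track of which of $\succ,\prec,\wedge,\vee,\ast$ appears in each middle argument (so that the right version of the $\mathcal{O}$-operator identity is invoked). No new algebraic content enters; once the matching table is fixed, the verification is a systematic term-by-term unpacking. The conclusion then follows directly, giving the Hom-alternative quadri-algebra structure $(V,\se,\ne,\sw,\nw,\beta)$.
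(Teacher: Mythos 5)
Your proposal is correct and follows essentially the same route as the paper: expand each $\beta$-associator on $V$ into compositions of $L_{\succ},R_{\succ},L_{\prec},R_{\prec}$, use the $\mathcal{O}$-operator identities \eqref{Oophomprealt} to rewrite $T$ applied to $\ast$, $\prec$, $\succ$, and reduce each quadri-algebra axiom to one of the bimodule relations \eqref{pabm1}--\eqref{pabm10}. (One small slip: the claimed identities $T(a\wedge b)=T(a)\wedge T(b)$ and $T(a\vee b)=T(a)\vee T(b)$ do not typecheck, since $\wedge$ and $\vee$ are not operations on the Hom-pre-alternative algebra $A$ --- but they are also never needed, because $\wedge$ and $\vee$ only occur in argument positions where $T$ is not applied to the compound product.)
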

\begin{proof}
Set $L=L_{\prec}+L_{\succ}$ and $R=R_{\prec}+R_{\succ}$.  For any $a,b,c \in V$,
\begin{align*}
    &(a \ast b)\se \beta(c)-\beta(a) \se (b \se c) \\
    &\quad =(L(T(a))b+R(T(b))a)\se \beta(c)-\beta(a)\se (L_{\succ}(T(b))c) \\
    &\quad =L_{\succ}(T(L(T(a))b+R(T(b))a))\beta(c)-L_{\succ}(\alpha(T(a)))L_{\succ}(T(b))c \\
    &\quad =L_{\succ}(T(a)\ast T(b)) \beta(c)-L_{\succ}(\alpha(T(a)))L_{\succ}(T(b))c=0.
\\
& (a \sw b) \nw \beta(c)-\phi(a) \sw (b \nw c+ b \ne c) \\
& \quad =R_{\prec}(\alpha(T(c)))L_{\prec}(T(a))b-L_{\prec}(\alpha(T(a)))(R_{\prec}(T(c))b+R_{\succ}(T(c))b)\\
& \quad =R_{\prec}(\alpha(T(c)))L_{\prec}(T(a))b-L_{\prec}(\alpha(T(a)))R(T(c))b, \\
& (a \nw c+a \sw c ) \sw \beta(b)-\beta(a) \sw (c \se b+ c \sw b)\\
& \quad= L_{\prec}(T(R_{\prec}(T(c))a+L_{\prec}(T(a))c))\beta(b)- L_{\prec}(\alpha(T(a)))(L_{\succ}(T(c))b+L_{\prec}(T(c))b) \\
& \quad=L_{\prec}(T(a) \prec T(c))\beta(b)-L_{\prec}(\alpha(T(a)))(L(T(c))b.
\end{align*}
This means that
\begin{align*}
  \las a,b,c \ras^{w}_{\beta}+\las a,c,b \ras^{sw}_{\beta} =&
  R_{\prec}(\alpha(T(c)))L_{\prec}(T(a))b-L_{\prec}(\alpha(T(a)))R(T(c))b\\
 &+L_{\prec}(T(a) \prec T(c))\beta(b)-L_{\prec}(\alpha(T(a)))L(T(c))b=0,
\end{align*}
since $(V,L_{\prec},R_{\prec},L_{\succ},R_{\succ},\beta)$ is a bimodule of $(A,\prec,\succ,\alpha)$.
The rest of identities can be proved using  analogous  computations.\end{proof}

Analogously to what happens for quadri-algebras \cite{AL}, Rota-Baxter operators allow different constructions
for alternative quadri-algebras which is a particular case of $\mathcal{O}$-operator.
\begin{defn}
Let $(A, \prec , \succ , \alpha)$ be a Hom-pre-alternative
algebra. A Rota-Baxter operator of weight $0$ on $A$ is a linear map $\mathcal{R}:
A\rightarrow A$ such that $\mathcal{R}\alpha =\alpha  \mathcal{R}$ and the following conditions are satisfied, for all $x, y\in A$,
\begin{eqnarray}
&& \mathcal{R}(x)\succ \mathcal{R}(y)=\mathcal{R}(x\succ \mathcal{R}(y)+ \mathcal{R}(x)\succ y),  \label{baxter1}\\
&& \mathcal{R}(x)\prec \mathcal{R}(y)=\mathcal{R}(x\prec \mathcal{R}(y)+ \mathcal{R}(x)\prec y).  \label{baxter2}
\end{eqnarray}
\end{defn}
\begin{cor} \label{operation}
Let $(A, \prec , \succ , \alpha)$ be a Hom-pre-alternative
algebra and $\mathcal{R}: A\rightarrow A$ be a Rota-Baxter operator of weight $0$ for $A$.
Then $(A,\nearrow, \searrow, \swarrow, \nwarrow,\alpha)$ is a Hom-alternative quadri-algebra with the operations
    \begin{eqnarray*}
        x \nearrow y = x \succ \mathcal{R}(y),
        \qquad \qquad
        x \searrow y = \mathcal{R}(x) \succ y,
        \\
        x \swarrow y = \mathcal{R}(x) \prec y,
        \qquad \qquad
        x \nwarrow y = x \prec \mathcal{R}(y).
\end{eqnarray*}
\end{cor}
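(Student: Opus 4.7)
The plan is to deduce this corollary directly from Proposition \ref{last prop} by recognizing $\mathcal{R}$ as an $\mathcal{O}$-operator of $(A,\prec,\succ,\alpha)$ associated to the regular (adjoint) bimodule of $(A,\prec,\succ,\alpha)$ on itself.

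First I would verify that $(A, L_\succ, R_\succ, L_\prec, R_\prec, \alpha)$ is a bimodule of $(A,\prec,\succ,\alpha)$ in the sense of Definition \ref{def:bimodhomprealt}, where $L_\star(x)(y) := x \star y$ and $R_\star(y)(x) := x \star y$ for $\star \in \{\prec,\succ\}$. The ten bimodule axioms \eqref{pabm1}--\eqref{pabm10} become, after substituting these operator definitions and evaluating on an arbitrary element, precisely the four defining identities of the Hom-pre-alternative algebra $(A,\prec,\succ,\alpha)$ (each of the four axioms giving rise to two or three of the ten bimodule relations depending on the position, left or right, in which the evaluated vector sits). This step is a direct but somewhat tedious bookkeeping translation, conceptually analogous to the classical statement that every Hom-pre-alternative algebra has a natural bimodule structure on itself.

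Next I would check that $\mathcal{R}$ is an $\mathcal{O}$-operator of $(A,\prec,\succ,\alpha)$ associated to this regular bimodule, with $T = \mathcal{R}$ and $\beta = \alpha$. The commutation condition $T \circ \beta = \alpha \circ T$ in \eqref{Oophomprealt} is exactly the hypothesis $\mathcal{R}\alpha = \alpha\mathcal{R}$, while the two multiplicative conditions become
\begin{align*}
\mathcal{R}(x)\succ \mathcal{R}(y) &= \mathcal{R}\bigl(L_\succ(\mathcal{R}(x))y + R_\succ(\mathcal{R}(y))x\bigr) = \mathcal{R}\bigl(\mathcal{R}(x)\succ y + x\succ \mathcal{R}(y)\bigr), \\
\mathcal{R}(x)\prec \mathcal{R}(y) &= \mathcal{R}\bigl(L_\prec(\mathcal{R}(x))y + R_\prec(\mathcal{R}(y))x\bigr) = \mathcal{R}\bigl(\mathcal{R}(x)\prec y + x\prec \mathcal{R}(y)\bigr),
\end{align*}
which are exactly \eqref{baxter1} and \eqref{baxter2} defining a Rota-Baxter operator of weight $0$ on $(A,\prec,\succ,\alpha)$.

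Finally, applying Proposition \ref{last prop} yields a Hom-alternative quadri-algebra structure on $V = A$ with operations $x \searrow y = L_\succ(\mathcal{R}(x))y = \mathcal{R}(x)\succ y$, $x \nearrow y = R_\succ(\mathcal{R}(y))x = x \succ \mathcal{R}(y)$, $x \swarrow y = L_\prec(\mathcal{R}(x))y = \mathcal{R}(x)\prec y$, and $x \nwarrow y = R_\prec(\mathcal{R}(y))x = x \prec \mathcal{R}(y)$, which coincide with those in the statement. The only real obstacle is the first step, the verification of the regular bimodule, and this is purely a translation exercise; the rest of the argument is a direct invocation of the already-established Proposition \ref{last prop}, so no additional identity-chasing at the level of the nine Hom-alternative quadri-algebra axioms is needed.
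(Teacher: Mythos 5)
Your proposal is correct and matches the paper's (implicit) argument: the corollary is stated as an immediate specialization of Proposition \ref{last prop} to the regular bimodule $(A,L_\succ,R_\succ,L_\prec,R_\prec,\alpha)$ with $T=\mathcal{R}$ and $\beta=\alpha$, under which the $\mathcal{O}$-operator conditions \eqref{Oophomprealt} reduce exactly to \eqref{baxter1}--\eqref{baxter2} and the four induced operations coincide with those in the statement. Your identification of the four products and of the Rota-Baxter identities is accurate, so no further comment is needed.
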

\begin{thm} Assume hypothesis of  Corollary \ref{homalt==>homprealt} and
let  $(V,L_{\prec},R_{\prec},L_{\succ},R_{\succ},\beta)$  be a bimodule and $T$ be an $\mathcal{O}$-operator of  $(A,\prec,\succ,\alpha)$ associated to  $(V,L_{\prec},R_{\prec},L_{\succ},R_{\succ},\beta)$. Then   $(V,L_{\succ}-R_{\prec},R_{\succ}-L_{\prec},\beta)$ is a bimodule of the Hom-pre-Malcev algebra $(A,\cdot,\alpha)$,   and $T$ is an $\mathcal{O}$-operator of $(A,\cdot,\alpha)$ with respect to $(V,L_{\succ}-R_{\prec},R_{\succ}-L_{\prec},\beta)$.  Moreover, if $(V,\blacktriangleright,\blacktriangleleft,\beta)$ is the  Hom-M-dendriform  algebra associated to the  Hom-pre-Malcev algebra $(A,\cdot,\alpha)$ on the bimodule $(V,L_{\succ}-R_{\prec},R_{\succ}-L_{\prec},\beta)$, then
 \[     a\blacktriangleright b = a \nearrow b - b \swarrow a,
 \qquad
        a  \blacktriangleleft  b = a \searrow b - b \nwarrow a.
    \]
\end{thm}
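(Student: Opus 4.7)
The plan is to reduce all three claims to the semi-direct-product machinery already established and the chain of functors in diagram \eqref{Diagramme1}; no new identity has to be checked from scratch.

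For the bimodule claim, I would first apply Proposition \ref{directsumhomprealt} to form the semi-direct-product Hom-pre-alternative algebra $(A\oplus V,\ll,\gg,\alpha+\beta)$ attached to the bimodule $(V,L_\prec,R_\prec,L_\succ,R_\succ,\beta)$. Passing to its associated horizontal Hom-pre-Malcev structure via the formula $x\cdot y=x\succ y-y\prec x$ of Corollary \ref{homalt==>homprealt}, a quick expansion gives
\[
(x+a)\,\bar\cdot\,(y+b)=(x+a)\gg(y+b)-(y+b)\ll(x+a)
=x\cdot y+(L_\succ-R_\prec)(x)b+(R_\succ-L_\prec)(y)a,
\]
which is precisely the semi-direct-product expression of Proposition \ref{semidirectproduct hompreMalcev} with left action $\ell=L_\succ-R_\prec$ and right action $r=R_\succ-L_\prec$. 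That proposition then yields at once that $(V,L_\succ-R_\prec,R_\succ-L_\prec,\beta)$ is a bimodule of the Hom-pre-Malcev algebra $(A,\cdot,\alpha)$.

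For the $\mathcal{O}$-operator claim, the equivariance $T\circ\beta=\alpha\circ T$ is inherited from \eqref{Oophomprealt}, and a direct substitution of the two halves of \eqref{Oophomprealt} gives
\[
T(a)\cdot T(b)=T(a)\succ T(b)-T(b)\prec T(a)
=T\bigl((L_\succ-R_\prec)(T(a))b+(R_\succ-L_\prec)(T(b))a\bigr),
\]
which is exactly Definition \ref{o-ophpm} for the bimodule just produced.

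For the explicit formulas of $\blacktriangleright,\blacktriangleleft$, I would plug the bimodule $(L_\succ-R_\prec,R_\succ-L_\prec,\beta)$ into the Hom-M-dendriform construction \eqref{hpm==>dend}, obtaining $a\blacktriangleright b=(R_\succ-L_\prec)(T(b))a$ and $a\blacktriangleleft b=(L_\succ-R_\prec)(T(a))b$. Then I identify each summand with a Hom-alternative quadri-algebra operation using Proposition \ref{last prop}: $R_\succ(T(b))a=a\nearrow b$, $L_\succ(T(a))b=a\searrow b$, while the two remaining pieces match $L_\prec(T(b))a=b\swarrow a$ and $R_\prec(T(a))b=b\nwarrow a$ after swapping the arguments $a\leftrightarrow b$ in the defining identities of Proposition \ref{last prop}. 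The only real subtlety in the whole argument is exactly this argument-swap bookkeeping, together with routing the sign convention of $x\cdot y=x\succ y-y\prec x$ to the correct slot of each bimodule action; everything else is formal substitution into results already proved in Sections \ref{sec:Hom-pre-Malcev  algebras}--\ref{sec:Hom-M-dendriform algebras}.
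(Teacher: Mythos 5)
Your proposal is correct and follows essentially the same route as the paper's own proof: form the semi-direct product Hom-pre-alternative algebra via Proposition \ref{directsumhomprealt}, pass to its associated Hom-pre-Malcev structure to read off the actions $\ell=L_\succ-R_\prec$, $r=R_\succ-L_\prec$ from Proposition \ref{semidirectproduct hompreMalcev}, verify the $\mathcal{O}$-operator identity by direct substitution of \eqref{Oophomprealt}, and match the resulting $\blacktriangleright,\blacktriangleleft$ with the quadri-algebra operations of Proposition \ref{last prop}. The argument-swap bookkeeping you flag is handled identically in the paper, so nothing further is needed.
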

\begin{proof}
By Proposition \ref{directsumhomprealt}, $A\ltimes V$ is a Hom-pre-alternative algebra. Consider its associated Hom-pre-Malcev algebra $(A \oplus V, \bullet, \alpha+\beta)$,
\begin{align*}
& (x+a)\bullet (y+b) =(x+a)\gg (y+b) - (y+b)\ll (x+a) \\
&= x \succ y+ L_{\succ}(x)b+ R_{\succ}(y)a - y \prec x - L_{\prec}(y)a - R_{\prec}(x)b \\
&= x\cdot y + (L_{\succ}-R_{\prec})(x)b + ( R_{\succ}-L_{\prec})(y)a.
\end{align*}
By Proposition \ref{semidirectproduct hompreMalcev}, $(V,L_{\succ}-R_{\prec},R_{\succ}-L_{\prec},\beta)$ is a representation of Hom-pre-Malcev algebra $(A,\cdot,\alpha)$.
Moreover,\ $T$ is an $\mathcal{O}$-operator of $(A,\cdot,\alpha)$ with respect to $(V,L_{\succ}-R_{\prec},R_{\succ}-L_{\prec},\beta)$. In fact,
\begin{align*}
T(a)\cdot T(b)=&T(a)\succ T(b)-T(b)\prec T(a)\\
=&T \big(L_{\succ}(T (a))b + R_{\succ}(T (b))a\big)-T \big(L_{\prec}(T (b))a + R_{\prec}(T (a))b\big)\\
=&T \big((L_{\succ}-R_{\prec})(T (a))b + (R_{\succ}-L_{\prec})(T (b))a\big).
\end{align*}
Moreover, by \eqref{hpm==>dend} and \eqref{Oophomprealt},
\begin{align*}
&a\blacktriangleright b=(R_{\succ}-L_{\prec})(T(b))a=R_{\succ}(T(b))a-L_{\prec}(T(b))a=a\nearrow b - b \swarrow a,\\
&a \blacktriangleleft b =(L_{\succ}-R_{\prec})(T(a))b= L_{\succ}(T(a))b-R_{\prec}(T(a))b=a \searrow b - b \nwarrow a.
\qedhere \end{align*}
\end{proof}
\begin{cor}
Assume hypothesis of  Corollary \ref{homalt==>homprealt}, and let $\mathcal{R}$ be a Rota-Baxter operator of  $(A,\prec,\succ,\alpha)$ and $(A,\blacktriangleright,\blacktriangleleft,\alpha)$ be the  Hom-M-dendriform  algebra associated to the  Hom-pre-Malcev algebra $(A,\cdot,\alpha)$ given in Corollary \ref{HPM==>HMD by rota-baxter}. Then, the operations
 \[     x\blacktriangleright y = x \nearrow y - y \swarrow x,
 \qquad
        x  \blacktriangleleft  y = x \searrow y - y \nwarrow x,
    \]
 define a Hom-M-dendriform structure in $A$ with respect the twisting map $\alpha$.
\end{cor}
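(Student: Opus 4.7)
My plan is to reduce the statement to Corollary \ref{HPM==>HMD by rota-baxter} by exhibiting $\mathcal{R}$ as a Rota-Baxter operator of weight zero on the associated horizontal Hom-pre-Malcev algebra $(A,\cdot,\alpha)$, and then identify the resulting Hom-M-dendriform operations with the claimed expressions in terms of $\nearrow,\searrow,\swarrow,\nwarrow$.

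First, I would verify that $\mathcal{R}$ is a Rota-Baxter operator on $(A,\cdot,\alpha)$. Since $x\cdot y=x\succ y-y\prec x$ by Corollary \ref{homalt==>homprealt}, and $\mathcal{R}$ is Rota-Baxter for both $\succ$ and $\prec$ (identities \eqref{baxter1}--\eqref{baxter2}), a direct computation gives
\begin{align*}
\mathcal{R}(x)\cdot\mathcal{R}(y)&=\mathcal{R}(x)\succ\mathcal{R}(y)-\mathcal{R}(y)\prec\mathcal{R}(x)\\
&=\mathcal{R}\bigl(\mathcal{R}(x)\succ y+x\succ \mathcal{R}(y)\bigr)-\mathcal{R}\bigl(\mathcal{R}(y)\prec x+y\prec\mathcal{R}(x)\bigr)\\
&=\mathcal{R}\bigl(x\cdot\mathcal{R}(y)+\mathcal{R}(x)\cdot y\bigr),
\end{align*}
after regrouping, together with the compatibility $\mathcal{R}\alpha=\alpha\mathcal{R}$ inherited from $\mathcal{R}$ on $(A,\prec,\succ,\alpha)$.

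Next, I would apply Corollary \ref{HPM==>HMD by rota-baxter} to the Hom-pre-Malcev algebra $(A,\cdot,\alpha)$ with the Rota-Baxter operator $\mathcal{R}$, obtaining a Hom-M-dendriform structure on $A$ given by $x\blacktriangleright y=x\cdot\mathcal{R}(y)$ and $x\blacktriangleleft y=\mathcal{R}(x)\cdot y$. It then remains to expand these products using the definition of $\cdot$ and the operations from Corollary \ref{operation}:
\begin{align*}
x\blacktriangleright y&=x\cdot\mathcal{R}(y)=x\succ\mathcal{R}(y)-\mathcal{R}(y)\prec x=x\nearrow y-y\swarrow x,\\
x\blacktriangleleft y&=\mathcal{R}(x)\cdot y=\mathcal{R}(x)\succ y-y\prec\mathcal{R}(x)=x\searrow y-y\nwarrow x,
\end{align*}
which is precisely the claimed formula.

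The only non-routine step is the first one, and it is itself a brief algebraic manipulation; alternatively, one could recognize $\mathcal{R}$ as an $\mathcal{O}$-operator of $(A,\prec,\succ,\alpha)$ with respect to the regular bimodule $(A,L_\prec,R_\prec,L_\succ,R_\succ,\alpha)$ and invoke the preceding theorem directly, which already produces the displayed formulas $a\blacktriangleright b=a\nearrow b-b\swarrow a$ and $a\blacktriangleleft b=a\searrow b-b\nwarrow a$. Either route gives the result with no real obstacle; the main content of the corollary is the clean identification of the two Rota-Baxter constructions.
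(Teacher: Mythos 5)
Your proposal is correct; the paper gives no explicit proof for this corollary, treating it as the specialization of the immediately preceding theorem to the regular bimodule $(A,L_\prec,R_\prec,L_\succ,R_\succ,\alpha)$ with $T=\mathcal{R}$, which is exactly the alternative route you mention at the end. Your primary route — checking directly that $\mathcal{R}$ is a weight-zero Rota-Baxter operator for $\cdot=\succ-\prec^{\mathrm{op}}$ and then invoking Corollary \ref{HPM==>HMD by rota-baxter} together with the formulas of Corollary \ref{operation} — is an equally valid and correctly executed variant of the same idea.
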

\noindent
Combining it with diagram \eqref{Diagramme1}, yields the following detailed commutative diagram:

\begin{equation}\label{diagramhommalcev}
    \begin{split}
\resizebox{14cm}{!}{\xymatrix{
\ar[rr] \mbox{\bf Hom-alt quadri-alg $(A,\nwarrow, \swarrow, \nearrow, \searrow,\alpha)$ }\ar[d]_{\mbox{$\begin{array}{l} \succ=\nearrow +\searrow \\ \prec=\nwarrow+ \swarrow\end{array}$}}\ar[rr]^{\mbox{ \quad$\blacktriangleright= \nearrow -\swarrow$}}_{\mbox{ \quad$\blacktriangleleft=\searrow -\nwarrow$}}
                && \mbox{\bf  Hom-M-dendri alg  $(A,\blacktriangleright,\blacktriangleleft,\alpha)$ }\ar[d]_{\mbox{$\cdot=\blacktriangleright+\blacktriangleleft$}}\\
\ar[rr] \mbox{\bf Hom-pre-alt alg $(A,\prec,\succ,\alpha)$}\ar@<-1ex>[u]_{\mbox{R-B }}\ar[d]_{\mbox{ $\star=\prec+\succ$}}\ar[rr]^{\mbox{\quad\quad $\cdot=\prec-\succ$\quad\quad  }}
                && \mbox{\bf Hom-pre-Malcev alg  $(A,\cdot,\alpha)$}\ar@<-1ex>[u]_{\mbox{R-B}}\ar[d]_{\mbox{Commutator}}\\
          \ar[rr] \mbox{\bf Hom-alt alg $(A,\star,\alpha)$}\ar@<-1ex>[u]_{ \mbox{R-B}}\ar[rr]^{\mbox{Commutator}}
                && \mbox{\bf Hom-Malcev alg  $(A,[-,-],\alpha)$}\ar@<-1ex>[u]_{\mbox{R-B}}}
}
 \end{split}
\end{equation}

\section{Twistings}\label{sec:twistings}
In \cite{Yau}, D. Yau shows that one can construct a Hom-Malcev algebra starting
from a Malcev algebra and an algebra morphism. Therefore using this construction, one can construct Hom-pre-Malcev algebras, Hom-M-Dendriform algebras and their representations.

\begin{thm}
Let $(A, [-, -])$ be a Malcev algebra and $\alpha: A \longrightarrow A$ be an algebra morphism. Then
the Hom-algebra $A_{\alpha} = (A, [-, -]_{\alpha} = \alpha\circ[-, -], \alpha)$ induced by $\alpha$ is a Hom-Malcev algebra.
Moreover, assume that $(A', [-, -]')$ is another Malcev algebra, and $\alpha': A' \to A'$ is a
Malcev algebra morphism. Let $f : A \to A'$ be a Malcev algebra
morphism satisfying $f \circ \alpha = \alpha'\circ f$. Then $f : A_{\alpha} \to A'_{\alpha'}$
 is a Hom-Malcev algebra morphism.
\end{thm}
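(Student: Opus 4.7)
The plan is to show directly that $[-,-]_\alpha=\alpha\circ[-,-]$ satisfies antisymmetry, multiplicativity, and the Hom-Malcev identity \eqref{Hom-Malcev:Jacobiannotation}, after which the morphism statement is a one-line check. Antisymmetry of $[-,-]_\alpha$ follows from antisymmetry of $[-,-]$ and linearity of $\alpha$; multiplicativity $\alpha\circ[-,-]_\alpha=[-,-]_\alpha\circ\alpha^{\otimes 2}$ follows because $\alpha$ is already an algebra morphism for $[-,-]$.

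The key computation is to reduce the twisted Jacobian $J_\alpha$ to the classical Jacobian $J$ scaled by a power of $\alpha$. First I would compute, for any $x,y,z\in A$,
\begin{align*}
[[x,y]_\alpha,\alpha(z)]_\alpha &= \alpha\bigl([\alpha([x,y]),\alpha(z)]\bigr)=\alpha^2\bigl([[x,y],z]\bigr),
\end{align*}
using that $\alpha$ is a morphism of the Malcev algebra $(A,[-,-])$. Summing the three cyclic terms in $J_\alpha$ gives $J_\alpha(x,y,z)=\alpha^2(J(x,y,z))$.

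With this identity in hand, both sides of \eqref{Hom-Malcev:Jacobiannotation} become evaluations of the classical Malcev identity. For the left-hand side,
\begin{align*}
J_\alpha(\alpha(x),\alpha(y),[x,z]_\alpha)
&=\alpha^2\bigl(J(\alpha(x),\alpha(y),\alpha([x,z]))\bigr)
=\alpha^3\bigl(J(x,y,[x,z])\bigr),
\end{align*}
where the second equality uses that $J$ is trilinear and $\alpha$ is a morphism, so $J(\alpha x,\alpha y,\alpha w)=\alpha(J(x,y,w))$. For the right-hand side,
\begin{align*}
[J_\alpha(x,y,z),\alpha^2(x)]_\alpha
&=\alpha\bigl([\alpha^2(J(x,y,z)),\alpha^2(x)]\bigr)
=\alpha^3\bigl([J(x,y,z),x]\bigr).
\end{align*}
Both expressions now coincide by the classical Malcev identity $J(x,y,[x,z])=[J(x,y,z),x]$ in $(A,[-,-])$, applied inside $\alpha^3$.

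The main (and only) subtlety is tracking the powers of $\alpha$ correctly and making sure the use of the morphism property is legitimate at each step; there is no genuine obstacle beyond bookkeeping. For the second assertion, $f\circ\alpha=\alpha'\circ f$ is given, so it remains to check that $f$ preserves the twisted brackets: $f([x,y]_\alpha)=f(\alpha([x,y]))=\alpha'(f([x,y]))=\alpha'([f(x),f(y)]')=[f(x),f(y)]'_{\alpha'}$, where the penultimate equality uses that $f$ is a Malcev morphism. This completes the proposal.
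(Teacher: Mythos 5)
Your proof is correct and follows exactly the standard Yau-twisting argument: the paper itself states this theorem without proof (citing Yau), and the intended argument is precisely your reduction $J_\alpha(x,y,z)=\alpha^2(J(x,y,z))$ together with the morphism property, which brings both sides of \eqref{Hom-Malcev:Jacobiannotation} to $\alpha^3$ applied to the classical Malcev identity. The power-of-$\alpha$ bookkeeping and the one-line morphism check are both accurate.
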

The following result gives a procedure to construct representation of Hom-Malcev algebra by
a representation  Malcev algebra,  morphism and linear map.
\begin{prop}
   Let $(A, [-, -])$ be a Malcev algebra, $\alpha: A\to A$ be a morphism on $A$,  $(V, \rho)$ be a representation of $A$ and $\beta:V\to V$ be a linear map such that
 $\beta \rho(x)(a)=\rho(\alpha(x))\beta(a).$
 Then, $(V, \widetilde{\rho},\beta)$ is a representation of $(A, [-, -]_\alpha,\alpha)$, where
 $$\widetilde{\rho}(x)(a)=\rho(\alpha(x))\beta(a),\quad \forall \ x\in A,\ a\in V.$$
 \end{prop}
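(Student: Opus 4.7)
The goal is to verify the two axioms \eqref{rephommalcev} and \eqref{representation H-M} of a representation of the Hom-Malcev algebra $(A,[-,-]_\alpha,\alpha)$ for the triple $(V,\widetilde{\rho},\beta)$. The first axiom, $\widetilde{\rho}(\alpha(x))\beta=\beta\widetilde{\rho}(x)$, is essentially the hypothesis on $\beta$: using the definition of $\widetilde{\rho}$ and the compatibility $\beta\rho(x)=\rho(\alpha(x))\beta$ once on each side, both $\widetilde{\rho}(\alpha(x))\beta(a)$ and $\beta\widetilde{\rho}(x)(a)$ reduce to $\rho(\alpha^2(x))\beta^2(a)$. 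So the plan is to dispose of the first identity in one line and focus the proof on \eqref{representation H-M}.

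The main step is a book-keeping lemma which I will prove by a short induction on $k$: the compatibility assumption implies $\rho(\alpha^k(x))\beta^k=\beta^k\rho(x)$ for every $k\ge0$. With this at hand I plan to rewrite each of the five terms of the Hom-Malcev representation identity for $\widetilde{\rho}$ on $A_\alpha$. Since $\alpha$ is a Malcev morphism, the twisted bracket satisfies $[[x,y]_\alpha,\alpha(z)]_\alpha=\alpha^2([[x,y],z])$, $[z,x]_\alpha=\alpha([z,x])$ and $\alpha([y,z]_\alpha)=\alpha^2([y,z])$; these give the LHS bracket and the inner brackets on the RHS of \eqref{representation H-M} as $\alpha^{\cdot}$ applied to the ordinary Malcev brackets. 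Combining this with the definition $\widetilde{\rho}(u)=\rho(\alpha(u))\beta$ and the book-keeping lemma, every term acquires an overall common factor $\beta^3$.

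Concretely I expect each side to simplify as follows:
\begin{align*}
\widetilde{\rho}([[x,y]_\alpha,\alpha(z)]_\alpha)\beta^2 &= \beta^3\rho([[x,y],z]),\\
\widetilde{\rho}(\alpha^2(x))\widetilde{\rho}(\alpha(y))\widetilde{\rho}(z) &= \beta^3\rho(x)\rho(y)\rho(z),\\
\widetilde{\rho}(\alpha^2(z))\widetilde{\rho}(\alpha(x))\widetilde{\rho}(y) &= \beta^3\rho(z)\rho(x)\rho(y),\\
\widetilde{\rho}(\alpha^2(y))\widetilde{\rho}([z,x]_\alpha)\beta &= \beta^3\rho(y)\rho([z,x]),\\
\widetilde{\rho}(\alpha([y,z]_\alpha))\widetilde{\rho}(\alpha(x))\beta &= \beta^3\rho([y,z])\rho(x).
\end{align*}
After factoring $\beta^3$, the identity to be checked becomes exactly the untwisted Malcev representation identity for $\rho$ on $(A,[-,-])$, which is given by hypothesis. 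This finishes the verification.

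The only potentially fiddly step is the third one, bookkeeping the powers of $\alpha$ and $\beta$ while commuting them through the factors of $\rho$ in the correct order; the rest is formal. No creative step is required beyond recognizing that Yau's twisting principle transports the classical representation identity through the intertwining relation $\rho(\alpha^k(x))\beta^k=\beta^k\rho(x)$.
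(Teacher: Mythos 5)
Your proposal is correct and follows essentially the same route as the paper: verify the intertwining axiom in one line, then use the morphism property of $\alpha$ and the commutation relation $\rho(\alpha^k(x))\beta^k=\beta^k\rho(x)$ to extract a common factor $\beta^3$ from all five terms, reducing the twisted identity to the classical Malcev representation identity for $\rho$. Your term-by-term simplifications match the paper's computation exactly.
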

 \begin{proof}
 For any $x, y, z \in A$ and $a \in V$, by \eqref{rephommalcev},
 $$\beta \widetilde{\rho}(x)(a)=\beta(\rho(\alpha(x))\beta(a))=\rho(\alpha^{2}(x))\beta^{2}(a)=\widetilde{\rho}(\alpha(x))\beta(a).$$
Combining with \eqref{representation H-M}, we get
\begin{multline*}
\widetilde{\rho}([[x, y]_\alpha, \alpha (z)]_\alpha)(\beta^{2}(a))-  \widetilde{\rho}(\alpha^{2}(x))\widetilde{\rho}(\alpha (y))\widetilde{\rho}(z)(a) +\widetilde{\rho}(\alpha^{2}(z))\widetilde{\rho}(\alpha (x))\widetilde{\rho}(y)(a)\\
+ \widetilde{\rho}(\alpha^{2}(y)) \widetilde{\rho}([x,z]_\alpha)\beta(a) + \widetilde{\rho}(\alpha ([y,z]_\alpha))\widetilde{\rho}(\alpha (x))\beta(a) = \\
 \beta^{3}\Big(\rho([[x, y], z]) - \rho(x)\rho(y)\rho(z) + \rho(z)\rho(x)\rho(y) +\rho(y)\rho([x,z]) +\rho([y, z])\rho(x)\Big)(a) = 0.
 \end{multline*}
Then $(V, \widetilde{\rho},\beta)$ is a representation of $(A, [-, -]_\alpha,\alpha)$ on $V$.
\end{proof}
The following theorem provides a procedure to construct Hom-pre-Malcev algebra by
a pre-Malcev algebra and a morphism.
 \begin{thm}
 Let $\mathcal{A}=(A, \cdot)$ be a pre-Malcev algebra and let $\alpha :A\longrightarrow A$ be a morphism of $\mathcal{A}$. Then $\mathcal{A}_{\alpha} =(A, \cdot_{\alpha}, \alpha)$ is a  Hom-pre-Malcev algebra with $x \cdot_{\alpha} y = \alpha (x \cdot y).$
 Moreover, assume that $\mathcal{A'}=(A', \cdot')$ is another pre-Malcev algebra and $\alpha':A' \to A'$ is a
 pre-Malcev algebra morphism of $\mathcal{A'}$. Let $f : A \to A'$ be a pre-Malcev algebra
morphism satisfying $f \circ \alpha = \alpha'\circ f$. Then $f: A_{\alpha} \to A'_{\alpha'}$
 is a Hom-pre-Malcev algebra morphism.
 \end{thm}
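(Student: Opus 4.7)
The plan is to verify directly that $\mathcal{A}_\alpha=(A,\cdot_\alpha,\alpha)$ satisfies the Hom-pre-Malcev identity \eqref{HPM}, and then dispense with the morphism statement by a one-line calculation. The whole strategy is the standard Yau-twist principle: since $\alpha$ is a morphism of $(A,\cdot)$, every nested monomial in $\cdot_\alpha$ equals an appropriate power of $\alpha$ applied to the same monomial in $\cdot$, so the Hom-identity collapses to an $\alpha$-image of \eqref{PM}.

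The key preliminary observation is that $\alpha$ also commutes with the commutator $[x,y]=x\cdot y-y\cdot x$, hence $[x,y]_\alpha:=x\cdot_\alpha y-y\cdot_\alpha x=\alpha(x\cdot y)-\alpha(y\cdot x)=\alpha([x,y])$. From here I would compute each of the five building blocks of \eqref{HPM} rewritten with $\cdot_\alpha$ and $[-,-]_\alpha$. For instance $\alpha(x\cdot_\alpha t)=\alpha^{2}(x\cdot t)$, $[\alpha(y),\alpha(z)]_\alpha=\alpha^{2}([y,z])$, $[[x,y]_\alpha,\alpha(z)]_\alpha=\alpha^{2}([[x,y],z])$, and more generally any expression of total $\cdot_\alpha$-depth $d$ equals $\alpha^{d}$ applied to the corresponding $\cdot$-expression. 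Performing this reduction on each of the five summands of \eqref{HPM} for $\cdot_\alpha$ yields respectively $\alpha^{3}([y,z]\cdot(x\cdot t))$, $\alpha^{3}([[x,y],z]\cdot t)$, $\alpha^{3}(y\cdot([x,z]\cdot t))$, $-\alpha^{3}(x\cdot(y\cdot(z\cdot t)))$, and $\alpha^{3}(z\cdot(x\cdot(y\cdot t)))$. Factoring out $\alpha^{3}$, the total becomes $\alpha^{3}$ applied to the left-hand side of the pre-Malcev identity \eqref{PM} for $\mathcal{A}$, which vanishes by hypothesis.

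For the morphism part, the condition $f\circ\alpha=\alpha'\circ f$ is given, so only the multiplicativity of $f$ with respect to the twisted products needs checking, and this is immediate: $f(x\cdot_\alpha y)=f(\alpha(x\cdot y))=\alpha'(f(x\cdot y))=\alpha'(f(x)\cdot' f(y))=f(x)\cdot'_{\alpha'}f(y)$, where the two middle equalities use $f\circ\alpha=\alpha'\circ f$ and the assumption that $f$ is a pre-Malcev morphism.

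I do not expect a genuine obstacle; the only risk is bookkeeping of the $\alpha$-powers as they bubble outward through nested $\cdot_\alpha$. Two sanity checks will guide the computation: all five summands must produce the \emph{same} outer power $\alpha^{3}$ (otherwise they cannot combine), and the shape inside the $\alpha^{3}$ must exactly match the five monomials of \eqref{PM}. If one wished to be slicker, one could instead invoke the Yau-twist philosophy at the abstract level by observing that \eqref{HPM} is a multilinear identity whose every monomial has the same total arity and in which $\alpha$-weights are distributed so as to match exactly the twist $x\cdot_\alpha y=\alpha(x\cdot y)$; but writing out the five terms explicitly is quicker and self-contained.
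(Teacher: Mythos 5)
Your proposal is correct and follows essentially the same route as the paper: reduce each of the five monomials of the twisted identity \eqref{HPM} to $\alpha^{3}$ applied to the corresponding monomial of \eqref{PM} (using that $\alpha$ is a morphism, hence also commutes with the commutator), and conclude by factoring out $\alpha^{3}$; the morphism part is the same one-line computation $f(x\cdot_\alpha y)=\alpha'(f(x)\cdot' f(y))=f(x)\cdot'_{\alpha'}f(y)$. The five $\alpha^{3}$-images you list coincide exactly with those displayed in the paper's proof.
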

 \begin{proof}
 We just show that $(A, \cdot_{\alpha}, \alpha)$ satisfies the
identity \eqref{HPM} while $(A, \cdot)$
satisfies the  identity \eqref{PM}. Indeed,
\begin{align*}
\alpha([y,z]_{\alpha})\cdot_{\alpha} \alpha(x \cdot_{\alpha} t) &= \alpha^{3}\big([y,z]\cdot (x\cdot t)\big), \\
[[x,y]_{\alpha},\alpha(z)]_{\alpha} \cdot_{\alpha}\alpha^{2}(t)&= \alpha^{3}([[x,y], z]\cdot t),\\
\alpha^{2}(y) \cdot_{\alpha} ([x,z]_{\alpha}\cdot_{\alpha} \alpha (t)) &= \alpha^{3}(y\cdot ([x,z]\cdot t)), \\
  \alpha^{2}(x) \cdot_{\alpha} (\alpha (y)  \cdot_{\alpha} (z \cdot_{\alpha} t))&=\alpha^{3}(x\cdot (y \cdot (z \cdot t))), \\
\alpha^{2}(z) \cdot_{\alpha} (\alpha (x) \cdot_{\alpha} (y \cdot_{\alpha} t)) &= \alpha^{3}(z\cdot (x\cdot(y\cdot t))).
\end{align*}
The second assertion follows from
\begin{equation*}
f(x\cdot_\alpha y) =f(\alpha(x\cdot y)) = \alpha'(f(x\cdot y))=\alpha'(f(x)\cdot' f(y))= f(x)\cdot_{\alpha'}'f(y).
\qedhere
\end{equation*}
\end{proof}
The following result gives a construction of a bimodule of  Hom-pre-Malcev algebra starting
with a classical one by means of the Yau twist procedure.
\begin{prop}
   Let $(A, \cdot)$ be a pre-Malcev algebra, $\alpha: A\to A$ be a morphism on $A$,  $(V, \ell,  r)$ be a bimodule of $(A, \cdot)$ and $\beta: V\to V$ be a linear map such that
 $$\beta \ell(x)(b)= \ell(\alpha(x))(\beta(b)), \quad  \beta r(x)(b)=r(\alpha(x))(\beta(b)).$$
 Then $(V, \widetilde{\ell},  \widetilde{r},\beta)$ is a bimodule of $(A, \cdot_\alpha,\alpha)$, where
 $$\widetilde{\ell}(x)(b)=\ell(\alpha(x))(\beta(b)), \quad  \widetilde{r}(x)(b)=r(\alpha(x))(\beta(b)).$$
 \end{prop}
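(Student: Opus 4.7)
The plan is to verify that $(V,\widetilde{\ell},\widetilde{r},\beta)$ satisfies each of the five identities \eqref{rep1}--\eqref{rep5} of Definition \ref{representation HPM} with respect to the twisted Hom-pre-Malcev product $\cdot_{\alpha}=\alpha\circ\cdot$ and the twisting map $\alpha$. The compatibility axiom \eqref{rep1} is the easiest: by the hypothesis $\beta\ell(x)=\ell(\alpha(x))\beta$, one gets $\beta\widetilde{\ell}(x)=\beta\ell(\alpha(x))\beta=\ell(\alpha^{2}(x))\beta^{2}=\widetilde{\ell}(\alpha(x))\beta$, and analogously for $\widetilde{r}$. I would dispatch this first as a warm-up.

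For the remaining identities \eqref{rep2}--\eqref{rep5}, my strategy is the following mechanical reduction. Observe that for the twisted product we have $[x,y]_{\alpha}=\alpha([x,y])$ and $x\cdot_{\alpha}y=\alpha(x\cdot y)$, and that $\alpha$ is an algebra morphism of $(A,\cdot)$, so $\alpha(x\cdot y)=\alpha(x)\cdot\alpha(y)$ and $\alpha([x,y])=[\alpha(x),\alpha(y)]$. Using the two intertwining relations $\beta\ell(x)=\ell(\alpha(x))\beta$ and $\beta r(x)=r(\alpha(x))\beta$ repeatedly, every term $\widetilde{\ell}(\alpha^{k}(u))\,\widetilde{\ell}(\alpha^{j}(v))\,\cdots\,\beta^{m}$ appearing in the twisted identities can be rewritten as $\ell(\alpha^{k'}(u))\,\ell(\alpha^{j'}(v))\cdots\beta^{m'}$ for suitable shifted exponents, where all operators are evaluated on elements of the original (untwisted) algebra $(A,\cdot)$ after pulling $\alpha$'s out of the arguments via the morphism property. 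A crucial remark: since the exponent shifts are exactly the same in every summand of a given identity, all five terms of each identity acquire a common prefactor of the form $\ell(\alpha^{p}(-))\cdots\beta^{q}$ applied to an untwisted bimodule identity, which vanishes because $(V,\ell,r)$ is a bimodule of $(A,\cdot)$.

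Concretely, for identity \eqref{rep2} I would expand each term: for instance $\widetilde{r}(\alpha^{2}(x))\widetilde{\rho}(\alpha(y))\widetilde{\rho}(z)=r(\alpha^{3}(x))\beta\,\rho(\alpha^{2}(y))\beta\,\rho(\alpha(z))\beta=r(\alpha^{3}(x))\rho(\alpha^{3}(y))\rho(\alpha^{3}(z))\beta^{3}$, and $\widetilde{r}(\alpha(z)\cdot_{\alpha}(y\cdot_{\alpha}x))\beta^{2}=r(\alpha(\alpha^{2}(z)\cdot(\alpha(y)\cdot\alpha(x))))\beta^{3}=r(\alpha^{3}(z)\cdot(\alpha^{3}(y)\cdot\alpha^{3}(x)))\beta^{3}$ after using the morphism property, and similarly for the other three terms. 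Each term thus becomes the corresponding term of the untwisted identity \eqref{rep2} applied to $\alpha^{3}(x),\alpha^{3}(y),\alpha^{3}(z)$, followed by $\beta^{3}$. The identity for $(V,\ell,r)$ on these arguments gives zero, so the twisted identity holds. Identities \eqref{rep3}, \eqref{rep4} and \eqref{rep5} proceed identically, since they have the same scaling behaviour in the number of operator compositions and in the powers of $\alpha$ and $\beta$.

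The main (and only) obstacle is bookkeeping: ensuring that the exponents of $\alpha$ and $\beta$ track correctly through every composition, and that the arguments of $\ell,r$ land on the same elements in every summand so the common factor can be pulled out. Once that is verified, the result is an immediate consequence of the hypothesis that $(V,\ell,r)$ is a bimodule of $(A,\cdot)$, with no further combinatorial input required. Finally, I would note that this parallels exactly the Yau-twist construction of representations of Hom-Malcev algebras given earlier in the section, and in fact, via Proposition \ref{rephompremalcev==rephommalcev}, one recovers the earlier twisting result for representations of the associated Hom-Malcev algebra as a corollary.
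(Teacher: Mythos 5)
Your proposal is correct and follows essentially the same route as the paper: verify the compatibility relations \eqref{rep1} directly from the intertwining hypotheses, then use the morphism property of $\alpha$ and the relations $\beta\ell(x)=\ell(\alpha(x))\beta$, $\beta r(x)=r(\alpha(x))\beta$ to collapse each of \eqref{rep2}--\eqref{rep5} into the corresponding untwisted bimodule identity (evaluated on $\alpha$-shifted arguments) composed with $\beta^{3}$, which vanishes by hypothesis. The only blemish is a harmless slip in one intermediate expression, where $\widetilde{r}(\alpha(z)\cdot_{\alpha}(y\cdot_{\alpha}x))\beta^{2}$ should pass through $r(\alpha^{2}(\alpha(z)\cdot(\alpha(y)\cdot\alpha(x))))\beta^{3}$ rather than the exponent distribution you wrote, though your final expression $r(\alpha^{3}(z)\cdot(\alpha^{3}(y)\cdot\alpha^{3}(x)))\beta^{3}$ is the correct one.
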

   \begin{proof}
For  $x,y,z\in A, b \in V$,
\begin{align*}
&\beta\widetilde{\ell}(x)(b)=\beta(\ell(\alpha(x))(\beta(b)))=\ell(\alpha^{2}(x))\beta^{2}(b)=\widetilde{\ell}(\alpha(x))\beta(b),\\
&\beta\widetilde{r}(x)(b)=\beta(r(\alpha(x))(\beta(b)))=r(\alpha^{2}(x))\beta^{2}(b)=\widetilde{r}(\alpha(x))\beta(b),
\end{align*}
that is  \eqref{rep1} holds for $\widetilde{\ell}$ and $\widetilde{r}$. Using    \eqref{rep1} and \eqref{rep2}, we get
\begin{multline*}
 \widetilde{r}(\alpha^{2}(x))\widetilde{\rho}(\alpha (y))\widetilde{\rho}(z)(b)- \widetilde{r}(\alpha (z)\cdot_{\alpha}(y\cdot_{\alpha} x))\beta^{2}(b) + \widetilde{\ell}(\alpha^{2}(y))\widetilde{r}(z\cdot_{\alpha} x)\beta(b)\\
+ \widetilde{\ell}(\alpha ([y,z]_{\alpha}))\widetilde{r}(\alpha (x))\beta(b) - \widetilde{\ell}(\alpha^{2}(z))\widetilde{r}(\alpha (x))\widetilde{\rho}(y)(b)=\\
 \beta^{3}\Big(r(x)\rho(y)\rho(z)- r(z\cdot(y\cdot x)) + \ell(y)r(z\cdot x)\beta+ \ell([y,z])r(x)-\ell(z)r(x)\rho(y)\Big)(b) =0.
 \end{multline*}
 Similarly,    \eqref{rep3}-\eqref{rep5} also hold for $(V, \widetilde{\ell},  \widetilde{r},\beta).$
 \end{proof}
 Now, we provide a way to construct Hom-M-dendriform algebras starting
from an M-dendriform algebra and an algebra endomorphism.
\begin{prop}
Let $(A, \blacktriangleright,\blacktriangleleft )$ be an M-dendriform algebra and $\alpha : A \to A$ be an algebra morphism. Then
$(A, \blacktriangleright_{\alpha}, \blacktriangleleft_{\alpha}, \alpha)$ is a Hom-M-dendriform algebra where
$$x \blacktriangleright_{\alpha} y = \alpha(x \blacktriangleright y),  \quad x\blacktriangleleft_{\alpha} y= \alpha(x\blacktriangleleft y).$$
\end{prop}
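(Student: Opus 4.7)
The plan is to verify the four defining identities \eqref{dend1}--\eqref{dend4} of a Hom-M-dendriform algebra for the twisted structure $(A, \blacktriangleright_\alpha, \blacktriangleleft_\alpha, \alpha)$, leveraging the fact that $\alpha$ is a morphism for both $\blacktriangleright$ and $\blacktriangleleft$ and that the underlying M-dendriform identities (the $\alpha = \mathrm{Id}$ case of \eqref{dend1}--\eqref{dend4}) hold in $(A,\blacktriangleright,\blacktriangleleft)$.

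First, I would record the compatibility of $\alpha$ with all derived operations. Since $\alpha(x\blacktriangleright y)=\alpha(x)\blacktriangleright\alpha(y)$ and similarly for $\blacktriangleleft$, the induced products $x\cdot y = x\blacktriangleleft y+x\blacktriangleright y$, $x\diamond y = x\blacktriangleleft y - y\blacktriangleright x$, and $[x,y]=x\cdot y - y\cdot x$ are all preserved by $\alpha$. In particular, for the twisted operations one obtains immediately
\[
x\cdot_\alpha y = \alpha(x\cdot y),\quad x\diamond_\alpha y = \alpha(x\diamond y),\quad [x,y]_\alpha = \alpha([x,y]).
\]

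Next, I would show that each term appearing in the twisted identity \eqref{dend4} equals $\alpha^{3}$ applied to the corresponding untwisted term. For instance,
\[
[[x,y]_\alpha, \alpha(z)]_\alpha \blacktriangleleft_\alpha \alpha^2(t) = \alpha\bigl(\alpha([[x,y],z])\blacktriangleleft \alpha^2(t)\bigr)=\alpha^{3}\bigl([[x,y],z]\blacktriangleleft t\bigr),
\]
and analogously $\alpha^2(x)\blacktriangleleft_\alpha(\alpha(y)\blacktriangleleft_\alpha(z\blacktriangleleft_\alpha t)) = \alpha^{3}(x\blacktriangleleft(y\blacktriangleleft(z\blacktriangleleft t)))$, etc. Summing the five terms of \eqref{dend4} (with signs), one gets $\alpha^{3}$ applied to the left-hand side of the classical M-dendriform identity, which vanishes by assumption. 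The same uniform mechanism handles \eqref{dend1}--\eqref{dend3}: in each term, one $\alpha$ comes from the outermost twisted product, while the remaining $\alpha$'s distribute through the morphism property so that every summand becomes $\alpha^{3}$ of the analogous classical term; this is where the compatibility recorded in the first step is used repeatedly.

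The main obstacle is purely bookkeeping: one must carefully count the powers of $\alpha$ appearing from the $\alpha^{2}$ factors in the outer slots of \eqref{dend1}--\eqref{dend4} and the nested twisted products, and check that all terms in a given identity collapse to the \emph{same} power $\alpha^{3}$ so that $\alpha^{3}$ factors out globally. Once this uniformity is established, the conclusion follows immediately from the classical M-dendriform identities together with the linearity of $\alpha$.
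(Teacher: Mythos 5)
Your proposal is correct and follows essentially the same route as the paper: both reduce each summand of the twisted identities to $\alpha^{3}$ applied to the corresponding classical M-dendriform term (the paper carries this out for \eqref{dend1}, you sketch it for \eqref{dend4}), and then invoke the untwisted identity. Only a small slip in your intermediate display: $[[x,y]_\alpha,\alpha(z)]_\alpha=\alpha^{2}([[x,y],z])$, not $\alpha([[x,y],z])$, so the middle expression should read $\alpha\bigl(\alpha^{2}([[x,y],z])\blacktriangleleft\alpha^{2}(t)\bigr)$; your final value $\alpha^{3}\bigl([[x,y],z]\blacktriangleleft t\bigr)$ is nevertheless the right one.
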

\begin{proof}
We only prove that $(A, \blacktriangleright_{\alpha}, \blacktriangleleft_{\alpha}, \alpha)$ satisfies the first Hom-M-dendriform identity. The other identities for $(A, \blacktriangleright_{\alpha}, \blacktriangleleft_{\alpha}, \alpha)$ being a Hom-M-dendriform algebra can be verified
similarly. In fact, for any $x, y,z,t \in A$,
\begin{multline*}
(\alpha(z)\diamond_{\alpha} (y\diamond_{\alpha}  x))\blacktriangleright_{\alpha}  \alpha^{2}(t)- \alpha^{2}(x)\blacktriangleright_{\alpha}  (\alpha (y)\cdot_{\alpha} (z\cdot_{\alpha}  t))+\alpha^{2}(z)\blacktriangleleft_{\alpha} (\alpha(x)\blacktriangleright_{\alpha} (y\cdot_{\alpha}  t)) \\
\quad\quad\quad + \alpha([y,z]_{\alpha} )\blacktriangleleft_{\alpha}  \alpha(x\blacktriangleright_{\alpha}  t)-\alpha^{2}(y)\blacktriangleleft_{\alpha} ((z\diamond_{\alpha}  x)\blacktriangleright_{\alpha}  \alpha(t))\\
=\alpha^{3}\Big((z\diamond(y\diamond x))\blacktriangleright t-x\blacktriangleright (y\cdot(z\cdot t))+z\blacktriangleleft(x\blacktriangleright(y\cdot t))+[y,z]\blacktriangleleft(x\blacktriangleright t)\Big. \\
\Big. -y\blacktriangleleft((z\diamond x)\blacktriangleright t)\Big) =  0.
\end{multline*}
Hence  $(A, \blacktriangleright_{\alpha}, \blacktriangleleft_{\alpha}, \alpha)$ is a Hom-M-dendriform algebra.
\end{proof}

\end{document}